\newcommand{\RR}{\mathbb{R}}
\newcommand{\CC}{\mathbb{C}}
\newcommand{\NN}{\mathbb{N}}
\newcommand{\HH}{\mathbb{H}}
\newcommand{\Ss}{\mathbb{S}}
\DeclareMathOperator{\upboxdim}{\overline{\mathrm{dim}_B}}
\DeclareMathOperator{\packdim}{\mathrm{dim}_P}
\DeclareMathOperator{\hffdim}{\mathrm{dim}_H}
\newcommand{\boundangle}{\mathscr{G}_{\ge \delta}}
\newcommand{\simpends}{\mathscr{G}_{AS}}
\newcommand{\notboundtri}{\mathscr{G}^c_\Delta}
\newtheorem{claim}{Claim}
\newtheorem{thm}{Theorem}
\newtheorem{prop}{Proposition}[subsection]
\newtheorem{cor}[prop]{Corollary}
\newtheorem{lem}[prop]{Lemma}
\newtheorem*{thm*}{Theorem}
\newtheorem*{prop*}{Proposition}
\theoremstyle{definition}
\newtheorem{defi}[prop]{Definition}
\newtheorem{ex}[prop]{Example}
\theoremstyle{remark}
\newtheorem{rmk}[prop]{Remark}
\begin{document}

\title[Geodesics with bounded self-intersection angle]{On the Hausdorff Dimension of geodesics with bounded self-intersection angle}
\author{Joaquín Lejtreger}
\date{\today}
\address{
Sorbonne Université and Université Paris Cité, CNRS, IMJ-PRG, F-75005 Paris, France.}
\email{lejtreger@imj-prg.fr}

\begin{abstract}
    We expand a result of Birman–Series \cite{birman1985geodesics} by proving that the set of geodesics whose self-intersection angles are bounded from below has Hausdorff dimension zero. In addition, we show that the set of geodesics that do not bound a geodesic triangle also has Hausdorff dimension zero.
\end{abstract}

\maketitle

\section{Introduction}

Throughout this paper, we denote by $S$ a closed hyperbolic surface. A hyperbolic structure on $S$ gives rise to an identification of the universal cover of $S$ with $\HH^2$, and an identification of the fundamental group $\pi_1(S)$ with a subgroup of $PSL_2(\RR)$: the orientation-preserving isometries of $\HH^2$.

Complete oriented geodesics in $\HH^2$ are in one-to-one correspondence with pairs of different points in the visual boundary $\partial\HH^2$. We denote the set of such pairs by $\mathscr{G} = \partial\HH^2 \times \partial \HH^2 \setminus \Delta$. 

Identifying $\partial \HH^2$ with a subset of $\CC$, we can endow $\mathscr{G}$ with a metric by adding the Euclidean distances of the endpoints in $\partial \HH^2$. It is H\"older equivalent to the visual metric on $\partial \HH^2$, and is related with how long geodesics fellow travel on $\HH^2$. For a subset $A$ of $\mathscr{G}$, we let $\hffdim A$ be the Hausdorff dimension with respect to this metric.

In this paper we will study typical geodesics in terms of Hausdorff dimension by considering an approach inspired by Birman-Series \cite{birman1985geodesics}.

\begin{defi}\label{def:tipicas}
   Let $A \subseteq \mathscr{G}$ be $\pi_1(S)$-invariant. We say that $A$ is \textit{typical} if $$\hffdim(\mathscr{G} \setminus A) = 0.$$
\end{defi}

Note that, while Hausdorff dimension of $A$ is a bi-Lipschitz invariant, the property in Definition \ref{def:tipicas} is a H\"older invariant. We will also study the Hausdorff dimension of the image of complements of typical sets in the surface $S$.


If $\alpha, \beta$ are two geodesics that intersect at $p$ we denote the unoriented angle of intersection as $\angle_p(\alpha, \beta) \in [0, \pi/2]$. Given $\alpha \in \mathscr{G}$ we define 
$$a(\alpha) = \inf_{g \in \pi_1(S)}\{\angle_{\alpha \cap g\alpha} (\alpha, g\alpha)\},$$
the smallest self-intersection angle of the projection of $\alpha$ to $S$ (elements $g \in \pi_1(S)$ with $\alpha \cap g\alpha = \emptyset$ are not considered).



We will study the set of geodesics whose self-intersection angles are uniformly bounded below: for $\delta > 0$ we let
\begin{equation}\label{def:bounded-below}
 \boundangle = \left\{\alpha \in \mathscr{G} : a(\alpha) \ge \delta \right\}.
\end{equation}

Observe that this is a $\pi_1(S)$-invariant set. Thus, by an abuse of notation, we will also consider $\boundangle$ as a subset of geodesics in $S$.

\begin{ex}\label{ex:boundangle}
   Let $\gamma$ and $\beta$ be two simple, intersecting geodesics, as in Figure \ref{fig:boundangle}, and let $\tilde{\gamma}$ and $\tilde{\beta}$ be intersecting lifts in the universal cover. Let $\tilde{\alpha}$ be a geodesic such that $\tilde{\alpha}(-\infty) = \tilde{\gamma}(-\infty)$ and $\tilde{\alpha}(+\infty) = \tilde{\beta}(+\infty)$. Then $\tilde{\alpha}$ is a geodesic in $\boundangle$ for some $\delta > 0$, but $\alpha$ has an infinite number of self-intersections.
\end{ex}

We now state the main theorem of the paper concerning this set.

\begin{thm}\label{teo:el-wan}
 For all $\delta > 0$, we have $\hffdim \boundangle = 0$. In particular, the typical geodesic has arbitrarily small self-intersection angles. Moreover, the image of $\boundangle$ as a subset of $S$ has Hausdorff dimension 1.
\end{thm}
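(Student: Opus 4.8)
I would follow the strategy of Birman–Series \cite{birman1985geodesics}, which reduces $\hffdim\boundangle=0$ to a subexponential counting estimate together with an exponential bound on the size of cylinder sets. Fix a compact fundamental domain $P$ for the action of $\pi_1(S)$ on $\HH^2$ and a finite symmetric generating set $g_1,\dots,g_k$ identifying the sides of $P$. A complete oriented geodesic $\tilde\alpha\subset\HH^2$ in general position crosses a bi-infinite sequence of translates of $P$, producing a bi-infinite cutting sequence in the letters $g_i^{\pm 1}$. For a finite word $w$ of length $n$ let $C_w\subset\mathscr{G}$ be the set of geodesics realizing $w$ around a fixed basepoint. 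Two geodesics in the same $C_w$ fellow-travel inside $\HH^2$ for length $\gtrsim n$, so, as the metric on $\mathscr{G}$ is H\"older-equivalent to the visual metric, $\operatorname{diam}C_w\le Ce^{-cn}$ for uniform $C,c>0$. It therefore suffices to prove that $N(n):=\#\{w \text{ of length } n : C_w\cap\boundangle\ne\emptyset\}$ grows subexponentially: if $N(n)=e^{o(n)}$ then for every $s>0$ one has $\mathcal H^{s}_{Ce^{-cn}}(\boundangle)\le N(n)(Ce^{-cn})^{s}\to 0$, whence $\hffdim\boundangle\le s$ for all $s>0$.

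\textbf{The hard step.} The bound $N(n)=e^{o(n)}$ is the crux, and this is where $a(\alpha)\ge\delta$ is used. The geometric input is that two complete geodesics of $\HH^2$ crossing at angle $\ge\delta$ stay within any fixed distance $R$ of each other only for length $\le\ell(\delta,R)$ and afterwards diverge at a uniform exponential rate. Consequently, if a geodesic arc of length $L$ lying on $S$ extends to an element of $\boundangle$, then whenever it returns near a region it has already visited, the returning strand is either disjoint from the earlier strands there or crosses each of them transversally with a bounded, controlled local interaction. Following the Birman–Series reconstruction argument — coding such an arc by the sequence of ``cells'' it traverses together with the side/crossing data of its returns — one shows the arc is determined by an amount of combinatorial data subexponential in $L$, with constants depending only on $S$ and $\delta$, and this translates to $N(n)=e^{o(n)}$. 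I expect the main obstacle to be the uniform control of how many mutually $\ge\delta$-transverse-or-disjoint strands of a single geodesic can accumulate inside a region of bounded diameter; the exponential-divergence lemma for $\ge\delta$-crossings is exactly the replacement for ``simplicity'' in the original argument.

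\textbf{The image in $S$.} The lower bound is immediate: a simple geodesic has no self-intersections, so by convention $a(\alpha)=+\infty\ge\delta$ and $\boundangle$ contains every simple geodesic, in particular a lift of any simple closed geodesic of $S$; its image in $S$ is a topological circle, so the image of $\boundangle$ in $S$ has Hausdorff dimension at least $1$. For the upper bound I would pass to the unit tangent bundle: let $\widehat{\boundangle}\subset T^1S$ consist of the unit vectors whose complete geodesic projects from an element of $\boundangle$; this set is invariant under the geodesic flow $\phi_t$. Fix a $2$-dimensional local transversal $\Sigma$ to the flow; extending a vector of $\Sigma$ to a bi-infinite geodesic and recording its endpoints identifies (a lift of) a neighbourhood in $\Sigma$ bi-Lipschitz with an open subset of $\mathscr{G}$, carrying $\widehat{\boundangle}\cap\Sigma$ into $\boundangle$, so by Part 1 this slice has Hausdorff dimension $0$. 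Since the flow-box map $(\sigma,t)\mapsto\phi_t(\sigma)$ is a diffeomorphism identifying a neighbourhood of $\widehat{\boundangle}\cap\Sigma$ in $\widehat{\boundangle}$ with $(\widehat{\boundangle}\cap\Sigma)\times(-\epsilon,\epsilon)$, and $\hffdim\!\big(E\times(-\epsilon,\epsilon)\big)\le\hffdim E+1$, we get $\hffdim\widehat{\boundangle}\le 1$. The bundle projection $T^1S\to S$ is Lipschitz and maps $\widehat{\boundangle}$ onto the image of $\boundangle$ in $S$, so that image has Hausdorff dimension at most $1$, hence exactly $1$.
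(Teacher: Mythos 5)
Your argument for the image of $\boundangle$ in $S$ is essentially the paper's: the Hopf parametrization plus the inequality $\hffdim(E\times I)\le\hffdim E + 1$, then project from $T^1S$ to $S$. That part is fine.

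The first part is a genuinely different route, and it has a real gap. You propose to follow Birman--Series directly and prove $N(n)=e^{o(n)}$ for the number of cutting sequences realizable by $\boundangle$-geodesics. The paper explicitly notes, via Example~\ref{ex:boundangle}, that a geodesic in $\boundangle$ can have self-intersection number growing like $L^2$ and hence lies outside the scope of Sapir's theorem, which is the state of the art for direct cutting-sequence counting (it handles $o(L^2)$ self-intersections, not $\Theta(L^2)$). Your claim that ``one shows the arc is determined by an amount of combinatorial data subexponential in $L$'' is precisely the obstacle: the angle bound $\ge\delta$ gives a bounded number of \emph{directional} families at each point (cf.\ Corollary~\ref{cor:finite-Ap}), but the number of nearly-parallel strands within a single family in a bounded region is unbounded as $L\to\infty$, and you offer no mechanism for compressing that data. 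Unlike the simple case, transverse crossings mean the strand ordering within a family is not globally preserved along the arc, which is exactly what makes the Birman--Series bookkeeping collapse. Absent a concrete reconstruction lemma, the exponent does not close.

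The paper sidesteps the counting entirely. Its key structural input is Proposition~\ref{prop:simple-ends}: any $\alpha\in\boundangle$ lifts, in some finite cover of $S$, to an asymptotically simple geodesic. This is proved by building a band complex thickening of $\overline{\alpha}$ (using the closure structure from Section~\ref{sec:clausura}) and then invoking Scott's subgroup separability to unfold the singularly recurrent bands. Once in the asymptotically simple world, the set $\simpends$ embeds into $\pi(\mathscr{G}_s)\times\pi(\mathscr{G}_s)$; the paper then upgrades Birman--Series from Hausdorff dimension zero to \emph{packing} dimension zero for $\mathscr{G}_s$ and applies Tricot's inequality $\hffdim(A\times B)\le\packdim A+\hffdim B$ to conclude $\hffdim\simpends=0$. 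Finally a countable union over finite covers gives $\hffdim\boundangle=0$. None of this requires the subexponential cutting-sequence bound you assume, and you will need either to prove that bound from scratch or adopt the cover-and-thicken reduction.
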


\begin{figure}[ht]
   \centering
   \def\svgwidth{0.8\columnwidth}
   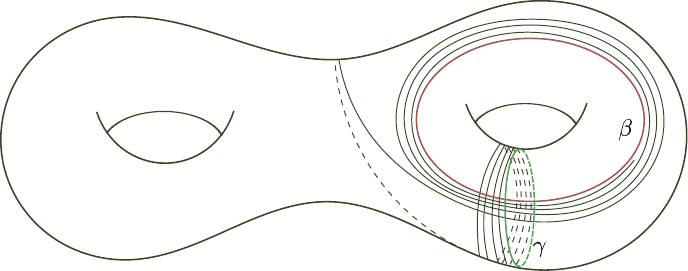
   \caption{An example of a geodesic $\alpha$ with infinite self-intersections and $a(\alpha) > 0$.}
   \label{fig:boundangle}
 \end{figure}

The main ingredient of the proof is the following proposition, which reduces the problem to the study of a special class of geodesics. 

   \begin{prop*}[Proposition \ref{prop:simple-ends}]
      Let $\alpha \in \boundangle$. There exists a finite cover $p: \tilde{S} \to S$ and a curve $\tilde{\alpha}$ in $\tilde{S}$ such that $p(\tilde{\alpha}) = \alpha$ and $\tilde{\alpha}$ is asymptotically simple.
   \end{prop*}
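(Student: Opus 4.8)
The plan is to translate ``asymptotically simple'' into a statement about the $\omega$-limit sets of the two ends of $\alpha$, to read off the structure of those limit sets from the hypothesis $a(\alpha)\ge\delta$, and then to kill the offending self-intersections by a subgroup-separability argument.

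Fix a point on $\alpha$ and let $\rho^{+},\rho^{-}$ be the two geodesic rays of $\alpha$ from it, with $\omega$-limit sets $\Omega^{\pm}\subset T^{1}S$ under the geodesic flow (closed and flow-invariant). The first step is the observation that, \emph{because} $a(\alpha)\ge\delta$, the ray $\rho^{+}$ is eventually embedded if and only if $\Omega^{+}$ is crossing-free --- equivalently, is the unit-tangent lift of a geodesic lamination --- and likewise for $\rho^{-}$ and for the pair. If $\rho^{+}$ self-meets at times $t_{n}\to\infty$, the two branches at the $n$-th self-intersection give unit vectors with a common footpoint and crossing angle $\ge\delta$; along a subsequence they converge to $v_{1},v_{2}\in\Omega^{+}$ with common footpoint and crossing angle $\ge\delta>0$, so the limiting leaves are genuinely distinct and transverse and $\Omega^{+}$ is not crossing-free. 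Conversely two transverse leaves of $\Omega^{+}$ through a common footpoint $q$ are shadowed by excursions of $\rho^{+}$ passing near $q$ in those two directions at arbitrarily large times, and such excursions must actually cross, so $\rho^{+}$ is not eventually embedded. The hypothesis $\delta>0$ is exactly what excludes the alternative of a ray spiralling ever more tightly and meeting itself forever at shrinking angles while $\Omega^{+}$ stays a lamination. So it suffices to produce a finite cover $p\colon\tilde S\to S$ and a lift $\tilde\alpha$ of $\alpha$ for which the $\omega$-limit sets of both ends of $\tilde\alpha$ are crossing-free.

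Next I would analyse the recurrent set $\Omega:=\Omega^{+}\cup\Omega^{-}$ of a $\boundangle$-geodesic. The angle bound forces at most $\lfloor\pi/\delta\rfloor$ leaves of $\Omega$ through any footpoint, so $\Omega$ is a geodesic lamination carrying a uniformly bounded amount of transverse branching. More importantly, each self-intersection of $\rho^{+}$ cuts out a geodesic loop whose corner has interior angle $\ge\delta$ --- a $\delta$-taut loop, hence a uniform quasigeodesic fellow-travelling its geodesic representative --- and the key claim I would try to prove is that only finitely many free homotopy classes $[c_{1}],\dots,[c_{r}]$ occur among these loops for self-intersections past some time $T_{0}$, with uniformly bounded wrapping, and the same for $\rho^{-}$ (and for the mutual intersections of $\rho^{+}$ and $\rho^{-}$). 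Granting this, every sufficiently-far-out self-intersection of $\alpha$ is bounded by a loop whose class in $\pi_{1}(S)$ lies in a fixed finite set of conjugacy classes and their controlled powers. Since a self-intersection persists in the cover given by $H\le\pi_{1}(S)$ precisely when the loop it bounds is conjugate into $H$, and surface groups are LERF (Scott), one can choose a finite-index normal $H$ avoiding all these finitely many elements; for the corresponding cover $\tilde S$ and any lift $\tilde\alpha$ (the choice being immaterial since $H$ is normal), every far-out self-intersection of $\alpha$ unwinds. By the first step the two ends of $\tilde\alpha$ are then eventually embedded, and, on including the classes responsible for mutual intersections, eventually disjoint, so $\tilde\alpha$ is asymptotically simple; the finitely many residual self-intersections lie in a compact part of $\tilde S$ and are irrelevant.

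The part I expect to be the real obstacle is the finiteness claim above, together with checking that the cover genuinely makes $\Omega$ crossing-free rather than merely moving the branch locus. The awkward case is when $\Omega$ contains a genuine (non-closed) minimal lamination \emph{and} a transverse recurrent loop, so that $\rho^{\pm}$ must shuttle between shadowing the lamination and winding near the loop: one must rule out unbounded winding, show the transitional arcs do not generate new homotopy classes, and verify that, although the preimage of the crossing point of the lamination and the loop is generally still a crossing point of $\tilde S$, the portion of it that the lifted ray actually shadows is not. This is precisely where $\delta>0$ does the work --- near-parallel self-returns, which would produce infinitely many independent loops or unbounded winding, are exactly the configurations a positive lower bound on self-intersection angles forbids --- and it is this uniformity that should make both the finiteness and the covering step go through.
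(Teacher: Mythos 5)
Your high-level strategy---reduce to a finiteness statement about the loops cut out by far-out self-intersections, then use LERF (Scott) to build a finite cover in which those loops fail to lift---lands in the same territory as the paper, which also invokes Scott's theorem. But there is a genuine gap at exactly the point you flag: the claim that the self-intersections of $\rho^{+}$ past some time $T_{0}$ produce loops lying in only finitely many conjugacy classes ``with bounded wrapping'' is not established, and it is not at all clear how to establish it directly. A geodesic in $\boundangle$ can have self-intersection loops of unbounded length and unbounded combinatorial complexity (the angle bound prevents tight spiralling, but it does not by itself confine the recurrent part of $\rho^{+}$ to a neighborhood of a fixed finite set of closed curves). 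Moreover even granting the finiteness claim, the LERF step needs care: ``avoiding finitely many conjugacy classes'' only removes the corresponding self-intersections for a \emph{specific} lift $\tilde\alpha$, and ``bounded wrapping'' has to be converted into a finite list of group elements (not just classes) before Scott applies. As written, the argument has an unclosed hole where the hard work should be.

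The paper avoids proving any such finiteness of conjugacy classes by encoding the intersection data combinatorially. It builds a \emph{band complex} structure $(i,X)$ on a small tubular neighborhood of $\overline{\alpha}$ (Proposition \ref{prop:band-complex}), using the $\delta$-separation of directions (your $\lfloor\pi/\delta\rfloor$ bound appears there as Corollary \ref{cor:finite-Ap}) and the approximate-perpendicularity lemmas to show that $\mathcal{N}(\overline\alpha,\epsilon)$ decomposes into finitely many bands in which no self-intersection can occur, while self-intersections are forced whenever a carried geodesic meets two bands that overlap under $i$. The bands meeting the tail of $\alpha$ infinitely often are the \emph{recurrent} bands, and those whose image overlaps another recurrent band are \emph{singularly recurrent}; $\alpha$ is asymptotically simple iff there are none of the latter (Remark \ref{rmk:bandas-no-intersectan}). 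Then, instead of enumerating homotopy classes, the proof uses the Anosov closing lemma to produce a single closed geodesic $\gamma$ shadowing a long segment of $\alpha$ that visits every singularly recurrent band, so that each such band lies in $\mathcal{N}(\gamma,2\epsilon)$. Scott's theorem lifts $\gamma$ to a \emph{simple} closed curve $\gamma'$ in a finite cover $S'$; Remark \ref{rmk:cover-of-thickening} lifts the band complex; and the simplicity of $\gamma'$ kills singular recurrence upstairs. This is a neat shortcut: one closed geodesic plus Scott does all the work that your finiteness-and-LERF step was supposed to do, without ever needing to bound the number of homotopy types of intersection loops. If you want to salvage your route, the band complex (or some equivalent finite combinatorial model of $\overline\alpha$) is essentially what you would need in order to prove the finiteness you assume.
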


Asymptotically simple geodesics, defined in \ref{def:ends-simply}, are geodesics that are simple (i.e. no self-intersections) from a certain point onwards and backwards. 
To prove this proposition we encode the self-intersection data of $\alpha$ into a finite combinatorial object, called a \emph{band complex}, in the spirit of radallas, in the sense of Erlandsson-Souto \cite{erlandsson2016counting}. This, in turn, enables us to construct a finite cover in which these intersections are resolved.
From the proposition, the proof of Theorem \ref{teo:el-wan} essentially follows from the fact that these geodesics have Hausdorff dimension 0. This is not a straightforward consequence of Birman-Series \cite{birman1985geodesics}, which states that the set of simple geodesics have Hausdorff dimension 0.

Our second theorem follows from Theorem \ref{teo:el-wan}. Let $\alpha$ be a geodesic in $S$. We say that $\alpha$ \textit{bounds a triangle} if there exist lifts $\alpha_1, \alpha_2, \alpha_3$ to $\HH^2$ which intersect pairwise.
Equivalently, $\alpha$ bounds a triangle if there exists $L > 0$ such that a connected component of $S \setminus \alpha|_{[-L, L]}$ is a triangle with geodesic boundary.

Let $\mathscr{G}_\Delta$ be the set of geodesics that bound a triangle.

\begin{thm}\label{teo:triangulos}
The set $\mathscr{G}_\Delta$ is typical. Moreover, the image of the complement of $\mathscr{G}_\Delta$ in $S$ has Hausdorff dimension $1$.
\end{thm}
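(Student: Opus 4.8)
The plan is to derive Theorem~\ref{teo:triangulos} from Theorem~\ref{teo:el-wan}, the link being the inclusion
\[
\mathscr{G}\setminus\mathscr{G}_\Delta\ \subseteq\ \{\alpha\in\mathscr{G}:a(\alpha)>0\}\ =\ \bigcup_{n\ge 1}\mathscr{G}_{\ge 1/n},
\]
valid after possibly removing from $\mathscr{G}\setminus\mathscr{G}_\Delta$ a subset of Hausdorff dimension zero. Granting this, each $\mathscr{G}_{\ge 1/n}$ has Hausdorff dimension $0$ by Theorem~\ref{teo:el-wan}, so by countable stability of Hausdorff dimension the right-hand side does too, and hence $\hffdim(\mathscr{G}\setminus\mathscr{G}_\Delta)=0$; that is, $\mathscr{G}_\Delta$ is typical. (Note also that $\mathscr{G}_\Delta$ is open and $\pi_1(S)$-invariant, since the pairwise transversal crossing of three specified lifts is an open condition on the two endpoints of $\alpha$; so $\mathscr{G}\setminus\mathscr{G}_\Delta$ is a closed, $\pi_1(S)$-invariant set.)

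The geometric heart of the matter is therefore the implication: \emph{a geodesic $\alpha$ with $a(\alpha)=0$ bounds a triangle}, up to an exceptional set of Hausdorff dimension zero. To prove it, take self-intersections of $\alpha$ with angles $\theta_k\to 0$. Lifting the $k$-th one yields two lifts $\ell,\ell'$ of $\alpha$ meeting at a point $x_k\in\HH^2$ at angle $\theta_k$; since $\theta_k$ is small, $\ell$ and $\ell'$ stay within a fixed distance $\varepsilon$ (below the systole of $S$) along a sub-arc $J_k\subseteq\ell$ through $x_k$ of length $R_k$, with $R_k\to\infty$. If some further lift $m$ of $\alpha$ crosses $\ell$ transversally at a point of $J_k$ well inside this $\varepsilon$-fellow-travelling window, then $m$ also crosses $\ell'$, the three lifts $\ell,\ell',m$ intersect pairwise, and $\alpha$ bounds a triangle. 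Such an $m$ should be produced from the recurrence of $\alpha$: the projection of $J_k$ to $S$ is a geodesic arc of length $R_k\to\infty$, and along a subsequence where the tangent vectors at the midpoints converge (compactness of the unit tangent bundle of $S$) one obtains in the limit a geodesic lying in the closure in $\mathscr{G}$ of the set of lifts of $\alpha$ which $\varepsilon$-fellow-travels a translate of itself along an entire ray; inspecting this limiting configuration shows that, unless $\alpha$ lies in a restricted subset of $\mathscr{G}$ analogous to the asymptotically simple locus, the $\pi_1(S)$-orbit of $\alpha$ already contains a lift crossing $\ell$ transversally inside $J_k$. The exceptional geodesics are dealt with as in the proof of Theorem~\ref{teo:el-wan}: one encodes them by band complexes and makes them asymptotically simple in a finite cover via Proposition~\ref{prop:simple-ends}, so they form a set of Hausdorff dimension zero.

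Finally, for the image of $\mathscr{G}\setminus\mathscr{G}_\Delta$ in $S$: the lower bound is immediate, since a simple geodesic has pairwise disjoint lifts and so does not bound a triangle, whence $\mathscr{G}\setminus\mathscr{G}_\Delta$ contains every simple geodesic---in particular a simple closed geodesic, whose image in $S$ is a circle---so the dimension is at least $1$. For the upper bound, the inclusion above shows that this image is contained in the union of the images of the $\mathscr{G}_{\ge 1/n}$ together with the image of the exceptional set; each image of $\mathscr{G}_{\ge 1/n}$ has Hausdorff dimension $1$ by Theorem~\ref{teo:el-wan}, the image of the exceptional set has Hausdorff dimension at most $1$ (being swept out by a Hausdorff-dimension-zero family of geodesics), and countable stability gives an upper bound of $1$; so the dimension is exactly $1$. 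The step I expect to be the main obstacle is the geometric implication of the second paragraph: constructing the transversal third lift and, especially, pinning down precisely the geodesics with $a(\alpha)=0$ for which this fails and verifying that they are reached by the band-complex and finite-cover machinery behind Theorem~\ref{teo:el-wan}.
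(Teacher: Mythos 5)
Your overall plan---reduce to Theorem~\ref{teo:el-wan} via the inclusion $\mathscr{G}\setminus\mathscr{G}_\Delta\subseteq\bigcup_{\delta>0}\boundangle$ and then invoke countable stability of Hausdorff dimension (and the analogous argument for the image in $S$)---is exactly the paper's strategy. The part you flag as the main obstacle is indeed where your proposal has a genuine gap: the second paragraph sketches a fellow-travelling-plus-recurrence construction of a transversal third lift, but ``inspecting this limiting configuration shows that, unless\ldots'' is not an argument, and you never actually produce the lift $m$ or characterize when it fails. Note also that the hedge about removing an exceptional subset of dimension zero is unnecessary: the inclusion holds on the nose.

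The paper establishes the inclusion by a different route. Assuming $\alpha\in\notboundtri$ has self-intersection angles tending to $0$ (or to $\pi$), it extracts a limit point $(p,v)\in T^1S$ of the intersection vectors and shows that the geodesic $\gamma$ through $(p,v)$ is simple and disjoint from $\alpha$ (otherwise one gets three pairwise-intersecting lifts, contradicting $\alpha\notin\mathscr{G}_\Delta$). It then shows, using Lemma~\ref{lem:hojacerrada}, that $\lambda=\overline{\gamma}$ is a geodesic lamination with no closed leaf, so $\alpha$ lies in a connected component of $S\setminus\lambda$, and Casson--Bleiler's structure theorem (Lemma~\ref{lem:Casson-Bleiler}) says that component is an ideal polygon or a crowned surface. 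Either possibility forces $\alpha$ to have only finitely many self-intersections, contradicting the hypothesis that its self-intersection angles accumulate at $0$. This lamination-complement analysis is precisely what resolves the scenario you are worried about (recurrence failing to produce a transversal crossing), and it is the ingredient missing from your proposal; the band-complex/finite-cover machinery from Proposition~\ref{prop:simple-ends} that you suggest falling back on plays no role here.
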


In the paper \cite{athreya2021local}, the authors investigate the statistical behavior of a random geodesic according to the Liouville measure. In particular, the authors study the polygons determined by arcs arising from the intersection between a random geodesic and a small ball $B(p, \epsilon)$ centered at a point $p \in S$. We emphasize that this is a measure theoretic viewpoint and as such differs from ours.

\subsection{Previous results}

We now provide some previous results that motivated Theorems \ref{teo:el-wan} and \ref{teo:triangulos}.

A foundational result in this direction is due to Birman-Series \cite{birman1985geodesics}, who prove that the set of geodesics that intersect themselves an infinite number of times is typical. In their paper, they also prove that the image of those geodesics on the surface has Hausdorff dimension 1.

A generalization of this result was proved by Sapir in \cite{sapir2016birman}. Given a complete geodesic $\alpha: \RR \to S$, let $\iota(\alpha_L)$ be the set of self-intersection points of $\alpha|_{[-L,L]}$. Now let $f: \RR_+ \to \RR_+$ be a function and define the set
$$\mathscr{G}(f) = \left\{ \alpha \in \mathscr{G}: \iota(\alpha_L) < f(L) \text{ for all } L \in \RR_+ \right\}.$$
Sapir proved the following theorem:

\begin{thm*}[Sapir \cite{sapir2016birman}]
   If $f(L) = o(L^2)$ then the image of $\mathscr{G}(f)$ in $S$ has Hausdorff dimension 1.
\end{thm*}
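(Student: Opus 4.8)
The lower bound is immediate: since $f$ is positive, $\mathscr{G}(f)$ contains every simple geodesic of $S$, in particular every simple closed geodesic, and a simple closed geodesic is a smooth circle, so the image of $\mathscr{G}(f)$ in $S$ has Hausdorff dimension at least $1$. The content of the theorem is the matching upper bound, and the plan is to run the Birman--Series covering argument while replacing their count of simple cutting sequences by a sub-exponential count for sequences whose realizing arcs have sub-quadratically many self-intersections. Two reductions. First, it suffices to show $\hffdim \mathscr{G}(f)=0$ in $\mathscr{G}$: restricting to a compact fundamental domain $D$, the image of $\mathscr{G}(f)$ in $S$ is a Lipschitz image of $\mathscr{G}(f)\times[0,R]$ (a geodesic together with a position along it inside $D$, with $R$ a few times $\operatorname{diam}D$), hence has Hausdorff dimension at most $\hffdim\mathscr{G}(f)+1$. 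Second, fix a fundamental polygon $P$ for $S$ in $\HH^2$ and the induced tiling; every $\alpha\in\mathscr{G}$ has a bi-infinite \emph{cutting sequence} recording the ordered sides of the tiling it crosses, and (Birman--Series) there are constants $C_1,\lambda>0$ such that any two geodesics whose central length-$(2n+1)$ cutting sequences agree lie within $C_1 e^{-\lambda n}$ of each other in $\mathscr{G}$.

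Now cover $\mathscr{G}(f)$. If $\alpha\in\mathscr{G}(f)$, then $2n+1$ consecutive crossings of $\alpha$ with the tiling span a sub-arc of length at most $C_2 n$ (with $C_2$ a few times $\operatorname{diam}P$), which therefore has fewer than $f(C_2 n)$ self-intersections; so the central length-$(2n+1)$ cutting sequence of $\alpha$ lies in the set $\mathcal{A}_n$ of length-$(2n+1)$ cutting sequences realized by \emph{some} geodesic arc of length $\le C_2 n$ having fewer than $f(C_2 n)$ self-intersections. Hence $\mathscr{G}(f)$ is covered by the corresponding cylinders, each of diameter $\le C_1 e^{-\lambda n}$, so that for every $s>0$
\[
\mathcal{H}^s\bigl(\mathscr{G}(f)\bigr)\ \le\ \liminf_{n\to\infty}\ \#\mathcal{A}_n\cdot\bigl(C_1 e^{-\lambda n}\bigr)^{s}.
\]
Thus $\hffdim\mathscr{G}(f)=0$, and the theorem, will follow as soon as one establishes
\[
\tfrac1n\log\#\mathcal{A}_n\ \longrightarrow\ 0\qquad(n\to\infty).
\]

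This is the crux, and the only place the hypothesis $f(L)=o(L^2)$ enters. It reduces to the counting lemma: if $M(\kappa,n)$ is the number of length-$n$ cutting sequences realized by a geodesic arc with at most $\kappa n^2$ self-intersections, then the exponential growth rate $c(\kappa):=\limsup_n\tfrac1n\log M(\kappa,n)$ satisfies $c(\kappa)\to0$ as $\kappa\to0^{+}$; granting this and writing $\kappa_n=f(C_2 n)/n^2\to0$, one gets $\tfrac1n\log\#\mathcal{A}_n\le\tfrac{\log f(C_2 n)}{n}+c(\kappa_n)+o(1)\to0$. To prove the lemma I would argue through thermodynamic formalism for the geodesic flow: the self-intersection number of a long arc equals, up to $o(n^2)$, a continuous quadratic functional $i(c,c)\,n^2$ of its empirical geodesic current $c$ (Bonahon; cf. Chas--Lalley), and $i(c,c)=0$ exactly on measured laminations, so the constraint ``at most $\kappa n^2$ self-intersections'' confines $c$ to a neighbourhood of the measured-lamination locus that shrinks as $\kappa\to0$; by the level-2 large deviation / variational principle, $c(\kappa)$ is then the supremum of the measure-theoretic entropy $h_\mu$ over flow-invariant $\mu$ meeting this constraint, and as $\kappa\to0$ such $\mu$ are forced near the set of unit vectors tangent to simple geodesics --- a closed, flow-invariant set of topological entropy $0$, because Birman--Series give only sub-exponentially many simple cutting sequences of length $n$, so its upper box dimension, hence its topological entropy, vanishes --- whence $c(\kappa)\to0$ by upper semicontinuity of entropy. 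Alternatively one can attempt a hands-on refinement of the Birman--Series combinatorics: bound the number of cutting sequences realizing a ``nearly simple'' arc configuration by cutting the arc at its double points into simple sub-arcs, counting these and the admissible gluings, exploiting that the chord diagram of a geodesic arc is severely constrained by the topology of $S$ and by the order in which the crossings occur.

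I expect this counting lemma to be the main obstacle. Birman--Series need only the case of a uniformly bounded number of self-intersections, where the cutting-sequence count is outright polynomial in $n$; here the number of self-intersections is allowed to grow, and one must show that the exponential growth rate of near-simple cutting sequences degrades continuously to $0$ as the self-intersection density tends to $0$ --- a quantitative ``sparsity with room to spare'' strengthening of the Birman--Series estimate, to which the thermodynamic route reduces via the entropy-$0$ property of the simple locus. Everything else --- the choice of $P$, the contraction estimate for the cylinders, the reduction of the image in $S$ to $\hffdim\mathscr{G}(f)$, the implication $\#\mathcal{A}_n=e^{o(n)}\Rightarrow\hffdim\mathscr{G}(f)=0$, and the lower bound --- is routine.
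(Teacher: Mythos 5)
The paper does not prove this statement: it is quoted verbatim from Sapir \cite{sapir2016birman} in the ``Previous results'' section, so there is no in-paper argument to compare yours against. Judged on its own terms, your proposal gets the architecture right --- the lower bound via simple closed geodesics, the reduction of the image in $S$ to $\hffdim\mathscr{G}(f)$ via a product with an interval (the same device the paper uses at the end of its proof of Theorem \ref{teo:el-wan}), the Birman--Series cylinder cover with exponentially shrinking diameters, and the reduction of everything to the single claim $\frac1n\log\#\mathcal{A}_n\to 0$. That reduction is faithful to how such results are actually proved, and your ``hands-on'' alternative (cutting the arc at its double points into simple sub-arcs and counting configurations) is in fact the route Sapir takes.

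The genuine gap is that the counting lemma is the theorem, and you do not prove it; you say so yourself. The thermodynamic sketch you offer in its place has two soft spots that would need real work. First, the passage from ``subexponentially many simple cutting sequences'' to entropy zero of the simple locus is fine, but your intermediate justification is wrong as stated: the set of unit vectors tangent to complete simple geodesics contains whole orbits, so its upper box dimension is at least $1$, not $0$; what you want is either the direct symbolic count or a dimension--entropy inequality (Ledrappier--Young type) showing that an invariant measure carried by a set of Hausdorff dimension $1$ in $T^1S$ has zero entropy. Second, and more seriously, the step ``$c(\kappa)$ equals the supremum of $h_\mu$ over invariant measures with $i(\mu,\mu)\le\kappa$'' is the upper large-deviation bound for a \emph{counting} problem constrained by empirical measures; this does not follow from the variational principle alone. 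One needs a Bowen/Pfister--Sullivan-type estimate bounding the topological entropy of the set of orbit segments whose empirical currents lie in a given closed set, and then a conversion from that entropy to the number of length-$n$ cutting-sequence cylinders; additionally, the identification of the self-intersection count of an arbitrary (not generic) arc of length $L$ with $\tfrac12 i(c_L,c_L)L^2$ up to $o(L^2)$, uniformly enough to run the compactness argument, must be established in the Bonahon--Erlandsson--Souto framework. Each of these is plausible for the Anosov geodesic flow, but none is routine, and together they constitute the entire content of the result. As written, the proposal is a credible plan with the decisive step left open.
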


The author's techniques also allow to prove that the set $\mathscr{G}(f)$ itself has Hausdorff dimension 0. This is to say, the intersections of a typical geodesic grow quadratically in length. Observe that the geodesic described in \ref{ex:boundangle} is in $\boundangle$, but the growth of self-intersections is not of $o(L^2)$, showing that our result is not comprised in Sapir's theorem.

\subsection*{Acknowledgments}
I am grateful to Andrés Sambarino for his advice and guidance in the preparation of this paper, and to Viveka Erlandsson for suggesting a key idea for the proof of Theorem \ref{teo:triangulos}. I also thank Odylo Costa for offering helpful corrections. This work has received funding from the European Union’s Horizon 2020 research and innovation program under the Marie Sk\l{}odowska-Curie grant agreement No.\ 945332. \includegraphics*[scale = 0.028]{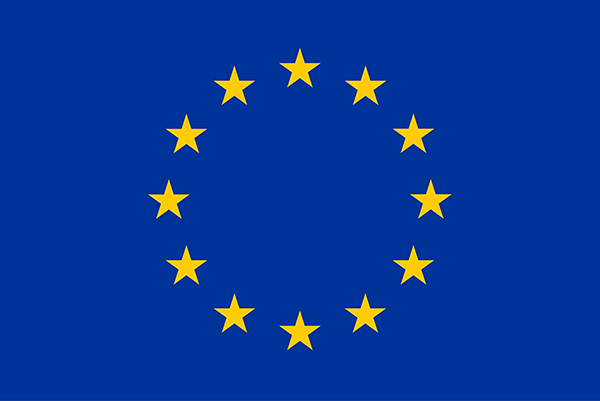}

\section{Thickening of a geodesic}

Fix a closed hyperbolic surface $S$. Given a complete geodesic $\alpha$ in $S$, we denote by $\dot{\alpha}$ its lift to the unit tangent bundle of $S$.
The main goal of this section is to prove Proposition \ref{prop:simple-ends}.

We proceed as follows. First, we identify and prove fundamental properties of geodesics that are asymptotically simple. Next, we study the structure of the closure of a geodesic in $\boundangle$, both as a subset of $S$ and of $T^1S$. Finally, we construct a thickening of a geodesic (see Definition~\ref{def:thickening}), which we then use to lift $\alpha$ to an asymptotically simple geodesic.

\subsection{Asymptotically simple geodesics.}

We will study the set of geodesics that do not intersect themselves from some point onwards (or backwards). 

\begin{defi}\label{def:ends-simply}
 Let $\alpha$ be a geodesic. We say that $\alpha$ \textit{is asymptotically simple in the future} (resp. \textit{in the past}) if there exists a constant $K \in \RR$ such that the restriction $\alpha|_{[K, +\infty)}$ (resp. $\alpha|_{(-\infty, K]}$) is a simple curve. 
 If both conditions are true, we say that the geodesic is \textit{asymptotically simple}. We denote the set of such geodesics by $\simpends$.
\end{defi}

These notions can be restated in dynamical terms. Let $\alpha: \RR \to S$ be a geodesic. We say that a point $(p, v) \in T^1S$ is an $\omega^+$-limit (resp. $\omega^-$-limit) point of $\alpha$ if there exists a sequence $t_n \in \RR$ such that $\lim_{n \to +\infty} t_n = +\infty$ (resp. $-\infty$) and
\[
\lim_{n \to +\infty} \dot{\alpha}(t_n) = (p, v).
\]
We denote the $\omega^+$-limit set (resp. $\omega^-$-limit set) of $\alpha$ by $\omega^+(\alpha)$ (resp. $\omega^-(\alpha)$). It is not hard to show that the $\omega$-limit sets are closed and geodesically invariant (i.e. if a point in $T^1S$ is in $\omega^+$ or $\omega^-$, then the whole geodesic is in the set).

\begin{rmk}
 What is often referred to as the $\alpha$-limit set in the dynamical systems' literature is denoted here by $\omega^-$ in order to avoid confusion with our notation for geodesics.
\end{rmk}

It is not hard to show that if a geodesic $\alpha$ is asymptotically simple in the future, then every geodesic in $\omega^+(\alpha)$ is simple; similarly, if $\alpha$ is asymptotically simple in the past, then every geodesic in $\omega^-(\alpha)$ is simple.

\begin{rmk}
    Let $\gamma \subseteq \omega^+(\alpha)$ and $\tilde{\alpha}$ a lift of $\alpha$ to $\HH^2$. It is not necessarily true that there is a lift of $\gamma$ that is asymptotic to $\tilde{\alpha}$. A counterexample is a dense geodesic, whose $\omega^+$-limit set is the whole unit tangent bundle.
\end{rmk}

The following lemma shows that if two geodesics intersect, then geodesics that are sufficiently close to them intersect as well, at a nearby point and with approximately the same angle. This property is important in understanding both asymptotically simple geodesics and geodesics in $\boundangle$. The reader can see Figure \ref{fig:continuidad-intersecciones} for an example.

\begin{figure}[h]
    \centering
    \def\svgwidth{0.5\columnwidth}
    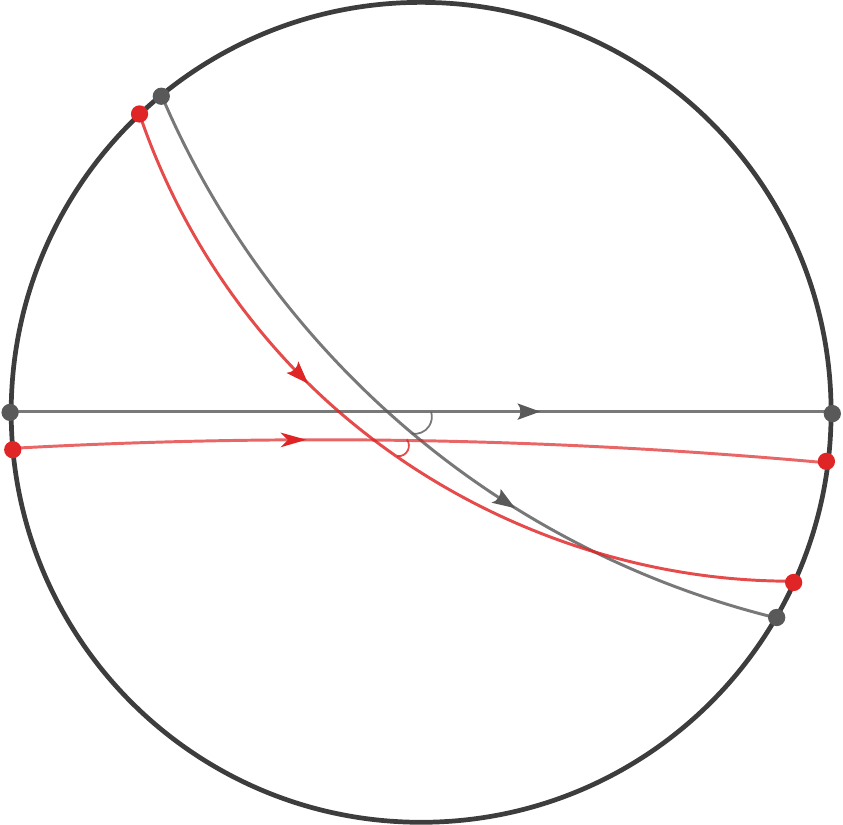
    \caption{After a small perturbation of the two pairs of gray points at infinity which bound geodesics that self intersect, we see the red geodesics, that have an intersection with similar angle close to the original intersection point.}
    \label{fig:continuidad-intersecciones}
  \end{figure}


The fact that the function which assigns to two intersecting geodesics in $\HH^2$ their angle is continuous gives rise to the following needed lemma.

\begin{lem}\label{lem:continuidad-intersecciones}
 Let $(p, v), (p, v') \in T^1S$. For every $\epsilon_1, \epsilon_2 > 0$ there exists an $\eta > 0$ such that if $(q_1, v_1) \in B_{T^1S}((p, v), \eta)$ and $(q_2, v_2) \in B_{T^1S}((p, v'), \eta)$, and $\alpha_1$ and $\alpha_2$ are the geodesics through $(q_1, v_1)$ and $(q_2, v_2)$ respectively then $\alpha_1$ and $\alpha_2$ intersect at a point $q \in B(p, \epsilon_1)$ and $$|\angle_p(\alpha_1, \alpha_2) - \angle(v, v')| < \epsilon_2.$$
 Moreover, $\eta$ only depends on the intersection angle. 
\end{lem}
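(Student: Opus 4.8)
The plan is to lift the data to the universal cover $\HH^2$, where the assignment taking a transversally intersecting pair of complete geodesics to its intersection point and its angle of intersection is continuous, and then to use the homogeneity of $\HH^2$ to arrange that the constant $\eta$ depends only on the angle. (In the displayed inequality, $\angle_p$ should be read as the angle $\angle_q$ at the intersection point $q$.)

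First I would reduce the statement to $\HH^2$. Let $\pi\colon\HH^2\to S$ be the universal covering, whose name we also use for the induced covering $T^1\HH^2\to T^1S$; since $T^1S$ is compact it has positive injectivity radius, so after shrinking $\eta$ we may assume $B_{T^1S}((p,v),\eta)$ and $B_{T^1S}((p,v'),\eta)$ lift isometrically. Fix a lift $\tilde p$ of $p$ and let $\tilde v,\tilde v'\in T^1_{\tilde p}\HH^2$ be the corresponding directions, so the geodesics they span cross at $\tilde p$ with angle $\theta:=\angle(v,v')>0$. Given $(q_i,v_i)$ as in the statement, let $(\tilde q_i,\tilde v_i)$ be its lift in the ball about $(\tilde p,\tilde v)$, resp.\ about $(\tilde p,\tilde v')$, and let $\tilde\alpha_1,\tilde\alpha_2$ be the geodesics through them, so that $\pi(\tilde\alpha_i)=\alpha_i$. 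If I can show $\tilde\alpha_1$ and $\tilde\alpha_2$ meet at some $\tilde q\in B(\tilde p,\epsilon_1)$ with $|\angle_{\tilde q}(\tilde\alpha_1,\tilde\alpha_2)-\theta|<\epsilon_2$, then $q:=\pi(\tilde q)$ lies in $\alpha_1\cap\alpha_2\cap B(p,\epsilon_1)$ (because $\pi$ is $1$-Lipschitz) and $\angle_q(\alpha_1,\alpha_2)=\angle_{\tilde q}(\tilde\alpha_1,\tilde\alpha_2)$ (because $\pi$ is a local isometry). So it suffices to prove the assertion inside $\HH^2$.

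In $\HH^2$ I would argue by continuity. Identify complete geodesics with points of $T^1\HH^2$, equivalently with linked pairs of points of the circle $\partial\HH^2$. The set $\mathcal U\subset T^1\HH^2\times T^1\HH^2$ of pairs spanning transversally intersecting geodesics is open, since transversal intersection is equivalent to the two pairs of endpoints being linked on $\partial\HH^2$, an open condition; in particular every pair close to a transversal one is again transversal, hence intersecting. On $\mathcal U$ the map $F\colon\big((\tilde q_1,\tilde v_1),(\tilde q_2,\tilde v_2)\big)\mapsto(\text{intersection point},\ \text{intersection angle})\in\HH^2\times(0,\pi/2]$ is continuous — indeed real-analytic, being given by elementary hyperbolic trigonometry applied to the four endpoints. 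Since $\theta>0$, the pair $\big((\tilde p,\tilde v),(\tilde p,\tilde v')\big)$ lies in $\mathcal U$ with $F$-value $(\tilde p,\theta)$, and continuity of $F$ there furnishes an $\eta>0$ such that every $\eta$-perturbation of it still lies in $\mathcal U$ and is sent into $B(\tilde p,\epsilon_1)\times(\theta-\epsilon_2,\theta+\epsilon_2)$. This proves the lemma with a constant $\eta=\eta(p,v,v',\epsilon_1,\epsilon_2)$.

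Finally, for the uniformity I would use that $\mathrm{Isom}^{+}(\HH^2)$ acts simply transitively on $T^1\HH^2$ and that $F$ is equivariant, the angle coordinate being fixed by isometries. Fix a reference point $(o,w_0)\in T^1\HH^2$. For any configuration with $\angle(v,v')=\theta$, let $g\in\mathrm{Isom}^{+}(\HH^2)$ be the unique isometry with $g\cdot(\tilde p,\tilde v)=(o,w_0)$; then $g\cdot(\tilde p,\tilde v')=(o,w')$ with $\angle(w_0,w')=\theta$, and there are only finitely many such directions $w'$. Applying the previous paragraph to each of these finitely many configurations $\big((o,w_0),(o,w')\big)$ and taking the least of the resulting constants produces an $\eta$ depending only on $\theta$ (and $\epsilon_1,\epsilon_2$); since $g$ is a distance-preserving, $F$-equivariant bijection, the estimate at the normalized configuration transfers verbatim to the original one. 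The only points that require care are the openness of transversal intersection — which is what guarantees that the perturbed geodesics genuinely meet, rather than ``meet if they happen to meet'' — and the bookkeeping identifying intersection points and angles in $S$ with those in $\HH^2$; neither is hard, and I do not foresee a deeper obstacle.
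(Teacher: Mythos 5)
Your proposal is correct and takes essentially the same approach as the paper, which appeals (in a single sentence, without a displayed proof) to the continuity of the map assigning to a transversal pair of geodesics in $\HH^2$ its intersection point and angle. You have simply filled in the details the paper leaves implicit: lifting to $\HH^2$ via the positive injectivity radius of $T^1S$, noting that transversality is an open condition so the perturbed geodesics still meet, and using the simply transitive $\mathrm{Isom}^{+}(\HH^2)$-action together with equivariance of $F$ to normalize the configuration and obtain an $\eta$ depending only on the angle.
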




The following lemma is a more relaxed condition than what we observed in the remark.

\begin{lem}\label{lem:a-la-estable}
Let $\alpha$ be a geodesic that is asymptotically simple in the future. Let $\tilde{\alpha}$ be a lift of $\alpha$ to $\HH^2$.
Then there exists a simple geodesic $\beta$ and a lift $\tilde{\beta}$ of $\beta$ such that $\tilde{\beta}(+ \infty) = \tilde{\alpha}(+ \infty)$.
\end{lem}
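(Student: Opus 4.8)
The plan is to recover the simple geodesic $\beta$ from the $\omega^{+}$-limit set of $\alpha$. Write $\xi=\tilde\alpha(+\infty)$ and fix $K$ so that $\alpha|_{[K,+\infty)}$ is simple; if $\alpha$ itself is simple we are done with $\beta=\alpha$, $\tilde\beta=\tilde\alpha$, so assume it is not. The first step is to show that the image $\Lambda\subseteq S$ of $\omega^{+}(\alpha)$ is a geodesic lamination. The set $\omega^{+}(\alpha)$ is closed and flow-invariant and (as observed above) every geodesic it contains is simple, and $\Lambda$ is closed, so it suffices to check that any two geodesics $\gamma_{1},\gamma_{2}$ contained in $\omega^{+}(\alpha)$ have images in $S$ that are equal or disjoint. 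If not, their images meet transversally at some point $p$ with angle $\theta>0$; pick the points $(p,v_{1})\in\dot\gamma_{1}$ and $(p,v_{2})\in\dot\gamma_{2}$ lying over $p$, which belong to $\omega^{+}(\alpha)$, and choose $s_{m},t_{m}\to+\infty$ with $\dot\alpha(s_{m})\to(p,v_{1})$ and $\dot\alpha(t_{m})\to(p,v_{2})$. Applying Lemma~\ref{lem:continuidad-intersecciones} to the configuration $(p,v_{1}),(p,v_{2})$, for $m$ large the geodesics through $\dot\alpha(s_{m})$ and through $\dot\alpha(t_{m})$ — both equal to $\alpha$ — meet at a point close to $p$ at angle close to $\theta$; thus $\alpha$ has a transverse self-intersection at a time near $s_{m}$ and a time near $t_{m}$, both larger than $K$ once $m$ is large, contradicting the choice of $K$. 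So $\Lambda$ is a geodesic lamination; let $\tilde\Lambda\subseteq\HH^{2}$ be its full preimage.

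The second step is that $\rho:=\tilde\alpha|_{[K,+\infty)}$ is disjoint from $\tilde\Lambda$, i.e.\ that $\alpha|_{[K,+\infty)}$ misses $\Lambda$. Suppose $\alpha(t_{0})\in\Lambda$ for some $t_{0}>K$. If $\dot\alpha(t_{0})$ is tangent to the leaf through $\alpha(t_{0})$, then $\alpha$ coincides with that leaf and is simple, which was excluded; so the crossing is transverse with some angle $\theta>0$. Let $w$ be a unit vector at $\alpha(t_{0})$ tangent to that leaf, so $(\alpha(t_{0}),w)\in\omega^{+}(\alpha)$, and choose $t_{m}\to+\infty$ with $\dot\alpha(t_{m})\to(\alpha(t_{0}),w)$. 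Applying Lemma~\ref{lem:continuidad-intersecciones} to $(\alpha(t_{0}),\dot\alpha(t_{0}))$ and $(\alpha(t_{0}),w)$ — using that the first vector lies exactly on $\alpha$ — we get, for $m$ large, a transverse self-intersection of $\alpha$ near $\alpha(t_{0})$ between a time near $t_{0}$ and a time near $t_{m}$, again contradicting the choice of $K$. Hence $\rho\cap\tilde\Lambda=\emptyset$, and since $\rho$ is connected and $\HH^{2}\setminus\tilde\Lambda$ is open, $\rho$ lies in a single (convex) complementary region $R$ of $\tilde\Lambda$; in particular $\xi\in\overline R\cap\partial\HH^{2}$.

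The last step is to identify $\xi$ as an ideal endpoint of a boundary leaf of $R$. By compactness of $T^{1}S$ one has $d(\dot\alpha(t),\omega^{+}(\alpha))\to0$, hence $d(\tilde\alpha(t),\tilde\Lambda)\to0$ (the covering $\HH^{2}\to S$ is a local isometry and $\tilde\Lambda$ is invariant); since $\rho$ lies in $R$ and $\tilde\Lambda$ is disjoint from $R$, this forces $d(\rho(t),\partial R)\to0$. So the geodesic ray $\rho$, inside the convex region $R$, converges to $\xi$ while tending to the union of boundary leaves of $R$. A geodesic ray cannot pass between two distinct boundary leaves of $R$ while staying near $\partial R$ unless those leaves share an ideal endpoint, so $\rho$ eventually fellow-travels a single boundary leaf $m$ of $R$ out to infinity (possibly after spiralling through a crown, in which case $m$ is taken to be the leaf the crown spirals onto); in either case $\xi$ is an ideal endpoint of $m$. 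Finally, $m$ projects to a simple leaf $\beta$ of $\Lambda$, and orienting $\tilde\beta:=m$ so that $\tilde\beta(+\infty)=\xi$ exhibits $\beta$ as the required simple geodesic with lift $\tilde\beta$.

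The crux is this last step: turning "$\rho$ approaches the lamination" into "$\xi$ is the endpoint of a leaf". This is exactly where asymptotic simplicity is used in an essential way, through the disjointness of $\rho$ from $\tilde\Lambda$ from the second step, which confines $\rho$ to one convex complementary region and prevents it from weaving through the leaves; one must then analyse carefully how a geodesic ray can approach the boundary leaves of that region, using that a complementary region of a geodesic lamination on a closed surface has finitely many boundary components downstairs. The first two steps are, by contrast, two routine applications of the same principle: Lemma~\ref{lem:continuidad-intersecciones} upgrades any transverse near-approach of $\alpha$ to its own limit set into a genuine self-intersection occurring arbitrarily late.
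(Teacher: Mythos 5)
Your argument goes a genuinely different way than the paper. The paper reaches straight for the Anosov closing lemma: it approximates $\alpha|_{[t_{n_k},t_n]}$ by closed geodesics $\gamma_{n,k}$, passes to the iterated limit $\gamma_{\infty,\infty}$, checks it is asymptotic to $\tilde\alpha$, and then rules out any self-intersection of $\gamma_{\infty,\infty}$ by pulling it back (via the $1/k$-fellow-travelling) to a late self-intersection of $\alpha$. You instead exploit the \emph{structure} of $\omega^+(\alpha)$: your Steps 1--2 show cleanly, using only Lemma~\ref{lem:continuidad-intersecciones}, that the $\omega^+$-limit projects to a geodesic lamination $\Lambda$ and that $\tilde\alpha|_{[K,\infty)}$ is confined to a single convex complementary region $R$ of $\tilde\Lambda$; then you read the endpoint $\xi$ off the geometry of $R$. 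Your route avoids hyperbolicity of the flow entirely (no closing lemma), at the cost of needing the structure theory of complementary regions of laminations on closed surfaces; the paper's route is more dynamical and avoids laminations at the cost of a somewhat delicate double-limit bookkeeping.

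Steps 1 and 2 are correct and are, as you say, routine upgrades of ``transverse near-approach'' to ``late self-intersection'' via Lemma~\ref{lem:continuidad-intersecciones}. The gap, which you flag yourself, is the final implication: from $\rho\subset R$, $\rho(t)\to\xi$, and $d(\rho(t),\tilde\Lambda)\to 0$, conclude that $\xi$ is an ideal endpoint of a leaf of $\tilde\Lambda$. Your one-line justification (``a ray cannot pass between two distinct boundary leaves while staying near $\partial R$ unless they share an ideal endpoint, possibly after spiralling through a crown onto its axis'') is not a proof, and the crown parenthetical is in fact misleading: if $\Lambda$ has no closed leaves, the closed geodesic onto which a crown's non-closed boundary leaves spiral is \emph{not} a leaf of $\Lambda$, and moreover a ray heading to an endpoint of that axis stays at definite distance from $\tilde\Lambda$, so that case is excluded by your hypothesis $d(\rho(t),\tilde\Lambda)\to 0$, not accommodated by it. The step can be made rigorous: for each $t$ take the nearest boundary leaf $\ell_t$ with endpoints $a_t,b_t$; since $\rho$ never crosses $\ell_t$, $\ell_t$ cannot separate $\rho(0)$ from the horoball neighbourhoods of $\xi$, which forces at most one of $a_t,b_t$ to converge to $\xi$ while the other stays away, so (after passing to a subsequence) $\ell_t$ converges to a nondegenerate geodesic $\ell_\infty$ with one endpoint $\xi$, and $\ell_\infty\in\tilde\Lambda$ by closedness. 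Until something of this sort is written, Step 3 is an assertion rather than an argument.
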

The strategy of the proof is to approximate $\alpha$ by long closed geodesics that fellow travel $\alpha$. Then, we take the limit and prove that the limiting geodesic (which is not necessarily closed) is simple, using that $\alpha$ itself is simple.
\begin{proof}

 Take $(p, v) \in \omega^+(\alpha)$. By definition there exists a sequence $t_n \to +\infty$ such that 
    $$\lim_{n \to \infty}\dot{\alpha}(t_n) = (p, v).$$
 Using the Anosov closing lemma for flows (see the book of Fisher-Hasselblatt \cite{fisher2019hyperbolic}, Theorem 5.3.11), for each $k \in \NN$ there exists $\epsilon_k > 0$ such that if, for $t, s \in \RR$, we have $d(\dot{\alpha}(t), \dot{\alpha}(s)) < \epsilon_k$ then there is a closed geodesic $\gamma$ with $\dot{\gamma} \subseteq \mathcal{N}_{T^1S}(\dot{\alpha}|_{[t, s]}, 1/k)$, the tubular neighborhood of the geodesic in $T^1S$.

 Now, because $\dot{\alpha}(t_n)$ is a Cauchy sequence in $T^1S$, there exists $n_k$ such that for every $n > n_k$ we have that $$d(\dot{\alpha}(t_n), \dot{\alpha}(t_{n_k})) < \epsilon_k.$$

 Thus, for every $n > n_k$, we define $\gamma_{n, k}$ as the closed geodesic that arises from applying the closing lemma to $\alpha|_{[t_{n_k}, t_n]}$. After passing to a subsequence, we may assume that the sequence $\gamma_{n, k}$ converges (in the set of parametrized geodesics of $S$ with the compact open topology) with $n$ to a geodesic which we call $\gamma_{\infty, k}$. 

 We consider $\gamma_{\infty, \infty}$ the limit in $k$ of the sequence $\gamma_{\infty, k}$. Now, let $t \in \RR$ such that $t \in [t_{n_k}, t_n]$ for some $n > n_k$. Then
    $$d(\alpha(t), \gamma_{\infty, k}) \le d(\alpha(t), \gamma_{n, k}) + d(\gamma_{n, k}, \gamma_{\infty, k}).$$

 Taking the limit of the inequality as $t$ goes to $+ \infty$ (and thus considering bigger $n$) we get that
    $$\limsup_{t\to +\infty} d(\alpha(t), \gamma_{\infty, k}) \le 1/k.$$
 Now, let $k(t)$ be the biggest $k$ such that $t_{n_k} < t$. We have that
    $$d(\alpha(t), \gamma_{\infty, \infty}) \le d(\alpha(t), \gamma_{\infty, k(t)}) + d(\gamma_{\infty, k(t)}, \gamma_{\infty, \infty}).$$

 Because $\lim_{t \to +\infty}k(t) = +\infty$ we have
    $$\lim_{t \to \infty} d(\alpha(t), \gamma_{\infty, \infty}) = 0,$$
and hence a lift of $\gamma_{\infty,\infty}$ is asymptotic to $\tilde{\alpha}$.

 In order to conclude the lemma, we need to prove that $\gamma_{\infty, \infty}$ is simple. Assume by contradiction that there exist $t_0, s_0$ such that $\gamma_{\infty, \infty}(t_0) = \gamma_{\infty, \infty}(s_0).$ There exists $\epsilon > 0$ such that, if $\beta$ is a geodesic and $t, s \in \RR$ are such that $d(\dot{\gamma}_{\infty, \infty}(t_0), \dot{\beta}(t)) < \epsilon$ and $d(\dot{\gamma}_{\infty, \infty}(s_0), \dot{\beta}(s)) < \epsilon$, then $\beta$ will have a self intersection.

 Let $k \in \NN$ big enough so $1/k < \epsilon/3$, and so that there exist $t_1$ and $s_1 \in \RR$ with
 $$d(\gamma_{\infty, \infty}(t_0), \gamma_{k, \infty}(t_1)) < \epsilon/3$$ and
  $$d(\gamma_{\infty, \infty}(s_0), \gamma_{k, \infty}(s_1)) < \epsilon/3.$$
  Again, because of the definition of $\gamma_{k, n}$ and $\gamma_{k, \infty}$, for $n$ big enough, we have that for some $t_2, s_2 \in \RR$ we get
   $$d(\gamma_{k, \infty}(t_1), \gamma_{k, n}(t_2)) < \epsilon/3$$ and
  $$d(\gamma_{k, \infty}(s_1), \gamma_{k, n}(s_2)) < \epsilon/3.$$
  Because of the definition of $\gamma_{k, n}$ and unraveling the three inequalities we get that there exist $t_3, s_3 \in \RR$ such that
  $$d(\gamma_{\infty, \infty}(t_0), \alpha(t_3)) < \epsilon$$ and
  $$d(\gamma_{\infty, \infty}(s_0), \alpha(s_3)) < \epsilon,$$
which is a contradiction since $\alpha$ is simple, and thus concludes the proof.
\end{proof}

In other words, the previous lemma implies that, if $\pi: \partial \HH^2 \times \partial \HH^2 \to \partial \HH^2$ denotes the projection onto the first coordinate, and $\mathscr{G}_s \subseteq \mathscr{G}$ denotes the set of simple geodesics, Lemma~\ref{lem:a-la-estable} shows that the set of asymptotically simple geodesics is a subset of
$$\pi(\mathscr{G}_s) \times \pi(\mathscr{G}_s).$$
This observation will be key in proving that the set $\simpends$ has Hausdorff dimension $0$. 

The statement of the following proposition relates the set $\boundangle$ with asymptotically simple geodesics.

\begin{prop}\label{prop:simple-ends}
 Let $\alpha \in \boundangle$. Then there exists a finite cover $p: \tilde{S} \to S$ and a curve $\tilde{\alpha}$ in $\tilde{S}$ such that $p(\tilde{\alpha}) = \alpha$ and $\tilde{\alpha} \in \simpends$.
\end{prop}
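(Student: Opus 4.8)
The plan is to record the self-intersection pattern of $\alpha$ in a finite combinatorial object --- a \emph{band complex} --- and then to separate the resulting self-crossings in a finite cover, using subgroup separability of $\pi_1(S)$. The geometric content of the hypothesis $a(\alpha)\ge\delta$ that drives everything is the following: by Lemma~\ref{lem:continuidad-intersecciones}, any intersection between two geodesics with tangent lifts in $\overline{\dot\alpha}\subseteq T^1S$ is a $C^1$--limit of genuine self-crossings of $\alpha$, hence occurs at angle $\ge\delta$. In particular $\alpha$ cannot wind tightly around a closed geodesic, nor can it cross transversally a sublamination of $\overline{\dot\alpha}$ that fills its supporting subsurface --- either behaviour would produce self-crossings of arbitrarily small angle. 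These two prohibitions, together with the compactness of $S$, are what make the combinatorics finite.

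\emph{Step 1 (the band complex).} First, using Lemma~\ref{lem:continuidad-intersecciones} at a uniform scale $\epsilon_0$ (valid for all angles $\ge\delta$), I would show that the projection of $\overline{\dot\alpha}$ to $S$ looks, inside each ball of radius $\epsilon_0$, like finitely many families of near-parallel near-geodesic arcs, any two of which are either unlinked or cross once, transversally, at angle $\ge\delta$. Gluing these local models over a finite cover of $S$ by $\epsilon_0$--balls yields a finite $2$--complex $W$ --- the band complex, in the spirit of the radallas of Erlandsson--Souto~\cite{erlandsson2016counting} --- built from finitely many \emph{bands} (glued along switches as in a fattened train track) and finitely many \emph{crossing squares} (each recording a transverse crossing of two bands at angle $\ge\delta$), together with a smooth immersion $\phi\colon W\looparrowright S$ carrying $\overline{\dot\alpha}$. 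The forward (resp. backward) end of $\alpha$ eventually traces a monotone edge path that uses only bands of a fixed subcomplex $W^+$ (resp. $W^-$), and the finiteness of $W$ --- the heart of the argument --- is exactly where the two prohibitions above and the compactness of $S$ enter.

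\emph{Step 2 (crossings $\leadsto$ double cosets $\leadsto$ finite cover).} Each crossing square traversed by the forward end of $\alpha$ contributes a family of self-crossings of $\alpha|_{[T,\infty)}$: if the two bands meeting in that square run through recurrent pieces of $W^+$ with (finitely generated, hence quasiconvex) fundamental groups $H$ and $K$ inside $\pi_1(S)$, and $g\in\pi_1(S)$ is the return word joining them, those self-crossings correspond, in $\HH^2$, to intersections between a lift of $\alpha$ and its translates by $HgK$; moreover $g\notin HK$ since the return word genuinely leaves those pieces. As $W^+$ has finitely many crossing squares, \emph{every} self-crossing of $\alpha|_{[T,\infty)}$ is indexed by a finite union $\bigcup_i H_ig_iK_i$ of such double cosets, and symmetrically for $W^-$ and the backward end. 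Since $S$ is a closed hyperbolic surface, $\pi_1(S)$ is LERF and, more precisely, these finitely many double cosets are each separated from the identity by a finite-index subgroup; intersecting, we obtain a finite cover $p\colon\tilde S\to S$ whose group avoids all of them. Let $\tilde\alpha$ be a lift of $\alpha$. No return word indexing a self-crossing of either end of $\alpha$ lies in $\pi_1(\tilde S)$, so $\tilde\alpha$ has no self-crossing with both endpoints far along it, in either time direction; that is, $\tilde\alpha\in\simpends$, while $p(\tilde\alpha)=\alpha$ by construction, which proves the proposition.

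The main obstacle is Step 1 and the bookkeeping of Step 2: one must show that the lower bound $\delta$ on self-intersection angles --- through the prohibitions on tight winding and on transverse crossings of filling sublaminations --- really does force a \emph{finite} band complex faithfully encoding \emph{all} self-crossings of $\alpha$, so that the a priori infinitely many self-crossings near each end collapse onto finitely many double cosets. Granting that finiteness, the passage to a finite cover is a by-now-standard (if delicate) application of separability of surface subgroups, and it is precisely this collapse that explains why a \emph{finite} cover suffices.
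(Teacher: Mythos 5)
Your Step 1 (the band complex encoding $\overline{\dot\alpha}$) matches the paper's Proposition~\ref{prop:band-complex} in spirit, though you defer precisely the finiteness argument, which the paper carries out through Corollary~\ref{cor:finite-Ap}, Lemma~\ref{lem:las-transversales} and Lemma~\ref{lema:paralelos-no-intersectan}. The gap is in Step~2, where you claim that the late self-crossings of $\alpha$ are indexed by finitely many double cosets $H_i g_i K_i$ with $H_i, K_i$ the fundamental groups of ``recurrent pieces of $W^+$'' and $g_i$ a return word satisfying $g_i \notin H_i K_i$. That last condition --- the one double-coset separability actually needs --- fails in the setup you describe. The recurrent subcomplex $W^+$ carrying the forward tail of $\alpha$ is connected (the tail of a single geodesic cannot recur through two disconnected subcomplexes), so $H$ and $K$ must both equal $\pi_1(W^+)$; a loop recording a late self-crossing lies entirely inside $W^+$ and so already represents an element of $\pi_1(W^+) = HK$. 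Thus $e \in HgK$, and no finite-index subgroup of $\pi_1(S)$ can be disjoint from $HgK$. Your assertion that ``the return word genuinely leaves those pieces'' is unsupported and generically false.

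The paper circumvents this by first compressing the recurrent behaviour onto a closed orbit: it applies the Anosov closing lemma to produce a closed geodesic $\gamma$ that $\epsilon$-shadows a long segment of $\alpha$ visiting every singularly recurrent band, and then invokes Scott's Theorem 3.3 from \cite{scott1978subgroups} to pass to a finite cover in which $\gamma$ lifts to a simple closed geodesic $\gamma'$. In that cover, if two lifted recurrent bands met, $\gamma'$ would self-intersect by Proposition~\ref{prop:band-complex}(2), a contradiction; hence the lift of $\alpha$ is asymptotically simple. Scott's theorem is, internally, exactly the separability you wanted to use, but applied to the cyclic subgroup $\langle g_\gamma \rangle$, where the crossing elements $h_0$ of $\gamma$ do satisfy $h_0 \notin \langle g_\gamma \rangle$ and the double cosets $\langle g_\gamma\rangle h_0 \langle g_\gamma\rangle$ miss the identity. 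The Anosov closing lemma is the missing ingredient: one must first reduce $\alpha$'s recurrence to a single periodic orbit before your separability argument can be made to work.
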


To prove this, we begin by studying the set $\overline{\alpha}$, the closure of $\alpha$ as a subset of $S$. This is a geodesically closed set, that is, a union of complete geodesics. In fact, 
$$\overline{\alpha} = \omega^-(\alpha) \cup \alpha \cup \omega^+(\alpha).$$
 When $\alpha \in \boundangle$, the set $\overline{\alpha}$ exhibits structural similarities to geodesic laminations. For an introduction to geodesic laminations, see Harer and Penner's book \cite{penner1992combinatorics} or Thurston's notes \cite{thurston2022geometry}.

\subsection{The closure of $\alpha$.}\label{sec:clausura}

Given a geodesic $\alpha$, we will study its closure in $S$ (denoted by $\overline{\alpha}$) and in the unit tangent bundle $T^1S$ (denoted by $\overline{\dot{\alpha}}$). Note that if $(p, v) \in \overline{\dot{\alpha}}$, then the entire geodesic through $(p, v)$ also lies in $\overline{\dot{\alpha}}$.

As a consequence of Lemma~\ref{lem:continuidad-intersecciones}, if $\alpha \in \boundangle$ for some $\delta > 0$, geodesics in $\overline{\alpha}$ cannot intersect other geodesics in $\overline{\alpha}$ at angles smaller than $\delta$. We formalize this in the following lemma.

\begin{lem}\label{lem:estructura-clausura}
 Let $\alpha \in \boundangle$. Let $p \in S$ be such that there exist $t_1, t_2 \in \RR$ and two geodesics $\alpha_1, \alpha_2 \in \overline{\alpha}$ such that $\alpha_1(t_1) = \alpha_2(t_2) = p$. Then
    $$\angle_p (\alpha_1, \alpha_2) \ge \delta.$$
\end{lem}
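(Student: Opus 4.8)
The plan is to argue by contradiction, turning a small-angle crossing inside $\overline{\alpha}$ into an honest small-angle self-intersection of $\alpha$, which is forbidden since $\alpha\in\boundangle$. Throughout, a geodesic lying in $\overline{\alpha}$ is understood, as in the discussion preceding the lemma, as a complete geodesic whose unit tangent lift is contained in $\overline{\dot\alpha}$. We may assume that $\alpha_1$ and $\alpha_2$ cross transversally at $p$, so that $\theta:=\angle_p(\alpha_1,\alpha_2)>0$, and suppose for contradiction that $\theta<\delta$. Since $\dot\alpha_1(t_1),\dot\alpha_2(t_2)\in\overline{\dot\alpha}$, there are sequences of times $(u_n)$ and $(w_m)$ with $\dot\alpha(u_n)\to\dot\alpha_1(t_1)$ and $\dot\alpha(w_m)\to\dot\alpha_2(t_2)$ in $T^1S$.

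First I would pass to $\HH^2$, because the intersection we want to produce must be a genuine self-intersection of $\alpha$, i.e.\ one coming from a nontrivial element of $\pi_1(S)$, and not merely the trivial fact that the geodesic through $\dot\alpha(u_n)$ and the one through $\dot\alpha(w_m)$ are both $\alpha$ itself. Fix a lift $\tilde\alpha$ of $\alpha$ and a lift $\tilde p$ of $p$, and let $\tilde\alpha_1,\tilde\alpha_2$ be the lifts of $\alpha_1,\alpha_2$ through $\tilde p$; these are distinct geodesics meeting at $\tilde p$ with angle $\theta$. For each $n$ I would choose $g_n\in\pi_1(S)$ with $g_n\dot{\tilde\alpha}(u_n)\to\dot{\tilde\alpha}_1(t_1)$ in $T^1\HH^2$ (this is the usual lift-of-a-convergent-sequence argument for the covering $T^1\HH^2\to T^1S$); by continuity of the endpoint map this gives $g_n\tilde\alpha\to\tilde\alpha_1$ in $\GG$, and likewise I would obtain $h_m\in\pi_1(S)$ with $h_m\tilde\alpha\to\tilde\alpha_2$ in $\GG$. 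Since $\tilde\alpha_1$ and $\tilde\alpha_2$ cross at $\tilde p$, the pairs of endpoints of $\tilde\alpha_1$ and $\tilde\alpha_2$ are linked in $\partial\HH^2$, an open condition on $\GG\times\GG$; so for $n,m$ large the geodesics $g_n\tilde\alpha$ and $h_m\tilde\alpha$ cross, at a point converging to $\tilde p$ and with angle converging to $\theta$, which is the content of Lemma~\ref{lem:continuidad-intersecciones} applied in $\HH^2$. Because the two limits are distinct, $g_n\tilde\alpha\ne h_m\tilde\alpha$ for large $n,m$, hence $g:=g_n^{-1}h_m\ne e$; applying $g_n^{-1}$ we get that $\tilde\alpha$ crosses $g\tilde\alpha$ at angle close to $\theta$. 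As $g\ne e$ acts freely on $\HH^2$, this crossing projects to an honest self-intersection of $\alpha$ in $S$ with angle close to $\theta$; choosing $n,m$ large enough that this angle is $<\delta$ gives $a(\alpha)<\delta$, contradicting $\alpha\in\boundangle$.

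The step I expect to be the crux is exactly the one just isolated: guaranteeing that the crossing produced by perturbation is actually counted in $a(\alpha)$, i.e.\ corresponds to a nontrivial deck transformation, rather than being an artifact of perturbing two strands of the same geodesic. Running the argument in the universal cover with the two distinct translates $g_n\tilde\alpha$ and $h_m\tilde\alpha$ resolves this, the distinctness of the limiting lifts $\tilde\alpha_1\ne\tilde\alpha_2$ being precisely what forces $g_n^{-1}h_m\ne e$. The remaining ingredients — existence of the approximating sequences (from $\dot\alpha_i(t_i)\in\overline{\dot\alpha}$), the choice of lifts $g_n,h_m$, convergence of endpoints, and continuity of the intersection point and angle on the open set of linked pairs — are routine, the last being supplied directly by Lemma~\ref{lem:continuidad-intersecciones}.
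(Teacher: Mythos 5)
Your proof is correct and follows the same strategy as the paper's: approximate $\dot\alpha_1(t_1)$ and $\dot\alpha_2(t_2)$ by tangent vectors $\dot\alpha(u_n),\dot\alpha(w_m)$ along $\alpha$ and invoke Lemma~\ref{lem:continuidad-intersecciones} to produce a nearby self-intersection of angle $<\delta$. You are more careful than the paper's one-paragraph proof in one respect worth noting: you pass to $\HH^2$ and use $\tilde\alpha_1\ne\tilde\alpha_2$ to force $g_n^{-1}h_m\ne e$, thereby verifying explicitly that the perturbed crossing is an honest self-intersection coming from a nontrivial deck transformation rather than two parametrizations of the same strand — a point the paper leaves implicit.
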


\begin{proof}
 Let $\alpha_1, \alpha_2$ be geodesics in $\overline{\alpha}$, and assume there exist $t_1, t_2$ such that $\alpha_1(t_1) = \alpha_2(t_2)$ and $a = \angle (\dot{\alpha_1}(t_1), \dot{\alpha_2}(t_2)) < \delta.$ For every $\epsilon > 0$ there exist $s_1, s_2 \in \RR$ such that 
    $$d\left(\alpha_1(t_1), \alpha(s_1)\right) < \epsilon $$ and $$d\left(\alpha_2(t_2), \alpha(s_2)\right) < \epsilon.$$
 Because of Lemma \ref{lem:continuidad-intersecciones}, this means that we can find self-intersection points in $\alpha$ with an angle arbitrarily close to $a$, which is a contradiction because $\alpha \in \boundangle$.
\end{proof}

A consequence of Lemma \ref{lem:estructura-clausura} is the following result.

\begin{cor}\label{cor:finite-Ap}
 There exists $N \in \NN$, only depending on $\delta$, such that if $\alpha \in \boundangle$ and $p \in \overline{\alpha}$, then the number of tangent vectors of geodesics in $\overline{\alpha}$ through $p$ is bounded by $N$.
\end{cor}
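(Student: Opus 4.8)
The plan is to derive the bound from the angle gap of Lemma~\ref{lem:estructura-clausura} by an elementary packing argument on the circle of directions at $p$. Fix $p \in \overline{\alpha}$ and let $A_p \subseteq T^1_p S$ denote the set of all tangent vectors $\dot{\gamma}(t)$, where $\gamma$ ranges over the complete geodesics contained in $\overline{\alpha}$ and $t$ over the parameters with $\gamma(t) = p$. The goal is to bound $|A_p|$ in terms of $\delta$ alone; I claim $N = 2\lceil \pi/\delta \rceil$ works.

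First I would pass from vectors to lines. The quotient map $q\colon T^1_p S \to T^1_p S / \{\pm 1\}$ onto the space of tangent lines at $p$ is (exactly) two-to-one, so it suffices to bound $|q(A_p)|$. The target is a circle of total length $\pi$ in which the distance between two lines equals the unoriented angle they span, an element of $[0,\pi/2]$.

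Next I would check that $q(A_p)$ is $\delta$-separated. Given two distinct lines $q(v)\ne q(w)$ with $v,w\in A_p$, pick geodesics $\gamma_1,\gamma_2\subseteq\overline{\alpha}$ and parameters $t_1,t_2$ with $\gamma_i(t_i)=p$, $\dot{\gamma_1}(t_1)=v$ and $\dot{\gamma_2}(t_2)=w$; here $\gamma_1$ and $\gamma_2$ are allowed to coincide, meeting $p$ at two different times, which is exactly the configuration permitted by Lemma~\ref{lem:estructura-clausura}. Since $v$ and $w$ span different lines, $\gamma_1$ and $\gamma_2$ cross transversally at $p$, and $d(q(v),q(w))=\angle_p(\gamma_1,\gamma_2)\ge\delta$ by Lemma~\ref{lem:estructura-clausura}. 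Finally, a $\delta$-separated subset of a circle of length $\pi$ has at most $\lceil\pi/\delta\rceil$ elements, since the open arcs of length $\delta$ centered at its points are pairwise disjoint and hence have total length at most $\pi$. Therefore $|A_p|\le 2\,|q(A_p)|\le 2\lceil\pi/\delta\rceil$, a constant depending only on $\delta$.

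I do not expect a genuine obstacle here: the corollary is essentially geometric pigeonhole once Lemma~\ref{lem:estructura-clausura} is available. The only point requiring mild care is the two-to-one reduction to lines, which is what keeps a geodesic and its orientation reversal — contributing the opposite tangent vectors $\pm v$ at $p$ — from being counted as a spurious violation of the angle gap.
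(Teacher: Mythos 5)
Your proof is correct and follows essentially the same route as the paper: both deduce the bound from the angle gap of Lemma~\ref{lem:estructura-clausura} by an elementary packing argument on a circle of directions at $p$, and the two constants ($2\lceil\pi/\delta\rceil$ versus the paper's $\lfloor 2\pi/\delta\rfloor+1$) are of the same order. The one small thing worth noting is that the two-to-one reduction to lines, which you present as the delicate point, is not actually forced: if one works directly on the circle of unit vectors $T^1_pS$ (circumference $2\pi$), then a pair $\pm v$ sits at distance $\pi\ge\delta$, so antipodal vectors are automatically $\delta$-separated there, and Lemma~\ref{lem:estructura-clausura} handles the remaining (transversal) pairs; this is the paper's tacit framing. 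Your reduction to lines is a perfectly valid alternative bookkeeping. (Also, a $\delta$-separated set in a circle of length $\pi$ has at most $\lfloor\pi/\delta\rfloor$ points; your $\lceil\pi/\delta\rceil$ is a harmless overestimate since only some finite $N=N(\delta)$ is needed.)
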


\begin{proof}
 Because the angles of vectors in $\overline{\dot{\alpha}}$ at a point $p$ need to be $\delta$-separated, it is enough to take $N = \lfloor 2\pi/\delta\rfloor + 1$.
\end{proof}

Due to the ergodicity of the geodesic flow and Lemma~\ref{lem:continuidad-intersecciones}, we also get the following corollary.

\begin{cor}\label{cor:empty-interior}
    $\overline{\dot{\alpha}}$ and $\overline{\alpha}$ have empty interior in $T^1S$ and $S$ respectively.
\end{cor}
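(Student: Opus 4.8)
The plan is to argue by contradiction in each case, using that $\overline{\dot\alpha}\subseteq T^1S$ is closed and invariant under the geodesic flow, together with Corollary~\ref{cor:finite-Ap} and Lemma~\ref{lem:continuidad-intersecciones}. The two assertions call for slightly different arguments: the one in $T^1S$ is a short ergodicity argument, while the one in $S$ requires analysing the local structure of $\overline{\dot\alpha}$ over the putative interior.

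For $\overline{\dot\alpha}$: suppose it had nonempty interior $V$. Since $\overline{\dot\alpha}$ is flow--invariant and closed, $V$ is open and flow--invariant, hence of positive Liouville measure; by ergodicity of the geodesic flow $V$ has full measure, and since $V\subseteq\overline{\dot\alpha}$ is closed while the Liouville measure has full support, this forces $\overline{\dot\alpha}=T^1S$. Thus $\dot\alpha$ is dense in $T^1S$. Now fix $(p,v),(p,v')\in T^1S$ with $0<\angle(v,v')<\delta$, and apply Lemma~\ref{lem:continuidad-intersecciones} with $\epsilon_2$ small enough that $\angle(v,v')+\epsilon_2<\delta$, getting some $\eta>0$. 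By density there are times $t_1\neq t_2$ with $\dot\alpha(t_1)\in B_{T^1S}((p,v),\eta)$ and $\dot\alpha(t_2)\in B_{T^1S}((p,v'),\eta)$; applying the lemma with $\alpha_1=\alpha_2=\alpha$ produces a self--intersection of $\alpha$ at a point near $p$ of angle strictly less than $\delta$, contradicting $\alpha\in\boundangle$. (Alternatively, $\overline{\dot\alpha}=T^1S$ already contradicts Corollary~\ref{cor:finite-Ap}.)

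For $\overline\alpha=\pi(\overline{\dot\alpha})\subseteq S$: suppose it has nonempty interior $U$, and recall $\alpha(\RR)$ is dense in $\overline\alpha$, hence in $U$. For $q\in\overline\alpha$ put $A_q=\{v:(q,v)\in\overline{\dot\alpha}\}$: it is nonempty (take a limit of $\dot\alpha(s_n)$ along a sequence $\alpha(s_n)\to q$), of size at most $N$ by Corollary~\ref{cor:finite-Ap}, and $\delta$--separated by Lemma~\ref{lem:estructura-clausura}. Since $\overline{\dot\alpha}$ is closed and the $A_q$ are uniformly $\delta$--separated, $q\mapsto|A_q|$ is upper semicontinuous on $U$, so the locus where it attains its minimum value $k_0\ge 1$ is open; shrinking, I obtain a ball $B'\subseteq U$ on which $|A_q|\equiv k_0$, the $k_0$ directions being cut out by continuous sections $v_1,\dots,v_{k_0}$ of the unit tangent bundle over $B'$. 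Each $v_i$ is flow--invariant over $B'$ (the $k_0$ directions stay $\delta$--apart along the flow, so the labelling cannot be permuted for short times), hence the arcs ``geodesic through $(q,v_i(q))$'', restricted to $B'$, are pairwise disjoint in $B'$ — two meeting there would give two $\overline{\dot\alpha}$--directions with the same label at the meeting point — and for each fixed $i$ they foliate $B'$ by geodesic arcs whose extensions are complete geodesics contained in $\overline\alpha$.

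Extracting a contradiction from this foliated chart is the step I expect to be the main obstacle. Since $\alpha$ is dense in $B'$ and its tangent directions over $B'$ cluster near $v_1,\dots,v_{k_0}$, the geodesic $\alpha$ enters $B'$ in infinitely many sub-arcs, grouped into $k_0$ families of nearly parallel chords; within any one family the chords must be pairwise non-crossing inside $B'$, since a crossing would be at angle close to $0<\delta$, impossible for $\alpha\in\boundangle$. Thus $\alpha\cap B'$ is a union of $k_0$ nearly-parallel disjoint families whose closures cover the two--dimensional set $B'$, forcing one family to have positive transverse measure; extending its chords to complete geodesics inside $\overline\alpha$ should then be incompatible with $\alpha\in\boundangle$ — morally $\overline\alpha$ would contain a positive--area, geodesically foliated set, impossible by exponential divergence of geodesics on a hyperbolic surface, or, equivalently, one shows these leaves are essentially simple (their only self-intersections occur, by Lemma~\ref{lem:estructura-clausura}, at angle $\ge\delta$, a constraint strong enough to preclude them) and invokes the Birman--Series theorem \cite{birman1985geodesics} that the union of simple complete geodesics has zero area in $S$. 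Turning this ``lamination with possibly far-away transverse crossings'' into a genuinely nowhere dense set is the delicate point, and is precisely where Lemma~\ref{lem:estructura-clausura} does the work.
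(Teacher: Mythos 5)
Your argument for the empty interior of $\overline{\dot{\alpha}}$ in $T^1S$ is correct and matches the paper, which simply cites ergodicity (your additional use of Lemma~\ref{lem:continuidad-intersecciones} or Corollary~\ref{cor:finite-Ap} is a valid way to make the contradiction explicit). The problem is the second half, where you yourself flag the gap: after building the $k_0$ continuous sections over the ball $B'$, you never extract the contradiction. Neither of your two proposed endgames is available. The Birman--Series zero-area theorem applies to \emph{simple} geodesics, but the leaves you produce are complete geodesics in $\overline{\alpha}$, which are only guaranteed to have self-intersection angles $\ge\delta$ (Lemma~\ref{lem:estructura-clausura}); that constraint does not make them simple, and showing they can be made simple is essentially the content of Proposition~\ref{prop:simple-ends}, not something you may assume here. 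The ``exponential divergence forbids a positive-area geodesically foliated set'' claim is false as stated --- all of $S$ is a union of complete geodesics --- so it cannot carry the argument without being made precise.

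The paper closes the gap with a different and decisive step that you are missing: given the continuous section $s$ over a geodesic sub-segment $L'$, lift to $\HH^2$ and consider the two endpoint maps $f_s,f_u\colon\tilde L'\to\partial\HH^2$ sending $p$ to $\alpha_{p,s(p)}(\pm\infty)$. These are continuous on a connected set, and at least one image must contain an interval of $\partial\HH^2$. By ergodicity of the geodesic flow, Lebesgue-a.e.\ endpoint is that of a forward-dense geodesic, so one of the section geodesics is forward dense; since it lies in $\overline{\dot{\alpha}}$ (geodesic invariance of the closure), this would force $\overline{\dot{\alpha}}=T^1S$, contradicting the first half of the corollary (or Corollary~\ref{cor:finite-Ap} directly). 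This boundary-endpoint argument is the precise version of the ``exponential divergence'' heuristic you gesture at, and it is the ingredient your proposal lacks.
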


\begin{proof}
 The empty interior on $T^1S$ follows from the ergodicity of the geodesic flow (see the book of Fisher-Hasselblatt \cite{fisher2019hyperbolic}).

 In order to prove that $\overline{\alpha}$ has empty interior on $S$, assume by contradiction that there exists an open set $V \subseteq S$ contained in $\overline{\alpha}$. Take a geodesic segment $L$ in $V$. By Corollary~\ref{cor:finite-Ap}, for every $p \in \overline{\alpha}$ the set 
    $$\{ v \in T^1_pS: (p, v) \in \overline{\dot{\alpha}}\}$$ is finite, which means that there exists a subsegment $L' \subseteq L$ and a continuous section $s: L' \to \overline{\dot{\alpha}}$.

 Consider a lift $\tilde{L}'$ to $\HH^2$ (see Figure \ref{fig:interior-vacio}), and a section $s: \tilde{L}' \to T^1\HH^2$ that factors the section in $S$. Consider also, for a point $(p, v) \in T^1\HH^2$, the geodesic $\alpha_{p, v}$ through $(p,v)$. Then the maps
    $$
    \begin{aligned}
 f_s: \tilde{L'} &\to \partial \HH^2\\
 (p, v) &\mapsto \alpha_{p,s(p)}(+\infty)
    \end{aligned}
    $$
 and
    $$
    \begin{aligned}
 f_u: \tilde{L'} &\to \partial \HH^2\\
 (p, v) &\mapsto \alpha_{p,s(p)}(-\infty)
    \end{aligned}
    $$
are well-defined and continuous. Because the image of a connected set is connected, either $f_s(\tilde{L}')$ or $f_u(\tilde{L}')$ contains an open set of $\partial\HH^2$.
 Assume, without loss of generality, that $f_s(L')$ contains an open set. By the ergodicity of the geodesic flow, $L'$ must contain a forward dense geodesic, but this is a contradiction because $L' \subseteq \overline{\alpha}$.
\end{proof} 

\begin{figure}[h]
    \centering
    \def\svgwidth{0.7\columnwidth}
    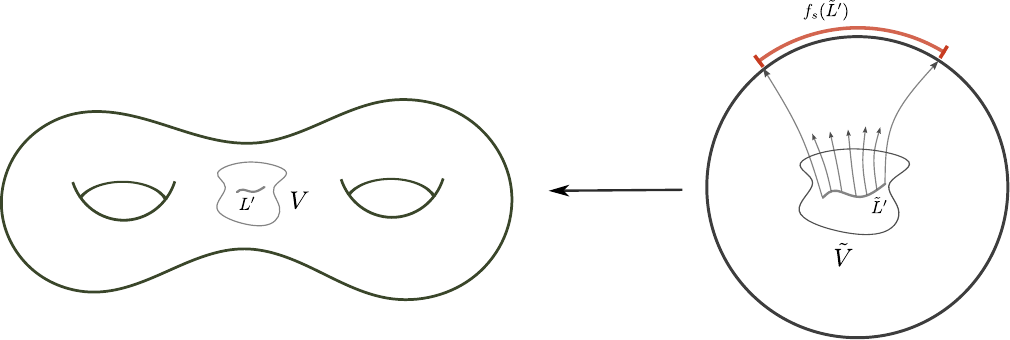
    \caption{If we take a segment $\tilde{L}'$ in $\HH^2$ (right), and a section of the segment, then either $f_u(\tilde{L'})$ or $f_s(\tilde{L'})$ will contain an interval on the boundary. Thus, one of the geodesics whose tangent belongs to the image of the section must be dense. See Corollary \ref{cor:empty-interior}.}
    \label{fig:interior-vacio}
  \end{figure}

\subsection{Band complex and thickenings of geodesics.}

Consider a geodesic in $\boundangle$ for some $\delta > 0$. By Corollary \ref{cor:empty-interior} we know that its closure has empty interior. We will study a subsurface that encompasses the information of self-intersections in the closure: a band complex. In the literature, band complexes are used to build a bridge between train tracks and simple geodesics, however in Erlandsson-Souto's paper \cite{erlandsson2016counting} this idea is generalized to give information about geodesics with self-intersections. In this paper we slightly adapt the definitions of the book to fit our interests.

\begin{defi}
    A \textit{band} is a rectangle $[0, a] \times [0, b]$, foliated by the vertical segments $\{t\} \times [0, b]$. A \textit{band complex} is a space $X$ obtained by gluing finitely many bands together along the vertical sides in such a way that the boundary of $X$ consists of the horizontal boundaries of the bands. Observe that this yields a well-defined foliation $\mathcal{F}_X$ on $X$. The foliation is called \textit{vertical foliation.}

    If $X$ is obtained by gluing finitely many sides and there are vertical boundary components we say that $X$ is an \textit{incomplete} band complex.
\end{defi}

\begin{figure}[h]
    \centering
    \def\svgwidth{0.4\columnwidth}
    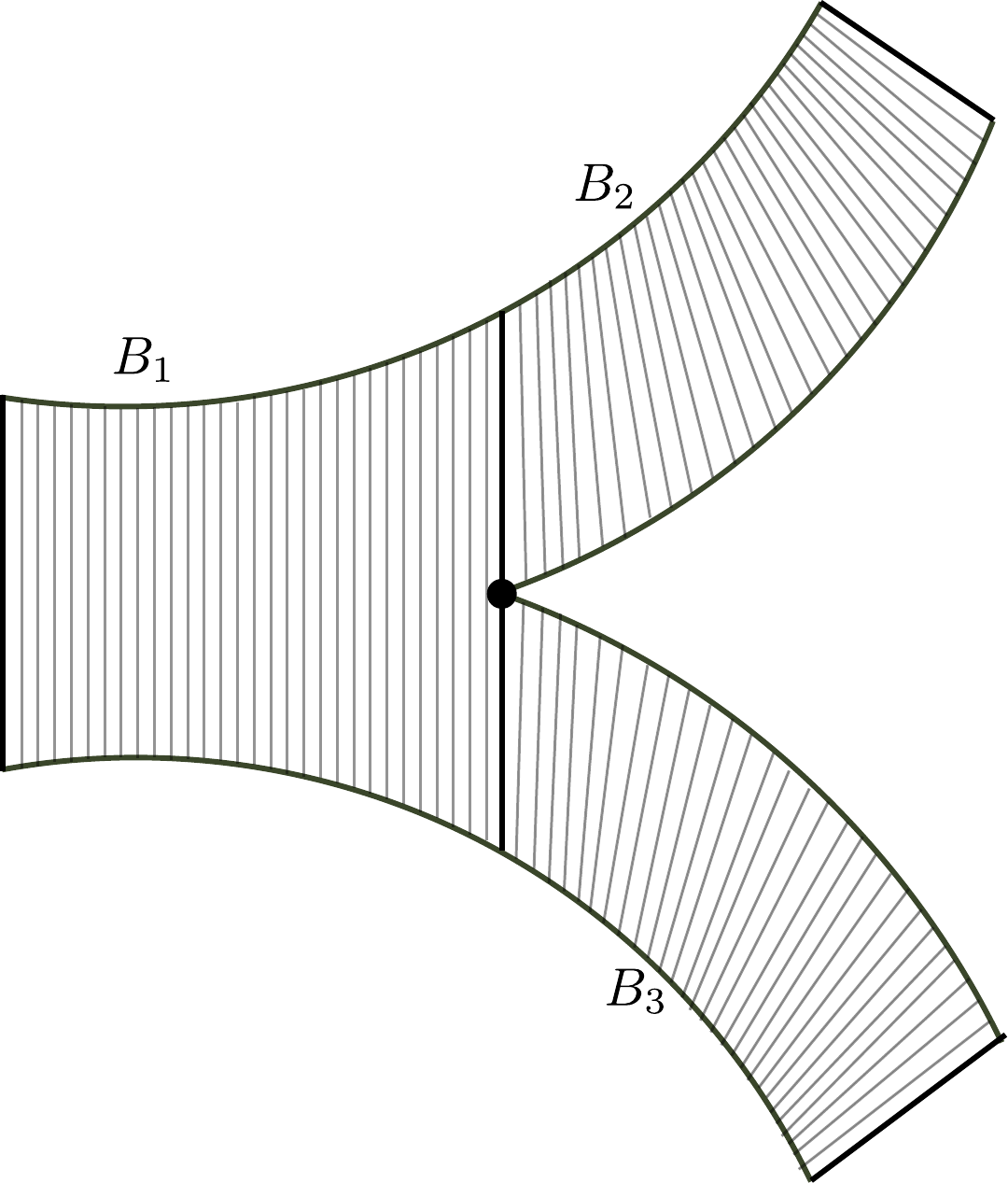
    \caption{An example of an incomplete band complex with their respective vertical foliations. Observe that several bands can be glued in the same vertical side. }
    \label{fig:banda}
  \end{figure}

Observe that a band complex is a compact space. In Figure \ref{fig:banda} we can see an example of an incomplete band complex. Also inspired by the same book, we adapt the definition of the \textit{thickening of a radalla} to give a definition of the \textit{thickening of (sets of) geodesics.}

\begin{defi}\label{def:thickening}
    Let $\alpha$ be a geodesic (not necessarily closed) or a set of geodesics. A \textit{thickening of $\alpha$} is a band complex $X$, together with an immersion $i: X \to S$ and a curve $\tilde{\alpha}$ in $X$ such that:
    \begin{enumerate}
        \item $i(\tilde{\alpha}) = \alpha$
        \item $\tilde{\alpha}$ is transversal to the vertical foliation.
        \item $i(X) \subseteq B(\epsilon, \alpha)$.
    \end{enumerate}
    If $(i, X)$ is a thickening of $\alpha$ we say that $\alpha$ is \emph{carried by} $(i, X)$.
\end{defi}

In Figure \ref{fig:immersed}, we see a possible intersection between two bands in a band complex. Intersections are not necessarily square-shaped, they can be more complicated. For a depiction of a more complicated intersection, see Figure \ref{fig:thickening}.

\begin{figure}[h]
    \centering
    \def\svgwidth{0.8\columnwidth}
    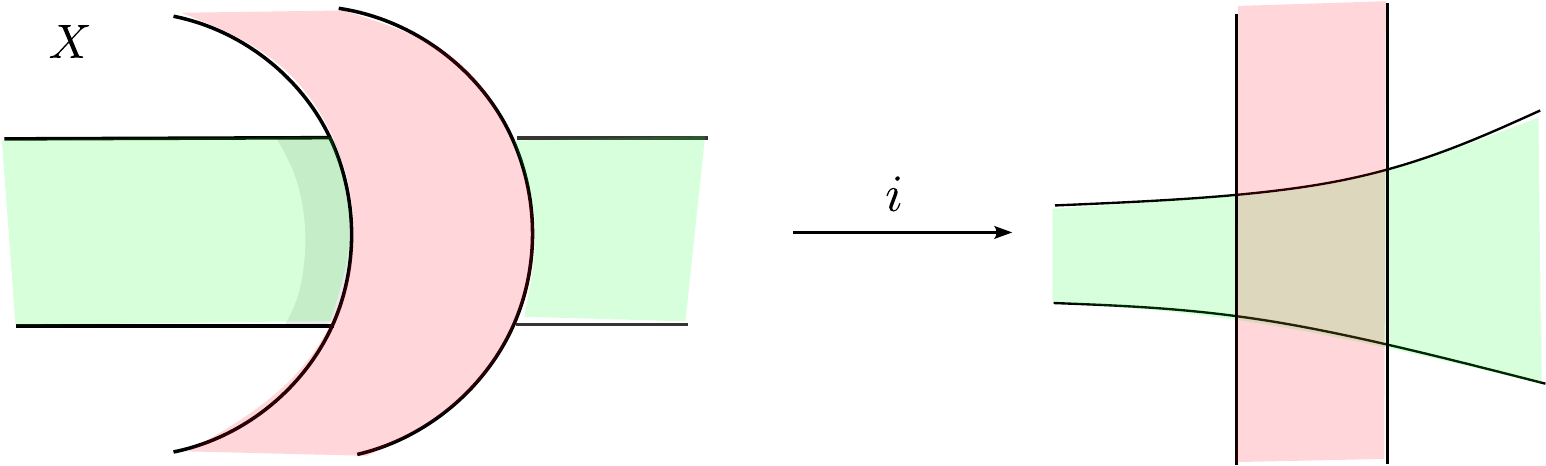
    \caption{Two bands in the band complex $X$ such that their image under the immersion $i$ intersects.}
    \label{fig:immersed}
  \end{figure}

Instead of constructing a thickening of $\alpha$, by using the results of Section \ref{sec:clausura} we will construct a thickening of $\overline{\alpha}$. The following remark will be useful to lift thickenings to covers of $S$.

\begin{rmk} \label{rmk:cover-of-thickening}
    For $\epsilon$ smaller than half the systole, the following holds: if $(i, X)$ is a thickening of a geodesic $\alpha \subset S$, with image contained in the $\epsilon$-neighborhood of $\alpha$, and $p: S' \to S$ is a finite covering map such that $\alpha' \subset S'$ is a geodesic satisfying $p(\alpha') = \alpha$, then there exists a thickening $(i', X)$ of $\alpha'$ such that $p \circ i' = i$.
\end{rmk}

\begin{proof}
    For such an $\epsilon$, if $x \in S$, then $p^{-1}(B(x, \epsilon))$ is a union of disjoint balls around lifts of $x$. Thus, the $\epsilon$ neighborhood of $\alpha$ lifts to an $\epsilon$-neighborhood of $\alpha'$, and thus we can lift the map $i$ to $S'$.
\end{proof}

The main tool of this section is the following proposition, that finds a thickening that carries the intersection information of a geodesic $\alpha \in \boundangle$.

\begin{prop}\label{prop:band-complex}
    Let $\alpha \in \boundangle$. Then for sufficiently small $\epsilon > 0$, the tubular neighborhood $\mathcal{N}(\alpha, \epsilon)$ can be given the structure of a band complex $(i, X)$, and is thus a thickening of $\alpha$. Moreover, the following properties hold:

    \begin{enumerate}
        \item Intersections of $\overline{\alpha}$ cannot happen within the same band.
        \item If two geodesic arcs $\gamma_1, \gamma_2 \subseteq \overline{\alpha}$ intersect in bands $B_1, B_2$, then every geodesic carried by $X$ that intersects $i(B_1)$ and $i(B_2)$ has a self-intersection.
    \end{enumerate}
\end{prop}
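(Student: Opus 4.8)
The plan is to realise a thin tubular neighbourhood of $\overline{\alpha}$ as a finite union of embedded rectangular ``flow boxes'', one for each local direction of $\overline{\alpha}$ near each of finitely many base points, and to assemble these into a band complex $(i,X)$ in the sense of Definition~\ref{def:thickening} with $i(X)=\mathcal N(\alpha,\epsilon)$ --- this is the ``thicken a radalla'' construction of Erlandsson--Souto adapted here. Property~(1) will follow from the near-parallelism of the arcs of $\overline{\alpha}$ inside a single band together with Lemma~\ref{lem:estructura-clausura}; property~(2) from the fact that a geodesic carried by $X$ that runs through two transversely-situated bands is forced, inside each band, to travel almost parallel to the corresponding arc of $\overline{\alpha}$, hence passes twice near their intersection point in directions that stay at angle bounded below, which produces a genuine self-intersection via Lemma~\ref{lem:continuidad-intersecciones}.

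\emph{Local structure.} Fix $p\in\overline{\alpha}$. By Corollary~\ref{cor:finite-Ap} the directions at $p$ of geodesics in $\overline{\alpha}$ form a finite, $\delta$-separated set $\{v_1^p,\dots,v_{k_p}^p\}$ with $k_p\le N$. Since $\overline{\dot\alpha}$ is closed in $T^1S$, a compactness argument produces $r_p>0$ with $\overline{\dot\alpha}\cap\pi^{-1}(\overline{B(p,r_p)})\subseteq\bigcup_i B_{T^1S}\bigl((p,v_i^p),\delta/100\bigr)$; shrinking $r_p$ so that directions vary by less than $\delta/2$ along any geodesic arc of length $2r_p$, every connected arc of $\overline{\alpha}$ meeting $B(p,r_p)$ has direction within $\delta/100$ of a single $v_i^p$ along its whole length. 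This splits $\overline{\alpha}\cap B(p,r_p)$ into at most $N$ \emph{direction classes}.

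\emph{Bands and assembly.} By compactness of $\overline{\alpha}$ finitely many balls $B(p_j,r_{p_j}/2)$, $j=1,\dots,m$, cover $\overline{\alpha}$; put $\rho=\min_j r_{p_j}/2>0$ and fix $\epsilon$ smaller than $\rho/10$ and than half the systole. Inside $B(p_j,r_{p_j})$, for each direction class $i$ let the band $B_i^{\,j}$ be the $\epsilon$-neighbourhood in $S$ of the class-$i$ arcs of $\overline{\alpha}$ that meet $B(p_j,r_{p_j}/2)$; for $\epsilon$ small this is an embedded rectangle, foliated by the short $\epsilon$-segments across it (its vertical leaves), and the class-$i$ arcs traverse it transversally to the foliation, from one horizontal boundary to the other. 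Exactly as when building a tie neighbourhood of a train track, the finitely many rectangles $B_i^{\,j}$ can be glued along vertical segments into an abstract band complex $X$ with a tautological immersion $i\colon X\to S$: same-direction-class rectangles from overlapping balls are straightened and glued along a common vertical leaf, while rectangles whose directions differ by at least $\delta/2$ are left unglued, so that their images merely cross in $S$. Since $d(\,\cdot\,,\alpha)=d(\,\cdot\,,\overline{\alpha})$ and, for $q\in\overline{\alpha}$, the ball $B(q,\epsilon)$ lies in the band through $q$, the image $i(X)$ is exactly $\mathcal N(\alpha,\epsilon)$; and $\alpha\subseteq\overline{\alpha}$ lifts to a curve $\tilde\alpha$ in $X$ transverse to $\mathcal F_X$ with $i(\tilde\alpha)=\alpha$, so $(i,X)$ is a thickening of $\alpha$ (condition~(3) of Definition~\ref{def:thickening} holds since $\epsilon$ is small).

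\emph{The two properties, and the main difficulty.} For~(1): if arcs $\gamma_1,\gamma_2\subseteq\overline{\alpha}$ both lay in the image of a single band, their directions would be within $\delta/25$ of each other, so any intersection of $\gamma_1$ with $\gamma_2$ (including a self-intersection of a single arc) would have angle $<\delta$, contradicting Lemma~\ref{lem:estructura-clausura}. For~(2): suppose $\gamma_1,\gamma_2\subseteq\overline{\alpha}$ meet at $x$ with $x\in i(B_1)\cap i(B_2)$, $\gamma_1$ traversing $B_1$ and $\gamma_2$ traversing $B_2$, and let $\beta$ be a geodesic carried by $X$ whose lift runs through $B_1$ and through $B_2$. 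Being transverse to $\mathcal F_X$, inside $B_1$ the lift crosses from one horizontal boundary to the other, so $\beta$'s direction on $i(B_1)$ is within a small multiple of $\delta$ of $\dot\gamma_1(x)$, and since $\mathrm{diam}\,i(B_1)\le 2\rho$, $\beta$ passes within $2\rho$ of $x$ with that direction; likewise for $B_2$. As $\angle(\dot\gamma_1(x),\dot\gamma_2(x))\ge\delta$ (Lemma~\ref{lem:estructura-clausura}), these are two distinct sub-arcs of $\beta$; choosing the angular tolerances in the construction a small enough multiple of $\delta$ and $\rho$ small enough relative to the constant of Lemma~\ref{lem:continuidad-intersecciones} for angle $\ge\delta$, that lemma applied to the intersecting pair $\gamma_1,\gamma_2$ and these two sub-arcs of $\beta$ forces the sub-arcs to intersect, i.e.\ $\beta$ self-intersects. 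The main obstacle is the assembly step: coherently choosing the rectangles, their widths and the gluings so that same-direction overlaps are glued (after straightening) along single vertical leaves, genuinely different directions yield clean transverse crossings, completing the union to $\mathcal N(\alpha,\epsilon)$ introduces no spurious vertical boundary, and the resulting object satisfies the formal definition of a band complex --- the analogue of constructing a tie neighbourhood of a train track, complicated here by the presence of boundedly many, boundedly transverse crossings. Everything else is soft once the radii are taken small relative to $\delta$ and the injectivity radius.
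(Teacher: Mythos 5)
Your proposal follows essentially the same outline as the paper's proof: use Corollary~\ref{cor:finite-Ap} to split $\overline{\alpha}$ near each point into finitely many local direction classes, cover $\overline{\alpha}$ by finitely many such charts, build one band per direction class per chart, glue, and deduce property~(1) from Lemma~\ref{lem:estructura-clausura} and property~(2) from Lemma~\ref{lem:continuidad-intersecciones}. You correctly identify the assembly step as ``the main obstacle,'' but then leave it at that, and that is precisely where the paper does something concrete that your sketch omits.

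The paper's device is the family $\mathcal{I}$ of $\eta$-perpendicular transversal arcs produced by Lemma~\ref{lem:las-transversales} with $\eta<\delta/6$. Rather than taking $\epsilon$-neighbourhoods of direction-class arcs and gluing them ``along common vertical leaves after straightening'' (a step you do not make precise, and which threatens to create spurious vertical boundary or ambiguous gluings where charts overlap obliquely), the paper first perturbs the transversals so that no two of them meet at a point of $\overline{\alpha}$ and none of their endpoints lies on $\overline{\alpha}$ (Remark~\ref{rmk:perturbacion-transversales}). These transversals then serve as canonical vertical sides: a band is the $\epsilon$-neighbourhood of the subfamily $\alpha_{ij}$ of arcs of $\overline{\alpha}$ running from $I_i$ to $I_j$ without crossing any other transversal, so the vertical boundaries are literally subsegments of the $I\in\mathcal{I}$ (together with a few geodesic segments through singularities of the boundary of the neighbourhood). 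Lemma~\ref{lema:paralelos-no-intersectan}, not just Lemma~\ref{lem:estructura-clausura}, is what guarantees that arcs $\eta$-perpendicular to a common transversal cannot cross; this is used both to make each $\alpha_{ij}$ a disjoint closed family of compact arcs --- hence one whose small neighbourhood really is a foliated rectangle --- and to give property~(1). Without an explicit, globally consistent choice of vertical sides such as this, the assertion that the finitely many flow boxes assemble into a band complex satisfying the definition, with image exactly $\mathcal{N}(\alpha,\epsilon)$ and no leftover vertical boundary, is unsubstantiated; so I would treat the assembly as a genuine gap in your argument rather than a soft detail, and point you to the role of $\mathcal{I}$ and Lemma~\ref{lema:paralelos-no-intersectan} to close it.
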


In order to construct a band structure for the tubular neighborhood of $\overline{\alpha}$, we need to identify the directions along which to build the bands. Assuming the geodesic lies in $\boundangle$, we can detect ``parallel'' directions by locating arcs that are nearly perpendicular to segments of the geodesic. The following notion of approximate perpendicularity will be useful in describing the geometry of the tubular neighborhood and constructing the bands.

\begin{defi}
    Let $\gamma_1, \gamma_2$ be two embedded arcs in $S$ such that there exists $t \in \RR$ with $\gamma_1(t) = \gamma_2(t)$. We say that $\gamma_1$ and $\gamma_2$ are \textit{$\eta$-perpendicular} if
    $$|\angle(\dot{\gamma}_1(t), \dot{\gamma}_2(t)) - \pi/2| < \eta.$$
\end{defi}

The following lemma shows that, locally, one can identify a finite collection of directions that geodesics in $\boundangle$ tend to follow.

\begin{lem}\label{lem:las-transversales}
    Let $\alpha \in \boundangle$. For $\eta > 0$ small enough (only depending on $\delta$) and for every point $p \in \overline{\alpha}$ there exists a neighborhood $U$ of $p$ and a finite collection of geodesic arcs $\mathcal{I}$ such that the following holds: for every arc $\gamma$ in $\overline{\alpha} \cap U$, there exists a unique $I \in \mathcal{I}$ such that $\gamma$ and $I$ are $\eta$-perpendicular.
\end{lem}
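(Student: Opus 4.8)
The plan is to build the collection $\mathcal I$ explicitly, with one transversal for each ``parallel direction'' available at $p$, and to use the rigidity forced by $\alpha\in\boundangle$ to keep those directions under control.

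First I would pin down the directions at $p$. Since $p\in\overline{\alpha}$, the fibre $\overline{\dot{\alpha}}\cap T^1_pS$ is nonempty, and by Corollary~\ref{cor:finite-Ap} it is finite, say $\overline{\dot{\alpha}}\cap T^1_pS=\{(p,v_1),\dots,(p,v_m)\}$ with $m\le N$. By Lemma~\ref{lem:estructura-clausura}, two of these vectors span an angle $\ge\delta$ unless they are antipodal, so the distinct geodesic lines $\ell_1,\dots,\ell_k$ through $p$ tangent to $\overline{\dot{\alpha}}$ are pairwise at angle at least $\delta$. I would then fix $\eta=\delta/3$ (any value $<\delta/2$ would do); this depends only on $\delta$, as required.

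Next I would localize by compactness. Since $\overline{\dot{\alpha}}$ is closed in the compact space $T^1S$ and $\bigcap_{r>0}\{(q,w)\in\overline{\dot{\alpha}}:d(q,p)\le r\}=\overline{\dot{\alpha}}\cap T^1_pS$, there is $r_0>0$ such that every $(q,w)\in\overline{\dot{\alpha}}$ with $d(q,p)\le r_0$ satisfies $\angle(w,\pm v_i)<\eta/3$ for some $i$; shrinking $r_0$ further I may also assume that geodesics inside $B(p,r_0)$ are $C^1$-close to Euclidean straight lines in the exponential chart at $p$. I would then set $U=B(p,r_0/4)$ and, for each line $\ell_i$, take $I_i$ to be the geodesic segment through $p$ orthogonal to $\ell_i$ of half-length $r_0$, so $\mathcal I=\{I_1,\dots,I_k\}$ is finite. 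For existence, let $\gamma$ be an arc of $\overline{\alpha}\cap U$ traversing $U$. Each of its points lies in $B(p,r_0)$, so its tangent direction belongs to the disjoint union $\bigsqcup_i\bigl(B(v_i,\eta/3)\sqcup B(-v_i,\eta/3)\bigr)$ (disjoint because $\eta<\delta$), and by connectedness $\dot\gamma$ stays within $\eta/3$ of a single line $\ell_i$. Hence $\gamma$ crosses $U$ almost parallel to $\ell_i$, whereas $I_i$ is a long segment orthogonal to $\ell_i$ through $p$; the elementary planar geometry in the chart then forces $\gamma$ to meet $I_i$ at some point $q$ with $\angle(\dot\gamma(q),\dot I_i(q))=\pi/2\pm\eta/3\pm O(r_0)$, so that $\gamma$ and $I_i$ are $\eta$-perpendicular once $r_0$ is small enough.

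For uniqueness I would argue that if $\gamma$ were $\eta$-perpendicular both to $I_i$ and to $I_j$ with $\ell_i\ne\ell_j$, then at the two intersection points $\dot\gamma$ lies within $\eta+O(r_0)$ of $\ell_i$ and of $\ell_j$ respectively; since $\dot\gamma$ varies by only $O(r_0)$ across $U$, the lines $\ell_i$ and $\ell_j$ would lie within $2\eta+O(r_0)<\delta$ of each other, contradicting the first step. I expect the genuinely delicate point to be the existence assertion: one must check that the arcs $\gamma$ in play really do traverse $U$ (so that they are long enough to reach $I_i$, rather than grazing $\partial U$ in a tiny sub-arc), and that all of the above geometric estimates go through with a \emph{single} $\eta$ depending only on $\delta$ — which is precisely what the compactness step and the smallness of $r_0$ are there to guarantee.
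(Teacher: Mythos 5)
Your proof is correct and follows essentially the same route as the paper's: both build the transversals $\mathcal I$ perpendicular to the finitely many $\delta$-separated directions of $\overline{\dot\alpha}$ at $p$ (finiteness from Corollary~\ref{cor:finite-Ap}, separation from Lemma~\ref{lem:estructura-clausura}), use a two-scale trick — $U$ strictly inside the ball on which the $I_i$ are diameters — to force every arc of $\overline{\alpha}\cap U$ to actually cross one $I_i$, and get uniqueness from the $\delta$-separation once $\eta$ is small relative to $\delta$. The ``delicate point'' you flag about arcs grazing $\partial U$ is not a gap: a geodesic line meets a small metric ball in at most one chord (both are circles in a conformal model), so each connected arc of $\overline{\alpha}\cap U$ runs from boundary to boundary and, being nearly orthogonal to the long diameter $I_i$, must intersect it; the paper's only cosmetic differences are using $\eta<\delta/10$ (leaving slack for Lemma~\ref{lema:paralelos-no-intersectan}) and invoking Lemma~\ref{lem:continuidad-intersecciones} in place of your explicit compactness estimate.
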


\begin{proof}
    Let $p \in S$ and consider $\eta < \delta/10$. Let $$A_p = \{v \in T^1_pS: (p, v) \in \overline{\dot{\alpha}}\}.$$
    Because of Corollary \ref{cor:finite-Ap} it is finite set. Take $V = B(p, \epsilon)$, for  $\epsilon$ small. For each $(p, v)$ in $A_p$, let $I_v$ be the geodesic diameter of $V$ whose tangent at $p$ is perpendicular to $(p, v)$. By Lemma \ref{lem:continuidad-intersecciones}, there exists $\epsilon_2$ small enough such that every geodesic arc $\gamma \in \overline{\alpha} \cap B(p, \epsilon_2)$ will be $\eta$-perpendicular to one of the $I_v$. Because $\eta < \delta/10$, none of the arcs of $\overline{\alpha} \cap U$ can be $\epsilon$-perpendicular to two arcs $I_v$, $I_w$. The uniform bound comes from the compactness of $S$.
\end{proof}

\begin{rmk}\label{rmk:perturbacion-transversales}
    In the proof of Lemma \ref{lem:las-transversales}, the arcs in $\mathcal{I}$ were chosen to be perpendicular to each tangent vector $(p, v) \in A_p$. However, the construction is not unique: if we slightly rotate each arc around the point $p$, we obtain a different collection $\mathcal{I}$ that still satisfies the same $\eta$-perpendicularity condition.
\end{rmk}

By a continuity argument and Lemma \ref{lem:estructura-clausura} we also have the following lemma.

\begin{lem}\label{lema:paralelos-no-intersectan}
    For every $\eta > 0$ there exists $\epsilon > 0$ such that, whenever $\gamma, \gamma'$ are two geodesic arcs of length smaller than $\epsilon$, and both are $\eta$-perpendicular to a geodesic arc $I$, then 
    \begin{enumerate}
        \item\label{it:prop1} if there exists $p \in S$ and $t, t'$ such that $p = \gamma(t) = \gamma(t')$ then we have
    $$\angle_p(\gamma, \gamma') < 3\eta$$
        \item\label{it:prop2} If there is another arc $I'$ that is $\eta$-perpendicular to $\gamma$ then $I'$ is $3\eta$-perpendicular to $\gamma'$.
    \end{enumerate} 
\end{lem}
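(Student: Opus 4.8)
The plan is to reduce both statements to an almost–flat local picture, where they become elementary, and to let the Gauss–Bonnet theorem absorb the error caused by the positive distance between the points at which angles are measured. (I read the hypothesis of part (1) as: $\gamma$ and $\gamma'$ meet at $p$, i.e.\ $p=\gamma(t)=\gamma'(t')$, since $\gamma$ is embedded.) First I would fix $\epsilon$ smaller than, say, a tenth of the systole of $S$; then a $5\epsilon$--neighbourhood of any geodesic arc of length $<\epsilon$ is embedded, and since $\gamma$ and $\gamma'$ each meet $I$ the union $\gamma\cup\gamma'\cup I$ is connected of diameter $<3\epsilon$, so a neighbourhood of it lifts isometrically to $\HH^2$. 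I work there from now on, viewing the arcs as geodesic segments of $\HH^2$, and I record the witnessing intersections: let $q_1\in\gamma\cap I$ and $q_2\in\gamma'\cap I$ realise the $\eta$--perpendicularity to $I$.

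For part (1), suppose first that $p:=\gamma\cap\gamma'$, $q_1$, $q_2$ are distinct and not collinear, and let $T$ be the geodesic triangle they span, with sides along $\gamma$, $\gamma'$, $I$. The interior angle of $T$ at $q_1$ is $\angle_{q_1}(\gamma,I)$ or its supplement; since $\angle_{q_1}(\gamma,I)\in(\pi/2-\eta,\pi/2]$, in either case it exceeds $\pi/2-\eta$, and likewise at $q_2$. By Gauss–Bonnet the angle sum of $T$ equals $\pi-\mathrm{Area}(T)<\pi$, so the interior angle at $p$ is $<\pi-2(\pi/2-\eta)=2\eta$; as $2\eta<\pi/2$ this equals $\angle_p(\gamma,\gamma')$, giving $\angle_p(\gamma,\gamma')<2\eta<3\eta$. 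The remaining configurations are degenerate and only easier: if $q_1=q_2$ then $\dot\gamma$ and $\dot{\gamma'}$ at that point both lie within $\eta$ of the direction orthogonal to $\dot I$, so they make an angle $<2\eta$ there, and that is their only intersection; the cases $p\in\{q_1,q_2\}$ are impossible — they would force $\gamma'=I$ or $\gamma=I$, contradicting $\eta$--perpendicularity — unless all three points coincide, which reduces to the previous case; and collinearity forces $\gamma=\gamma'$, with angle $0$.

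For part (2), the key observation is that $I$ and $I'$ are \emph{both} $\eta$--perpendicular to $\gamma$, so the argument for (1), applied with the roles of the arcs permuted (and with $\gamma$ now playing the role of the common transversal), yields the sharp bound $\angle(I,I')<2\eta$ at their common point; in the situations where the lemma is used the arcs of $\mathcal I$ pass through a common point, and in general one may first slightly enlarge $I,I'$ so that they meet in the local picture. Now $I$ is $\eta$--perpendicular to $\gamma'$ at $q_2$; writing $r'\in I'\cap\gamma'$, I would compare $\angle_{r'}(I',\gamma')$ with $\angle_{q_2}(I,\gamma')$ by parallel transport along the geodesics joining $q_2$, $r'$ and $I\cap I'$. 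Since parallel transport is an isometry on tangent spaces and the tangent field of a geodesic is parallel along it, the only discrepancy is the holonomy around the triangle spanned by those three points, which is bounded by its area, hence by $O(\epsilon^2)$; one obtains
$$\Bigl|\angle_{r'}(I',\gamma')-\tfrac\pi2\Bigr|\ \le\ \Bigl|\angle_{q_2}(I,\gamma')-\tfrac\pi2\Bigr|+\angle(I,I')+O(\epsilon^2)\ <\ \eta+2\eta+O(\epsilon^2),$$
which is $<3\eta$ once $\epsilon$ is chosen small enough (depending only on $\eta$, hence only on $\delta$). Equivalently — this is the ``continuity argument'', using Lemma~\ref{lem:continuidad-intersecciones} for the persistence of intersections and their angles — if (2) failed for some $\eta$ one could take counterexamples with $\epsilon=1/n\to 0$, rescale the metric by $n$ so the curvature tends to $0$, and pass to the limit in the flat plane, where the angle between two lines is independent of the point at which it is measured and the displayed estimate becomes exact, contradicting the trivial Euclidean instance of the statement.

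The one genuinely delicate point is the angle comparison in (2): on a curved surface, relating angles measured at different points costs a holonomy term, which is exactly why $\epsilon$ must be taken small relative to $\eta$ (equivalently to $\delta$) and why the flat–limit formulation is the cleanest route; everything else is Gauss–Bonnet together with the triangle inequality for the angle metric on the lines through a fixed point. A minor bookkeeping issue is ensuring that the intersection points invoked above ($\gamma\cap\gamma'$, $I\cap I'$, $I'\cap\gamma'$) actually occur, which is automatic in the applications and otherwise handled by slightly enlarging the arcs before running the argument.
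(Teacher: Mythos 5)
The paper itself gives no argument for this lemma beyond the phrase ``by a continuity argument,'' so you are essentially supplying the proof the author omitted, and your overall strategy --- lift a small connected configuration to $\HH^2$, use Gauss--Bonnet, and control angle transport by the area --- is the right one. Part (1) is clean and correct (your Gauss--Bonnet bound even gives the sharper constant $2\eta$; note only that in the collinear degenerate case the three points would force $\gamma$, $\gamma'$ and $I$ to all lie on one geodesic, which is already ruled out by $\eta$-perpendicularity rather than merely forcing $\gamma = \gamma'$).

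Part (2), however, has a quantitative gap as written. Your displayed estimate bounds $\bigl|\angle_{r'}(I',\gamma')-\tfrac{\pi}{2}\bigr|$ by $\eta + 2\eta + O(\epsilon^2)$, and you then assert that this is $<3\eta$ ``once $\epsilon$ is small enough.'' But $\eta$ and $2\eta$ are the \emph{stated} upper bounds in the hypotheses and in your consequence of part (1); there is no a priori slack to absorb an additive $O(\epsilon^2)$, however small, since the deviations $a,b,c$ of the three given angles from $\pi/2$ can each be arbitrarily close to $\eta$. The limiting/rescaling version of the argument has the same defect: taking $\epsilon_n \to 0$ and passing to the flat limit turns all the strict inequalities $<\eta$ into $\le\eta$, while your putative counterexamples give $\ge 3\eta$, so in the limit you reach equality $=3\eta$ in the Euclidean model, which is \emph{attainable} and yields no contradiction. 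To close the argument you need to use the \emph{sign} of the curvature term: in the Gauss--Bonnet computation of $\angle(I,I')$ the third angle of the triangle is $a+c - A$ with $A>0$ the area, and the same sign appears in the quadrilateral version; so the hyperbolic defect actually improves the Euclidean bound rather than worsening it, and a careful sign-by-sign case analysis gives $<3\eta$ with strict slack equal to the area of the relevant polygon. (Alternatively, and more in the spirit of how the lemma is actually used in Proposition~\ref{prop:band-complex} where $\eta < \delta/6$, you may simply relax the constant to $4\eta$, which your present estimate does deliver after shrinking $\epsilon$.) You should also state explicitly that the conclusion of (2) is conditional on $I'$ and $\gamma'$ actually meeting, and that the intermediate claim $\angle(I,I')<2\eta$ requires $I\cap I'\neq\emptyset$; the ``slightly enlarge'' device does not by itself guarantee two geodesics of $\HH^2$ intersect, so one should either restrict to the configurations arising in the paper or argue via directions at nearby points without insisting on an actual intersection of $I$ and $I'$.
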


The previous lemma could be reformulated in the following way: if a geodesic is in $\boundangle$, then two segments cannot intersect if they are almost parallel.
Now we are ready to prove Proposition \ref{prop:band-complex}

\begin{proof}[Proof of Proposition \ref{prop:band-complex}.]
    Fix $\eta < \delta/6$. By Lemma \ref{lema:paralelos-no-intersectan} there exists an $\epsilon_2 > 0$ such that, if two arcs $\gamma, \gamma' \subseteq \overline{\alpha}$ of length smaller than $\epsilon_2$ are $\eta-$perpendicular to the same arc then they cannot intersect.
       
    For each $p \in \overline{\alpha}$ we take a neighborhood $U_p$ as in Lemma \ref{lem:las-transversales}, with their respective finite collection of arcs $\mathcal{I}_p$. Because $\overline{\alpha}$ is compact we can take a finite number of neighborhoods $U_{p_1}, \ldots, U_{p_m}$ that cover $\overline{\alpha}$ with their respective collection of arcs 
       $$\mathcal{I} = \bigcup_{i = 1}^m \mathcal{I}_{p_i}.$$
       Since the collection $\{U_{p_i}\}$ covers $\overline{\alpha}$, and by the construction of the intervals, every geodesic $\alpha' \subset \overline{\alpha}$ can be written as a union of geodesic segments $\gamma: [a, b] \to S$, where there exist intervals $I, I' \in \mathcal{I}$ such that $\gamma$ is $\eta$-perpendicular to $I$ at $\gamma(a)$ and to $I'$ at $\gamma(b)$. 

       Moreover, by Remark~\ref{rmk:perturbacion-transversales}, and after a slight perturbation of the intervals if necessary, we may assume that no two intervals in $\mathcal{I}$ intersect at a point in $\overline{\alpha}$. Additionally, after possibly shortening the intervals, we may further assume that $\partial I \cap \overline{\alpha} = \emptyset$ for every $I \in \mathcal{I}$.

   
    Now we construct a graph $(V, E)$ such that $V = \mathcal{I}$ is the set of vertex, and $(I_i, I_j) \in E$ if there exists a geodesic arc in $\overline{\alpha}$ that is $\epsilon$-perpendicular to $I_i$ and $I_j$ and is not $\eta$-perpendicular to any other element of $\mathcal{I}$. We denote by $\alpha_{ij}$ the set of such arcs.
   
    For each $I_i \in V$, consider 
    $$\alpha_i = \{\cup \alpha_{ij}: (I_i, I_j)  \in E\}.$$
    By construction, whenever a geodesic arc crosses a ball $U_i$, we will find an arc $I$ that is $\eta$-perpendicular to the geodesic. Thus, every arc in $\alpha_i$ will have a length smaller than $6 \epsilon$, and because $\overline{\alpha}$ does not cross any intersection of arcs, the length is uniformly bounded below. This implies that $\alpha_{i}$ is a closed, simple subset of geodesic arcs. 
    Consider $\epsilon^i_0 > 0$ such that the tubular neighborhood $\mathcal{N}(\alpha_i, \epsilon^i_0)$ does not intersect the boundary of any line in $\mathcal{I}$ or any of the intersection points of arcs of $\mathcal{I}$. 
    
    We will adapt the argument in Erlandsson-Souto's book, page 62 (\cite{erlandsson2016counting}) to construct a thickening of $\alpha_i$. If we take $\epsilon^i_0$ small enough we can assume that the boundary of $\mathcal{N}(\alpha_i, \epsilon^i_0)$ is a piecewise $C^1$ curve, with a finite number of singularities. Thus, we can divide $\mathcal{N}(\alpha_i, \epsilon^i_0)$ into a finite union of bands $B_i$, where the vertical boundaries of the bands are either subsegments of $I \in \mathcal{I}_i$ or geodesics transversal to $\partial \mathcal{N}(\alpha_i, \epsilon^i_0)$, going through the singularities. It is not hard to see that we can then foliate each $B_i$ to build a vertical foliation transversal to the arcs in $\alpha_i$. See Figure \ref{fig:thickening} to see two bands that intersect each other.
   
    By taking 
    $$\epsilon_2 = \min_{i \in m} \{ \epsilon^i_0, \epsilon \}$$
     we can glue all the incomplete band complexes together to build a thickening of $\overline{\alpha}$. Again, because of Lemma \ref{lema:paralelos-no-intersectan}, we have that no two arcs in $\alpha_i$ can intersect, and thus, two geodesics in the same band cannot intersect, which proves the property (\ref{it:prop1}) of the proposition.
     

    In order to prove property (\ref{it:prop2}) of the proposition, first observe that for every $\epsilon < \epsilon_2$, the previous construction yields a band complex structure on the tubular neighborhood $\mathcal{N}(\overline{\alpha}, \epsilon)$. Let $(X_\epsilon, i_\epsilon)$ the band complex structure in such tubular neighborhood. 
    Because 
     $$\bigcap_{\epsilon > 0} \mathcal{N}(\overline{\alpha}, \epsilon) = \overline{\alpha},$$
    for every $\tau > 0$ there exists $\epsilon_\tau > 0$ such that the following holds: if $\beta$ and $\gamma$ are two geodesics that pass through a band $B$ of $X_{\epsilon_\tau}$, and $t \in \RR$ is such that $\beta(t) \in B$, then $d(\dot{\beta}(t), \dot{\gamma} < \tau)$.
    Then property (2) follows after taking $\tau$ smaller than the constant of Lemma \ref{lem:continuidad-intersecciones}.    
   \end{proof}
   
   \begin{figure}[h]
    \centering
    \def\svgwidth{0.5\columnwidth}
    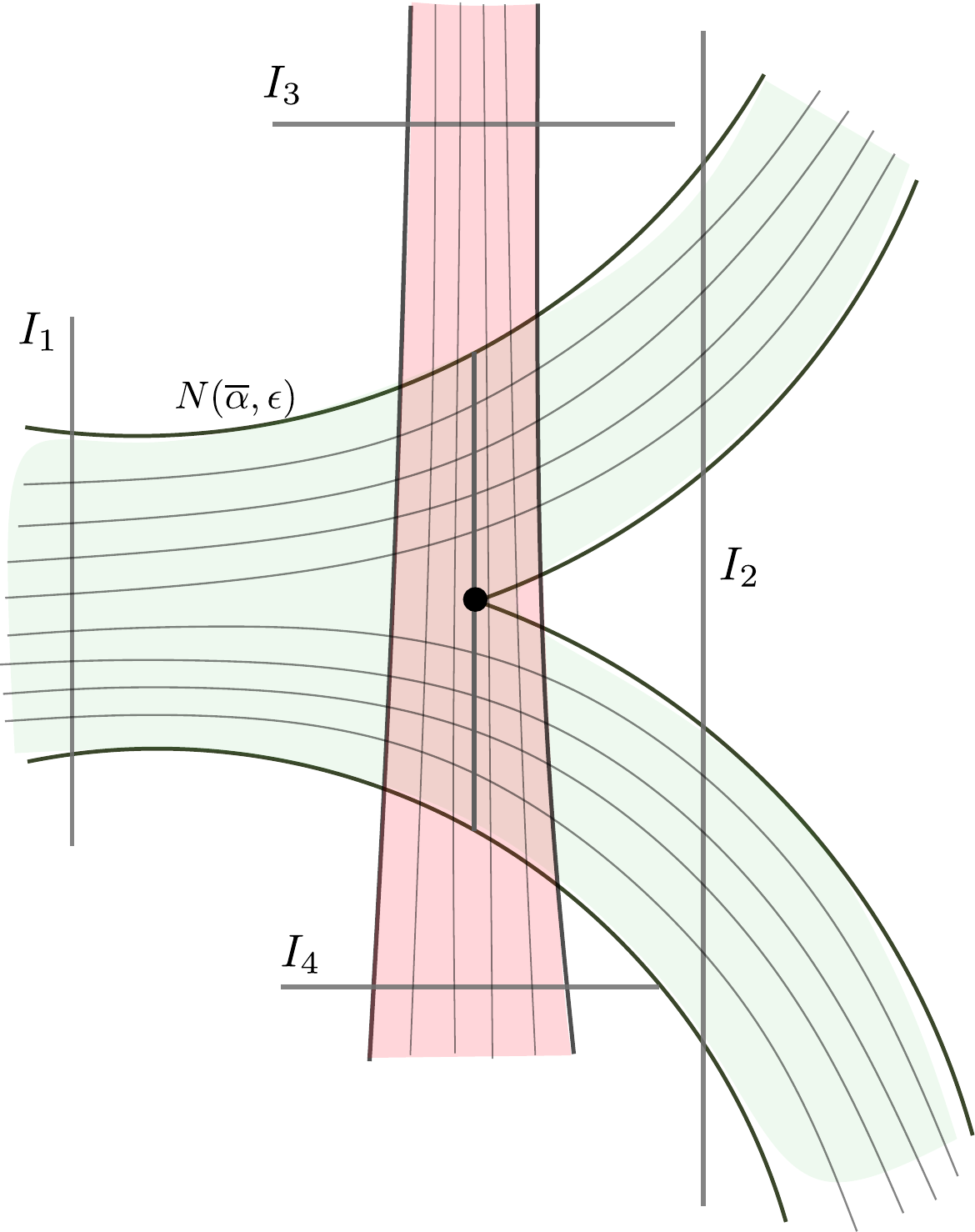
    \caption{A figure representing the construction of the bands for $\alpha_{1,2}$ and $\alpha_{3,4}$. Observe that any geodesic carried by these bands needs to have a self-intersection.}
    \label{fig:thickening}
  \end{figure}
   
   \subsection{Proof of Proposition \ref{prop:simple-ends}}

   Using the band complex we constructed in the previous subsection, we will prove Proposition \ref{prop:simple-ends}. We will identify which bands carry the self-intersections of the geodesic, and show we can remove these intersections through a finite cover.
   
   \begin{defi}
    Let $\alpha$ be a geodesic, and $(i, X)$ a thickening of $\alpha$ with $\alpha'$ the lift of $\alpha$ to $X$. We say that the band $B \subseteq X$ is \textit{recurrent} if, for every $K > 0$, there exists $t > K$ such that $\alpha'(t) \in B$. We say that a recurrent band is \textit{singularly recurrent} if there exists a recurrent band $B'$ such that $i(B) \cap i(B') \neq \emptyset$.
   \end{defi}

   \begin{rmk}\label{rmk:bandas-no-intersectan}
    Let $\alpha \in \boundangle$, and $(i, X)$ the band complex structure $\alpha$ given by Proposition \ref{prop:band-complex}. Then $\alpha$ is asymptotically simple if and only if there are no singularly recurrent bands.
   \end{rmk}
   
   \begin{proof}
    This follows directly from properties (1) and (2) of Proposition \ref{prop:band-complex}.
   \end{proof}

   Now, we are ready to prove the proposition.
   \begin{proof}[Proof of proposition \ref{prop:simple-ends}]
    We will show the fact that if $\alpha \in \boundangle$ then there exists a finite cover $p: S \to S'$ and a geodesic $\alpha'$ such that $\alpha'$ is asymptotically simple in the future and $p(\alpha') = \alpha$. Because an analogous proof works for the geodesics that are asymptotically simple in the past, proving this is enough to prove the proposition.
   
    Let $\epsilon_2$ be the constant given by Remark \ref{rmk:cover-of-thickening}, and let $(i, X)$ be the band complex structure on the $\epsilon$-tubular neighborhood of $\overline{\alpha}$ given by Proposition \ref{prop:band-complex} for $\epsilon < \epsilon_2$. Let $\tilde{\alpha}$ be the lift of $\alpha$ in $X$, and let $B$ be a recurrent band. Pick $L$ a leaf of the vertical foliation such that $i(L) \subseteq B$. Observe that there exists a sequence $\{t_n\}_{n \in \NN}$ such that $\alpha|_{[t_n, t_{n+1}]}$ intersects every singularly recurrent band and $\tilde{\alpha}(t_n) \in L$ for every $i \in \NN$. Because the leaves of the vertical foliation are closed, we can assume that $\alpha(t_n)$ converges to a point $p \in L$. Moreover, because $\tilde{\alpha}$ is simple, $\alpha(t_n)$ converges to some $i(p)$ also in $T^1S$.
   
    Because of the Anosov closing lemma for flows (see the book of Fisher-Hasselblatt \cite{fisher2019hyperbolic}), there exists $n \in \NN$ and a closed geodesic $\gamma$ such that $\gamma \subseteq \mathcal{N}(\alpha_{[t_n, t_{n + 1}]}, \epsilon)$. This implies that each singularly recurrent band $B_i$ is contained in $\mathcal{N}(\gamma, 2\epsilon)$.
    
    Theorem 3.3 of Scott's paper \cite{scott1978subgroups} states that every closed geodesic can be lifted to a simple geodesic through a finite cover. As a direct application of this, there exists a finite cover $p: S' \to S$ and a simple closed curve $\gamma'$ such that $p(\gamma') = \gamma$. Choose a lift $\alpha'$ of $\alpha$ such that $\gamma' \subseteq \mathcal{N}(\alpha', \epsilon)$. Observe that $p$ factors the band complex $(i, X)$ to a band complex $(i', X)$ which carries $\alpha'$. Moreover, the set of recurrent bands for $i'$ is exactly the lifts to $S'$ of the recurrent bands for $i$. The fact that $\gamma'$ is simple implies that none of these bands can be singularly recurrent, which proves the proposition.   
   
   \end{proof}

\section{Hausdorff dimension of asymptotically simple geodesics}\label{sec:dimhausf}

In this section we will prove Theorem \ref{teo:el-wan}. To do so, we need to extend the result from Birman-Series \cite{birman1985geodesics} on the Hausdorff dimension of simple. In that paper, the authors show that if $\mathscr{G}_s$ denotes the set of simple geodesics, then  $\hffdim \mathscr{G}_s = 0$. We prove the stronger result that $\packdim \mathscr{G}_s = 0$, where $\packdim$ refers to the packing dimension (defined in \ref{defi:dimension}) although the argument is essentially the same.

Throughout this section we primarily follow the definitions and notations from Birman-Series \cite{birman1985geodesics} for computations in the hyperbolic plane and Falconer's book \cite{falconer2013fractal} for computations on dimension.

\subsection{Notions of dimension and their properties}\label{subsec:dimensions}

We first define the main two notions of dimension we use to prove Proposition \ref{prop:terminanensimple}, which computes the Hausdorff dimension of asymptotically simple geodesics. In this subsection, $X$ will be a compact metric space. In this paper we always take $X = \partial\HH^2 \times \partial \HH^2$.

Let $A \subset X$. For $\epsilon > 0$, let $N(A, \epsilon)$ denote the smallest number of sets of diameter at most $\epsilon$ needed to cover $A$. 

\begin{defi}\label{defi:dimension}
    Let $A \subseteq X$
    
    \begin{enumerate}
        \item The \emph{upper box dimension} (also called the \emph{upper Minkowski dimension}) of $A$ is defined as
        \[
        \upboxdim A = \limsup_{\delta \to 0} \frac{\log N(A, \epsilon)}{-\log \epsilon}.
        \]
        
        \item The \emph{packing dimension} of $A$ is defined as
        \[
        \packdim A = \inf \left\{ \sup_i \upboxdim A_i : A \subset \bigcup_{i=1}^\infty A_i \right\},
        \]
        where the infimum is taken over all countable coverings of $A$ by sets $A_i$.
        
    \end{enumerate}
\end{defi}

\begin{rmk}\label{rmk:packingbox}
    Just by the definition it is clear that if $A \subseteq X$ then
    $$\packdim A \le \upboxdim A.$$
\end{rmk}

The upper box dimension is a well-known notion of dimension. It has the advantage that it is relatively easy to compute, and it provides an upper bound for the Hausdorff dimension. However, this dimension has the property that
$$\upboxdim A = \upboxdim \bar{A}.$$
This poses some problems: the upper box dimension of a dense subset is equal to the total dimension of the ambient space. Note that the action of $\pi_1(S)$ is minimal in $\mathscr{G}$ (i.e. every orbit is dense). Consequently, the upper box dimension cannot distinguish between different subsets of geodesics: they all end up having full box dimension simply because non-empty $\pi_1(S)$-invariant sets are dense.

In order to solve this issue, we recur to the packing dimension, also known as the modified upper box dimension. It has the following useful property.

\begin{prop}\label{prop:countableunion}
    Let $(A_n)_{n \in \NN}$ be a family of sets on a compact metric space $X$. Then
    $$\packdim \left(\bigcup_{n \in \NN} A_n\right) = \sup_{n \in \NN} \packdim A_n.$$
\end{prop}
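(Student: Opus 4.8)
The plan is to prove the stated countable stability property of packing dimension, namely
\[
\packdim \left(\bigcup_{n \in \NN} A_n\right) = \sup_{n \in \NN} \packdim A_n,
\]
directly from the definition of $\packdim$ as the modified upper box dimension given in Definition~\ref{defi:dimension}. The inequality $\ge$ is the easy half: since each $A_n$ is contained in the union $\bigcup_m A_m$, and $\packdim$ is clearly monotone under inclusion (any countable cover of the larger set restricts to a countable cover of the smaller one), we get $\packdim A_n \le \packdim(\bigcup_m A_m)$ for every $n$, and taking the supremum over $n$ gives one direction.

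For the reverse inequality $\le$, write $s = \sup_n \packdim A_n$, so that $\packdim A_n \le s$ for every $n$. By the definition of $\packdim A_n$ as an infimum over countable covers, for each fixed $n$ and each $\varepsilon > 0$ there is a countable cover $A_n \subseteq \bigcup_{j \in \NN} A_{n,j}$ with $\upboxdim A_{n,j} \le s + \varepsilon$ for all $j$. Now the doubly-indexed family $\{A_{n,j}\}_{(n,j) \in \NN \times \NN}$ is still a countable family, and it covers $\bigcup_n A_n$; since every member has upper box dimension at most $s + \varepsilon$, this particular cover witnesses that $\packdim(\bigcup_n A_n) \le s + \varepsilon$. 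Letting $\varepsilon \to 0$ yields $\packdim(\bigcup_n A_n) \le s$, completing the proof.

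The only mild subtlety — and the one point I would make sure to address carefully — is the bookkeeping with the two layers of infima: the outer covers defining $\packdim(\bigcup_n A_n)$ are required to be countable, and we must check that a countable union of countable covers is again countable, which is exactly the standard fact that $\NN \times \NN$ is countable. No compactness of $X$ is actually needed for this particular statement (it is listed as a blanket hypothesis in the subsection), so I would simply not invoke it. I expect no real obstacle here; the argument is the classical proof that the modified upper box dimension is countably stable, adapted verbatim to the notation of the paper, and I would present it in just the two short paragraphs above.
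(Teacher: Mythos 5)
Your proof is correct and is exactly the standard argument for countable stability of the packing (modified upper box) dimension, as found in Falconer's book, which the paper cites for such facts; the paper itself states this proposition without proof. Your remark that compactness of $X$ plays no role in this particular statement is also accurate.
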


The following theorem shows the usefulness of the packing dimension in our setting.

\begin{thm*}[Theorem 3 of Tricot \cite{Tricot1982TwoDO}]
    Let $X$ be a compact metric space, and let $A, B$ subsets of $X$. Then
    $$\hffdim A \times B \le \packdim A + \hffdim B.$$
\end{thm*}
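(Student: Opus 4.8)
This is quoted from \cite{Tricot1982TwoDO}, so rather than reproving it from scratch I recall how the argument goes. The plan is to first establish the mixed product inequality $\hffdim(A\times B)\le\upboxdim A+\hffdim B$ for arbitrary $A,B\subseteq X$, and then to upgrade the first term from the upper box dimension to the packing dimension by combining the countable stability of Hausdorff dimension with the definition of $\packdim$ as the modified upper box dimension (Definition~\ref{defi:dimension}).

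For the mixed inequality I would fix reals $s>\hffdim B$ and $t>\upboxdim A$ and show $\hffdim(A\times B)\le s+t$. Since $\mathcal{H}^{s}(B)=0$, for every $\eta,\delta>0$ there is a countable cover $\{U_i\}$ of $B$ with $\mathrm{diam}\,U_i=\delta_i<\delta$ and $\sum_i\delta_i^{\,s}<\eta$. I would regroup the $U_i$ by dyadic scale, $D_k=\{i:2^{-k-1}<\delta_i\le 2^{-k}\}$, so that $\#D_k\le 2^{(k+1)s}\sum_{i\in D_k}\delta_i^{\,s}$; and, using $t>\upboxdim A$, cover $A$ at each relevant scale $2^{-k}$ by at most $2^{kt}$ sets of diameter $\le 2^{-k}$. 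The products of these two covers yield a cover of $A\times B$ by sets of diameter $\lesssim\delta$ whose $(s+t)$-th powers sum, scale by scale, to $\lesssim\#D_k\,2^{kt}(2^{-k})^{s+t}=\#D_k\,2^{-ks}\lesssim\sum_{i\in D_k}\delta_i^{\,s}$, hence to $\lesssim\sum_i\delta_i^{\,s}<\eta$ in total. Letting $\eta\to0$ and then $\delta\to0$ gives $\mathcal{H}^{s+t}(A\times B)=0$, and then letting $s\downarrow\hffdim B$ and $t\downarrow\upboxdim A$ yields the mixed inequality.

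To finish, I would take any countable cover $A\subseteq\bigcup_{n}A_n$; then $A\times B=\bigcup_n(A_n\times B)$, and by the countable stability of Hausdorff dimension followed by the mixed inequality,
\[
\hffdim(A\times B)=\sup_n\hffdim(A_n\times B)\le\sup_n\big(\upboxdim A_n+\hffdim B\big)=\Big(\sup_n\upboxdim A_n\Big)+\hffdim B .
\]
Taking the infimum over all such covers and invoking Definition~\ref{defi:dimension} gives $\hffdim(A\times B)\le\packdim A+\hffdim B$ (the cases $\packdim A=\infty$ or $\hffdim B=\infty$ being trivial). The one delicate point is the bookkeeping in the middle step: one must keep every product covering set below a fixed diameter threshold—so that the diameter sum genuinely bounds $\mathcal{H}^{s+t}_{c\delta}(A\times B)$—while simultaneously controlling the sum of the $(s+t)$-th powers; matching the dyadic scale decomposition of the Hausdorff cover of $B$ with a box-counting cover of $A$ at the corresponding scale is precisely what makes these two requirements compatible, and the rest is routine.
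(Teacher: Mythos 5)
The paper states this result without proof as a direct citation of Theorem 3 of Tricot \cite{Tricot1982TwoDO}, so there is no internal argument to compare your proposal against. Your reconstruction is the standard proof and is correct: the mixed bound $\hffdim(A\times B)\le\upboxdim A+\hffdim B$ is obtained by matching a dyadic decomposition of a nearly optimal Hausdorff cover of $B$ with box-counting covers of $A$ at the corresponding scales (with the implicit but harmless point that the box-counting estimate $N(A,2^{-k})\le 2^{kt}$ is only available for $k$ large, which is automatic once the Hausdorff cover is taken at mesh $<\delta$ for $\delta$ small), and the passage from $\upboxdim$ to $\packdim$ then follows from the countable stability of Hausdorff dimension together with the definition of $\packdim$ as the modified upper box dimension.
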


This is not true for the Hausdorff dimension: there are sets $A, B$ such that $\hffdim A = \hffdim B = 0$ but $\hffdim (A \times B) > 0$. For one such construction, one can check Example 7.8 of Falconer's book \cite{falconer2013fractal}.

\subsection{Hausdorff dimension of asymptotically simple geodesics.}

Let $\simpends$, as before, be the set of asymptotically simple geodesics (see Definition \ref{def:ends-simply}). A consequence of Lemma \ref{lem:a-la-estable} is that $\simpends = \pi(\mathscr{G}_s) \times \pi(\mathscr{G}_s)$, where $\mathscr{G}_s$ is the set of simple geodesics. We now state the following proposition:

\begin{prop}\label{prop:terminanensimple}
    The set $\simpends$ of asymptotically simple geodesics has Hausdorff dimension $0$.
\end{prop}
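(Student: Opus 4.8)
The plan is to combine two ingredients: the inclusion $\simpends \subseteq \pi(\mathscr{G}_s) \times \pi(\mathscr{G}_s)$ coming from Lemma~\ref{lem:a-la-estable}, and the product inequality of Tricot quoted above. By Tricot's theorem applied with $A = B = \pi(\mathscr{G}_s)$,
\[
\hffdim\bigl(\pi(\mathscr{G}_s) \times \pi(\mathscr{G}_s)\bigr) \le \packdim \pi(\mathscr{G}_s) + \hffdim \pi(\mathscr{G}_s),
\]
so it suffices to prove that $\packdim \pi(\mathscr{G}_s) = 0$; since Hausdorff dimension is bounded above by packing dimension and both terms on the right will vanish, we conclude $\hffdim \simpends = 0$. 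Note that $\pi(\mathscr{G}_s)$ is the set of endpoints (first coordinates) of simple geodesics on $\HH^2$, and $\pi(\mathscr{G}_s) \times \pi(\mathscr{G}_s)$ has the product metric coming from the two $\partial\HH^2$ factors, so the hypotheses of Tricot's theorem are met once we know $X = \partial\HH^2 \times \partial\HH^2$ is compact.

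\textbf{Key steps.} First I would recall the Birman--Series estimate from \cite{birman1985geodesics}: covering $\mathscr{G}_s$ (or better, a fundamental-domain piece of it) by the combinatorial ``boxes'' associated to prefixes of cutting sequences, one obtains that for a suitable sequence of scales $\epsilon_n \to 0$ the covering number $N(\mathscr{G}_s, \epsilon_n)$ grows subexponentially in $-\log\epsilon_n$, which is exactly the statement that a compact piece of $\mathscr{G}_s$ has upper box dimension $0$. The same covering, projected to the first coordinate, bounds $N(\pi(\mathscr{G}_s \cap F), \epsilon_n)$ for a compact fundamental piece $F$, so each such piece has $\upboxdim = 0$, hence packing dimension $0$ by Remark~\ref{rmk:packingbox}. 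Second, since $\mathscr{G}$ is covered by countably many translates $g \cdot F$, $g \in \pi_1(S)$, and each translate is an isometry (hence bi-Lipschitz, preserving box and packing dimension), $\pi(\mathscr{G}_s)$ is a countable union of sets of packing dimension $0$; by Proposition~\ref{prop:countableunion}, $\packdim \pi(\mathscr{G}_s) = 0$. Finally, plugging into Tricot's inequality and using $\hffdim \le \packdim$ again for the second summand, together with monotonicity of $\hffdim$ under the inclusion $\simpends \subseteq \pi(\mathscr{G}_s)^2$, yields $\hffdim \simpends = 0$.

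\textbf{Main obstacle.} The genuinely delicate point is the assertion that the Birman--Series argument does in fact give \emph{box dimension zero} (not merely Hausdorff dimension zero) for a compact piece of $\mathscr{G}_s$, since, as noted in the text, one cannot take this verbatim from \cite{birman1985geodesics} and must re-examine their covering argument; the subtlety is that the natural combinatorial covers of $\mathscr{G}_s$ live at a discrete sequence of scales and one must check the covering number is controlled \emph{uniformly} in the scale, and that restricting to a compact fundamental domain $F$ does not destroy the subexponential growth. I would address this by reproducing the Birman--Series counting of admissible cutting sequences of a given combinatorial length, showing the count is $e^{o(n)}$ where $2^{-n}$ (up to a multiplicative constant depending only on $S$) is the corresponding metric scale, and interpolating between consecutive scales at the cost of a bounded multiplicative factor — routine but needing care. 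A secondary, easier point to verify is that the metric on $\mathscr{G}$ restricted to $\pi(\mathscr{G}_s) \times \pi(\mathscr{G}_s)$ is genuinely (bi-Lipschitz to) the product of the two boundary metrics, so that Tricot's theorem applies as stated.
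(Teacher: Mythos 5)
Your proposal matches the paper's argument essentially step for step: the inclusion $\simpends \subseteq \pi(\mathscr{G}_s) \times \pi(\mathscr{G}_s)$ from Lemma~\ref{lem:a-la-estable}, the Birman--Series cutting-sequence covering giving upper box dimension zero for a compact fundamental-domain piece of $\mathscr{G}_s$ (Lemma~\ref{lemma:goodcover} and its corollary, where the interpolation between the discrete scales $ce^{-\eta n}$ is carried out exactly as you indicate), countable stability of packing dimension to obtain $\packdim \pi(\mathscr{G}_s) = 0$, and Tricot's product inequality to conclude. The one technicality you would meet when re-examining the covering, and which the paper handles explicitly, is that the combinatorial cover misses the finitely many \emph{exceptional} simple geodesics whose lifts through the chosen fundamental domain hit its vertices; the paper resolves this by working with two fundamental domains with disjoint vertex sets (Remark~\ref{rem:fundomains}).
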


In order to prove this proposition, we will use Theorem 3 of Tricot \cite{Tricot1982TwoDO}, stated in Subsection \ref{subsec:dimensions}. To compute $\packdim \pi(\mathscr{G}_s)$, we will pick representatives of simple geodesics and compute the upper box dimension for the set of such representatives.

\begin{defi}
Let $R$ be a fundamental domain for $S$ in $\HH^2$. A simple geodesic $\gamma$ in $S$ is said to be \textit{non-exceptional with respect to $R$} if every lift $\tilde{\gamma}$ of $\gamma$ that intersects $R$ non-trivially avoids the vertices of $R$.
\end{defi}

Given a fundamental domain $R$, we will consider, for every non-exceptional geodesic, one representative that intersects $R$. Then, we will compute the upper box dimension of such set. Even if we miss some geodesics, the following remark tells us that with two fundamental domains we can have a non-exceptional representative of all but finite geodesics.

\begin{rmk} \label{rem:fundomains}
Let $R_1$ and $R_2$ be finite-sided fundamental domains with no shared vertex. Then only finitely many simple geodesics are exceptional with respect to both $R_1$ and $R_2$.
\end{rmk}

\begin{proof}
    An exceptional geodesic with respect to $R_1$ and $R_2$ needs to be the projection of a geodesic that joins a vertex of $R_1$ with one of $R_2$. Since there are only a finite number of vertices in each fundamental domain, there is only a finite number of geodesics that can be exceptional.
\end{proof}

The proof begins with the following lemma, which is an adaptation of Proposition 4.1 from  Birman-Series \cite{birman1985geodesics}.

\begin{lem}\label{lemma:goodcover}
    Let $A_R$ be the set of non-exceptional geodesics for a given fundamental domain $R$. Then there exist constants $\eta, c > 0$, depending only on $R$, and a polynomial $P$ such that, for each $n \in \mathbb{N}$, there exists a set $\tilde{F}_n \subseteq \mathscr{G}_s$ satisfying $\lvert \tilde{F}_n \rvert < P(n)$, with the following property:

    For every $\gamma \in A_R$, and for every lift $\tilde{\gamma}$ that intersects $R$, there exists $g \in \tilde{F}_n$ such that
    \[
    d(\tilde{\gamma}, g) < c e^{-\eta n}.
    \]
\end{lem}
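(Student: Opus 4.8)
The plan is to follow the strategy of Birman--Series' Proposition 4.1, transported to our setting of pairs of endpoints in $\GG = \partial\HH^2 \times \partial\HH^2 \setminus \Delta$. The key geometric fact underlying the argument is that a simple geodesic, when lifted to $\HH^2$, cannot cross a given fundamental domain $R$ too many times: each lift that meets $R$ must run through the tiles of the $\pi_1(S)$-tessellation $\{gR\}$ along an embedded path in the dual tree, and disjointness of the lifts in $\HH^2$ forces any two lifts of a simple geodesic that both meet $R$ to fellow-travel the same pattern of tiles for a long time. Quantitatively, there is a constant $\eta>0$ (the rate at which geodesics in $\HH^2$ that fellow-travel $n$ tiles stay within $ce^{-\eta n}$ of each other) so that the combinatorial ``cutting sequence'' of length $n$ of a lift $\tilde\gamma$ determines the pair of endpoints $(\tilde\gamma(-\infty),\tilde\gamma(+\infty))$ up to error $ce^{-\eta n}$ in the metric on $\GG$.

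The steps, in order, are as follows. First I would fix $R$ and recall the encoding of a lift $\tilde\gamma$ meeting $R$ by its cutting sequence: the finite word recording, for the first $n$ tiles crossed on each side of $R$, which edge of the current tile is exited. Second, I would invoke the Birman--Series combinatorial estimate: the number of cutting sequences of length $n$ that can be realized by \emph{simple} geodesics grows only polynomially in $n$ — this is precisely where simplicity is used, since a non-simple geodesic can realize exponentially many patterns but a simple one is forced, by the disjointness of its lifts and the resulting ``linearity'' of the branching, into at most $P(n)$ admissible words. This gives the polynomial bound $|\tilde F_n| < P(n)$, where $\tilde F_n$ is a set of representative simple geodesics realizing all admissible length-$n$ cutting sequences. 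Third, I would verify the approximation claim: if $\gamma \in A_R$ and $\tilde\gamma$ is a lift meeting $R$, then $\tilde\gamma$ shares its length-$n$ cutting sequence with some $g \in \tilde F_n$; two geodesics of $\HH^2$ entering $R$ and then crossing the same $n$ tiles on each side have endpoints that agree up to $ce^{-\eta n}$ (hyperbolic contraction: the tiles crossed confine both endpoints to a nested family of intervals of boundary, whose diameter shrinks geometrically in the number of tiles, using that $R$ is compact so tiles have uniformly bounded diameter and that the non-exceptionality hypothesis keeps $\tilde\gamma$ away from vertices so the cutting sequence is well-defined). Hence $d(\tilde\gamma, g) < ce^{-\eta n}$ in the metric on $\GG$, as claimed.

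The main obstacle is the second step: establishing that simple geodesics realize only polynomially many cutting sequences of length $n$. This is the heart of Birman--Series, and the adaptation requires care because we are working with the specific (finite-sided, chosen so that non-exceptional geodesics avoid vertices) fundamental domain $R$ rather than the ideal polygon setup in some treatments; one must check that the ``no backtracking and bounded branching'' combinatorics of the dual tree still yields a polynomial count. Concretely, the argument is that the cutting sequences of simple geodesics meeting $R$, viewed as paths in the tessellation, cannot contain two subpaths that would force two lifts to cross — and a counting argument on the tree of admissible non-crossing continuations gives a bound of the form $P(n) = Cn^d$ with $d$ depending only on the number of sides of $R$. I would present this by citing Birman--Series Proposition 4.1 for the combinatorial core and spelling out only the modifications needed for our choice of $R$ and for passing from geodesics in $S$ to points of $\GG$; the geometric contraction estimate (third step) is then routine hyperbolic geometry given the compactness of $R$.
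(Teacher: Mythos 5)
Your proposal follows essentially the same route as the paper: you invoke Birman--Series' Proposition 4.1 as a black box to obtain the polynomial count $P(n)$ of cutting-sequence classes (equivalently, the set $F_n$ of geodesic arcs crossing the same $n$ fundamental domains), and then obtain the exponential contraction estimate by observing that two geodesics sharing a length-$n$ cutting sequence fellow-travel across $n$ tiles and hence have endpoints within $c e^{-\eta n}$. The only stylistic difference is that the paper makes the contraction step explicit by citing Birman--Series' Lemma 1.1(a)--(b), whereas you summarize it as routine hyperbolic geometry; the substance is the same.
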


\begin{proof}
    According to Proposition 4.1 in Birman-Series \cite{birman1985geodesics} and its proof, there exists a polynomial $P$ with the following property: Given $n \in \NN$, there exists a set $F_n$ of geodesic arcs with endpoints in $\partial R$, with $\text{card}(F_n) < P(n)$ such that for every geodesic $\gamma \in A_R$ and every lift $\tilde{\gamma}$ that intersects $R$ there is a geodesic arc $\delta \in F_n$ whose completion lies in the same $n$ consecutive fundamental domains as $\tilde{\gamma}$, both in the future and the past. 

    Define the set $$\tilde{F}_n = \{ (\delta^-, \delta^+): \delta \text{ is a geodesic with an arc that belongs to } F_n \}. $$
Clearly this set is in bijection with $F_n$. Thus, we only need to show the existence of the constants $c$
 and $\eta$ to get the desired result.


Let $(\gamma^-, \gamma^+) \in A_R$. By construction, there exists $\delta \in F_n$ and $t_0, t'_0, t_n, t'_n \in \RR$ such that:
\begin{itemize}
\item $ t_n \geq n\eta $ and $ t'_n \geq n\eta $,
\item The segment $ \gamma|_{[t_0, t_n]} $ remains within a $\operatorname{diam}(R)$-neighborhood of $ \delta|_{[t'_0, t'_n]} $.
\end{itemize}
Lemma 1.1(a) of Birman-Series \cite{birman1985geodesics} states that there exists a constant $ c_1 $ such that

\[
d(\gamma_{t_n}, \delta_{t'_n}) < c_1 e^{-\eta n}.
\]

Applying part (b) of the same lemma, we also obtain constants $ c_2 $ with

\[
d(\gamma_{t_n}, \gamma^+) < c_2 e^{-\eta n}, \quad d(\delta_{t'_n}, \delta^+) < c_2 e^{-\eta n}.
\]

By the triangle inequality, it follows that:

\[
d(\gamma^+, \delta^+) < (c_1 + c_2) e^{-\eta n}, \quad d(\gamma^-, \delta^-) < (c_1 + c_2) e^{-\eta n}.
\]

Setting $ c = 2(c_1 + c_2) $, we obtain the desired result.
\end{proof}

\begin{cor}
    Let $R$ be a fundamental domain. There exists a set $A'_R \subseteq A_R$ such that $A'_R$ projects to the same set in $S$ as $A_R$ and $\upboxdim A'_R = 0$.
\end{cor}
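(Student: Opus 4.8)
The plan is to build $A'_R$ by selecting, for each non-exceptional geodesic $\gamma\in A_R$, a single lift $\tilde\gamma$ to $\HH^2$ that meets $R$ (at least one such lift exists because the $\pi_1(S)$-translates of $R$ cover $\HH^2$), and then declaring $A'_R=\{(\tilde\gamma(-\infty),\tilde\gamma(+\infty)):\gamma\in A_R\}\subseteq\mathscr{G}$. By construction $A'_R$ maps onto $A_R$ under the projection from $\mathscr{G}$ to geodesics of $S$, so the only thing left to check is $\upboxdim A'_R=0$.

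For the dimension estimate I will invoke Lemma \ref{lemma:goodcover} applied to $R$: it produces constants $\eta,c>0$, a polynomial $P$, and for each $n\in\NN$ a set $\tilde F_n\subseteq\mathscr{G}_s$ with $\lvert\tilde F_n\rvert<P(n)$ such that every lift meeting $R$ of every geodesic in $A_R$ --- in particular every point of $A'_R$ --- lies within distance $ce^{-\eta n}$ of some element of $\tilde F_n$. Hence the $<P(n)$ balls of radius $ce^{-\eta n}$ centered at the points of $\tilde F_n$ cover $A'_R$, so $N(A'_R,\,2ce^{-\eta n})<P(n)$ for every $n$. (All these covering numbers are finite since $\partial\HH^2\times\partial\HH^2$ is compact.)

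Finally I will unwind the definition of the upper box dimension. Put $\epsilon_n=2ce^{-\eta n}$, so that $\epsilon_n\to 0$ geometrically and $-\log\epsilon_n=\eta n-\log(2c)$. For $\epsilon\in[\epsilon_{n+1},\epsilon_n)$ and $n$ large, monotonicity of $N(A'_R,\cdot)$ yields
$$\frac{\log N(A'_R,\epsilon)}{-\log\epsilon}\ \le\ \frac{\log N(A'_R,\epsilon_{n+1})}{-\log\epsilon_n}\ \le\ \frac{\log P(n+1)}{\eta n-\log(2c)},$$
and the right-hand side tends to $0$ as $n\to\infty$ because $P$ is a polynomial. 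Since the intervals $[\epsilon_{n+1},\epsilon_n)$ exhaust a neighborhood of $0$, taking $\limsup_{\epsilon\to 0}$ gives $\upboxdim A'_R=0$.

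I do not expect a genuine obstacle: the substance of the argument is entirely contained in Lemma \ref{lemma:goodcover} (ultimately Proposition 4.1 of Birman--Series), which already encodes the ``polynomially many, exponentially fine approximants'' phenomenon. The only mild care needed is that the choice of representative lift is irrelevant to the bound, together with the routine interpolation from the sampled scales $\epsilon_n$ to all $\epsilon\to 0$, which succeeds precisely because $\epsilon_n$ decays geometrically while $P(n)$ grows only polynomially --- exactly the regime in which the box dimension vanishes.
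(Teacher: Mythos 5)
Your argument matches the paper's proof: pick one lift through $R$ per geodesic in $A_R$, invoke Lemma~\ref{lemma:goodcover} to cover $A'_R$ by $P(n)$ sets of diameter $\asymp e^{-\eta n}$, and interpolate between the sampled scales to conclude $\upboxdim A'_R = 0$. Your write-up is in fact slightly cleaner at the interpolation step, where the paper's displayed inequality drops a couple of $\log$'s.
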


\begin{proof}
    For each $\gamma$ in $A_R$ pick a lift $\tilde{\gamma}$ that goes through $R$. Given $n \in \NN$ Using the constants of Lemma \ref{lemma:goodcover}, if $a_n = ce^{-\eta n}$ there exists an element $g \in \tilde{F_n}$ such that $d(\tilde{\gamma}, g) < a_n$. Thus, $N(\mathscr{G}_s, a_n) \le P(n)$. Now let $b_n$ be a monotone sequence such that $b_n \to 0$. Then there exists a sequence $n_k$ such that $b_n \in [a_{n_k}, a_{n_{k + 1}}]$. Thus, we get the following chain of inequalities.

    $$\frac{N(A_R, b_n)}{-\log b_n} \le \frac{N(A_R, a_{n_k+1})}{-\log a_{n_k}} \le \frac{\log P(n_k + 1)}{- \log(c_2) + \eta n_k}.$$

    Because the limit of the expression on the right is $0$ we get the desired result.
\end{proof}

We have the following consequence of the Corollary.

\begin{cor}\label{cor:packdimsimple}
    We have that $\packdim \mathscr{G}_s = 0$.
\end{cor}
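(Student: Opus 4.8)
The plan is to deduce the statement directly from the Corollary above, which produces, for any fundamental domain $R$, a set $A'_R$ of representatives in $\mathscr{G}$ of the geodesics of $A_R$ with $\upboxdim A'_R = 0$. Since $\mathscr{G}_s$ is $\pi_1(S)$-invariant and $\pi_1(S)$ is countable, it suffices to write $\mathscr{G}_s$ as a countable union of $\pi_1(S)$-translates of one fixed set of box dimension $0$ (plus a countable remainder), and then invoke Proposition \ref{prop:countableunion} together with Remark \ref{rmk:packingbox}.

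Concretely, first I would fix two finite-sided fundamental domains $R_1, R_2$ sharing no vertex and set $B = A'_{R_1} \cup A'_{R_2}$; finite stability of the upper box dimension gives $\upboxdim B = 0$, hence $\packdim B = 0$ by Remark \ref{rmk:packingbox}. By Remark \ref{rem:fundomains}, the simple geodesics of $S$ exceptional with respect to both $R_1$ and $R_2$ form a finite set, so the set $E \subseteq \mathscr{G}_s$ of all of their lifts to $\HH^2$ is countable and therefore has packing dimension $0$. Next I would verify the set-theoretic identity $\mathscr{G}_s = E \cup \bigcup_{g \in \pi_1(S)} g\cdot B$: any $x \in \mathscr{G}_s$ projects to a simple geodesic $\gamma$ of $S$; if $\gamma$ is exceptional for both $R_1$ and $R_2$ then $x \in E$, while otherwise $\gamma \in A_{R_j}$ for some $j$, so $B$ contains a lift $x_0$ of $\gamma$, and $x = g x_0$ for a suitable $g \in \pi_1(S)$ since any two lifts of $\gamma$ differ by a deck transformation.

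The one point that deserves a word of care is why each translate $g\cdot B$ still has packing dimension $0$: the deck transformation $g$ acts on the compact space $\partial\HH^2 \times \partial\HH^2$ as a pair of Möbius transformations of the boundary circle, hence as a diffeomorphism, hence bi-Lipschitz; since packing dimension is a bi-Lipschitz invariant, $\packdim(g\cdot B) = \packdim B = 0$ for every $g$. Applying Proposition \ref{prop:countableunion} to the countable family consisting of $E$ and the sets $g\cdot B$, $g \in \pi_1(S)$, then yields $\packdim \mathscr{G}_s = 0$. I do not expect a genuine obstacle here, since all the analytic work is already contained in Lemma \ref{lemma:goodcover} and its Corollary; the only thing to get right is the bookkeeping with lifts and with the finitely many geodesics missed by both fundamental domains.
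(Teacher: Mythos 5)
Your argument is correct and follows essentially the same route as the paper: two fundamental domains without shared vertices, the corollary giving $\upboxdim A'_R = 0$, Remark~\ref{rmk:packingbox}, the $\pi_1(S)$-orbit decomposition, and Proposition~\ref{prop:countableunion}. The only cosmetic difference is that the paper adjoins a finite set of exceptional representatives to $A'_{R_1}\cup A'_{R_2}$ before translating, whereas you treat the exceptional lifts as a separate countable set, and you make explicit the bi-Lipschitz invariance of packing dimension under deck transformations, which the paper leaves implicit.
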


\begin{proof}
    Let $R_1$ and $R_2$ be two fundamental domains with no vertex in common. Remark \ref{rem:fundomains} shows that there exists a finite set $F \subseteq \mathscr{G}_s$ such that for every simple geodesic $\gamma$ there is a lift $\tilde{\gamma} \in A'_{R_1} \cup A'_{R_2} \cup F$. The upper box dimension is finitely stable, thus
    $$\upboxdim (A'_{R_1} \cup A'_{R_2} \cup F) = \max\left\{ \upboxdim(A'_{R_1}), \upboxdim(A'_{R_2}), F \right\} = 0$$
    and because of Remark \ref{rmk:packingbox} we know
    $$\packdim (A'_{R_1} \cup A'_{R_2} \cup F) = 0.$$
    By definition, it is true that
    $$\mathscr{G}_s = \bigcup_{\gamma \in \Gamma} \gamma \cdot (A'_{R_1} \cup A'_{R_2} \cup F)$$
    and because of Proposition \ref{prop:countableunion} we also have that 
    $$\packdim \mathscr{G}_s = 0.$$
\end{proof}

Proposition \ref{prop:terminanensimple} follows as a consequence of Corollary \ref{cor:packdimsimple}.

\begin{proof}[Proof of Proposition \ref{prop:terminanensimple}]
    Because projections are Lipschitz maps, we get that 
    $$\packdim \pi(\mathscr{G}_s) = \hffdim \pi(\mathscr{G}_s) = 0.$$
    Because of Theorem 3 of Tricot, stated in Subsection \ref{subsec:dimensions} \cite{Tricot1982TwoDO} we have that
    $$\hffdim \pi(\mathscr{G}_s) \times \pi(\mathscr{G}_s) \le \hffdim \pi(\mathscr{G}_s) + \packdim \pi(\mathscr{G}_s) = 0,$$
    and the Proposition is proven.
\end{proof}

We are now ready to prove the first theorem of the paper, which we restate for convenience. Recall that $\boundangle$ is the set of geodesics whose angles of self-intersection are all greater or equal than $\delta$.

\begin{thm*}[Theorem 1]
    For all $\delta > 0$, we have $\hffdim \boundangle = 0$. In particular, the typical geodesic has arbitrarily small self-intersection angles. Moreover, the image of $\boundangle$ as a subset of $S$ has Hausdorff dimension 1.
\end{thm*}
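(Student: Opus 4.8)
The plan is to deduce Theorem~\ref{teo:el-wan} from the two main ingredients already assembled: Proposition~\ref{prop:simple-ends} (every $\alpha \in \boundangle$ lifts to an asymptotically simple geodesic in a finite cover) and Proposition~\ref{prop:terminanensimple} ($\hffdim \simpends = 0$). First I would observe that there are only countably many conjugacy classes of finite-index subgroups of $\pi_1(S)$, hence only countably many finite covers $p_j \colon \tilde S_j \to S$; fixing $\delta > 0$, Proposition~\ref{prop:simple-ends} says that $\boundangle$ is contained in the union over $j$ of $(\partial p_j)(\,\simpends(\tilde S_j)\,)$, i.e. the image under the boundary map of the asymptotically simple geodesics upstairs. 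Since a finite cover induces a bi-Lipschitz identification between $\mathscr{G}(\tilde S_j)$ and $\mathscr{G}(S)$ (both are $\partial\HH^2\times\partial\HH^2\setminus\Delta$ with the same metric, only the group changes), each $(\partial p_j)(\simpends(\tilde S_j))$ has Hausdorff dimension $0$ by Proposition~\ref{prop:terminanensimple} applied to $\tilde S_j$. A countable union of dimension-zero sets has dimension zero, so $\hffdim\boundangle = 0$. The ``in particular'' clause is then immediate: $\mathscr{G}\setminus\bigcup_{\delta>0}\boundangle$ would need $a(\alpha)=0$, and since $\bigcup_{\delta>0}\boundangle = \bigcup_{n} \mathscr{G}_{\ge 1/n}$ is a countable union of dimension-zero sets, the typical geodesic (in the sense of Definition~\ref{def:tipicas}) has $a(\alpha)=0$.

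For the statement about the image in $S$, the upper bound $\hffdim \le 1$ is automatic since $S$ is a surface. For the lower bound I would exhibit a single complete geodesic $\alpha \in \boundangle$ whose image is dense in $S$, or more carefully, whose closure in $S$ has Hausdorff dimension $1$; Example~\ref{ex:boundangle} already produces elements of $\boundangle$ with infinitely many self-intersections, and the construction can be arranged so that the projected geodesic fills a subsurface (e.g. by choosing $\tilde\alpha(-\infty),\tilde\alpha(+\infty)$ to be the endpoints of a geodesic whose closure has positive measure, while still being asymptotic to lifts of simple curves so that $a(\alpha)\ge\delta$). Since $\boundangle$ is $\pi_1(S)$-invariant and its image in $S$ contains the projection of such an $\alpha$, whose closure is a $\pi_1(S)$-invariant geodesically-closed set with nonempty interior-free but one-dimensional closure — actually the cleanest route is: the image of $\boundangle$ in $S$ contains the image of the closed orbit closure $\overline{\dot\alpha}$, and for the filling example this already has topological dimension $1$, hence Hausdorff dimension $\ge 1$.

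The main obstacle I anticipate is the lower-bound half of the image statement. One must produce a geodesic in $\boundangle$ whose image in $S$ is ``large'' (Hausdorff dimension $1$) — but every $\alpha\in\boundangle$ lifts to an \emph{asymptotically simple} geodesic in a finite cover, so its self-intersection pattern is quite constrained, and it is not obvious a priori that such a geodesic can be made to fill (or densely cover) a subsurface. The resolution should be that asymptotic simplicity only controls the two ends, not the middle: by the Anosov closing lemma one can splice together long segments of closed geodesics whose union is geometrically complicated — concretely, take a closed geodesic $\gamma$ whose image is a $\theta$-net of $S$ with $\theta$ small but fixed, form an infinite (non-closed) geodesic that shadows $\gamma$ longer and longer while keeping all self-intersection angles bounded below by $\delta$ (one controls the angles because the self-intersections all occur along the controlled, $\delta$-separated directions coming from $\gamma$, exactly in the spirit of Lemma~\ref{lem:continuidad-intersecciones} and the band-complex analysis), and whose two tails are asymptotic to lifts of a simple closed curve. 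The image of such an $\alpha$ is then within $\theta$ of all of $S$ for the shadowing portion, and letting the shadowing length grow its closure is all of $S$, giving Hausdorff dimension $2 \ge 1$ — or at minimum a subsurface with one-dimensional boundary. I would present this as a construction and verify the angle bound using the tools of Section~2; the counting/dimension bookkeeping in the rest is routine.
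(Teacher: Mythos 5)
The first half of the proposal---$\hffdim \boundangle = 0$ via the countable family of finite covers, Proposition~\ref{prop:simple-ends}, Proposition~\ref{prop:terminanensimple}, and countable stability of Hausdorff dimension---is correct and is exactly the paper's argument. The ``in particular'' clause is also handled correctly.

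The second half, on the image of $\boundangle$ in $S$, has the two bounds exactly reversed, and this is a genuine gap. You claim the upper bound $\hffdim \le 1$ is ``automatic since $S$ is a surface,'' but $S$ has dimension $2$, so what is automatic is only $\hffdim \le 2$; subsets of $\mathscr{G}$ of Hausdorff dimension $0$ certainly do not a priori project to one-dimensional subsets of $S$ (Sapir's result, quoted in the introduction, is nontrivial for exactly this reason). The upper bound is the substantive part. The paper proves it via the Hopf parametrization $h : T^1\HH^2 \to \mathscr{G} \times \RR$: assuming for contradiction that the image $A$ in $T^1S$ has $\hffdim A > 1$, one localizes to a chart, writes the lift as sitting inside $h^{-1}(\mathscr{A} \times [-K,K])$ with $\hffdim\mathscr{A} = 0$ (since $\mathscr{A} \subseteq \boundangle$), and then applies Tricot's theorem to get $\hffdim \le \hffdim(\mathscr{A}) + 1 = 1$, a contradiction. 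Your proposal has no substitute for this step. Conversely, the lower bound $\ge 1$ is immediate---the image contains geodesic arcs---and the elaborate construction you sketch for it is both unnecessary and self-undermining: a geodesic $\alpha \in \boundangle$ cannot have dense orbit (Corollary~\ref{cor:empty-interior} forces $\overline{\alpha}$ to have empty interior, and indeed the very statement you are proving forces $\hffdim \overline{\alpha} \le 1$), so a ``filling'' $\alpha \in \boundangle$ as you describe does not exist, and if it did it would contradict Theorem~\ref{teo:el-wan} rather than establish it.
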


\begin{proof}[Proof of Theorem \ref{teo:el-wan}]
    We need to prove that $\hffdim \boundangle = 0$. Observe that if $p: S' \to S$ is a covering between hyperbolic surfaces, it induces an equivariant diffeomorphism of $\mathscr{G}$, and thus sends $\pi_1(S')$ invariant sets into $\pi_1(S)$ invariant subsets.

    Because finite covers of $S$ up to covering equivalence are in bijection with finite index subgroups of $\pi_1(S)$, and finite index subgroups are finitely generated, the set of finite covers (up to covering equivalence) is countable; we denote the set of such covers by $\{(S_n, p_n)\}$. Let $\simpends(S_n)$ be the set of asymptotically simple geodesics of $S_n$. Proposition \ref{prop:simple-ends} tells us that
    $$\boundangle \subseteq \bigcup_{n \in N} p_n(\simpends(S_n)).$$

    Observe that, because of Proposition \ref{prop:terminanensimple}, we get $\hffdim \simpends(S_n) = 0$, and this implies that $\hffdim p_n(\simpends(S_n)) = 0$. Finally, we have that
    $$\hffdim \boundangle = \hffdim \left(\bigcup_{n \in N} p_n(\simpends(S_n))\right) = \sup \left(\hffdim p_n(\simpends(S_n))\right) = 0$$
    which proves the first part of the theorem.

    The second part of the theorem is a consequence of the Hopf parametrization: a diffeomorphism 
    $$h: T^1\HH^2 \to \mathcal{G} \times \RR^2.$$
    Let $Im \boundangle$ be the projection of $\boundangle$ to the surface. Observe that $\hffdim (Im \boundangle) \ge 1$, since it contains geodesic arcs. Consider $A$ the image of $\boundangle$ in $T^1S$, we will instead prove that $A$ has Hausdorff dimension 1, because the projection to the surface is a Lipschitz map, the theorem follows.

    Assume by contradiction that $\hffdim A > 1$. Because $T^1S$ is compact, there exists a small neighborhood $U \subseteq T^1S$, diffeomorphic to $V \times \Ss^1 \subseteq T^1\HH^2$, with 
    $\hffdim \left(U \cap A\right) > 1$.

    Consider $\tilde{U}$, $\tilde{A}$ lifts to $T^1\HH^2$ of $U$ and $U \cap A$ respectively, such that the restriction of the projection to $T^1S$ is a diffeomorphism onto $U$. Take the set
    $$\mathscr{A} = \{(x, y) \in \mathscr{G}: \text{ there exists } t \in \RR \text{ with } h^{-1}(x, y, t) \in \tilde{A} \},$$
    and $K > 0$ with $\tilde{A} \subseteq h^{-1}(\mathscr{A} \times [-K, K]).$

    Because $\hffdim \boundangle = 0$, we get that $\hffdim \mathscr{A} = 0$. Moreover, we have the following chain of inequalities:
    $$
    \begin{aligned}
    \hffdim \tilde{A} &\le \hffdim h^{-1}(\mathscr{A} \times [-K, K]) = \hffdim (\mathscr{A} \times [-K, K])\\
    &= \hffdim(\mathscr{A}) + \upboxdim([-K, K]) = \hffdim(\mathscr{A}) + 1,
    \end{aligned}
    $$
    which contradicts the assumption.
\end{proof}
\section{Geodesics that do not bound triangles}

To conclude the paper, we prove the Theorem \ref{teo:triangulos}. The idea of the proof is to reduce to the case of Theorem \ref{teo:el-wan}. Thus, we will prove that $\notboundtri$, the set of geodesics that do not bound a triangle, is a subset of $\bigcup_{\delta > 0}\boundangle$. Recall that a geodesic $\alpha$ \emph{bounds a triangle} if there exist lifts $\alpha_1, \alpha_2, \alpha_3$ that pairwise intersect. In order to prove the theorem we will show that if the angles of self-intersection of a given geodesic go to 0, then such geodesic lies in the complement of a geodesic lamination. The theorem then follows from the geometry of such subsets of the surface.

\subsection{The complement of a geodesic lamination}
We will briefly review results from Casson-Bleiler's book \cite{casson1988automorphisms}.
There are two shapes that the complement of a geodesic lamination with no closed leaves can have. The first one is an ideal polygon, and the second one is related to crowns. All this is explored in Chapter 4 of the book.

\begin{defi}
\begin{enumerate}
    \item An $n$-sided \textit{ideal polygon}  is a surface isometric to the convex core of $n$ points in $\partial \HH^2$ 
    \item A \textit{crown} is a complete hyperbolic surface $C$ of finite area and geodesic boundary, which is homeomorphic to $S^1 \times [0,1] \setminus A$, where $A$ is a finite set contained in $S^1 \times \{1\}$.
\end{enumerate}
\end{defi}

\begin{lem}[Lemma 4.4 of Casson and Bleiler \cite{casson1988automorphisms}]\label{lem:Casson-Bleiler}
    Let $\lambda$ be a geodesic lamination without closed leaves on a closed hyperbolic surface $S$. If $U$ is a connected component of $S \setminus \lambda$ then either $U$ is an ideal polygon, or there is a unique compact subset $U_0$ of $U$ such that $U \setminus U_0$ is isometric to a finite disjoint union of interiors of crowns. 
\end{lem}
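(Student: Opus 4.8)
The plan is to analyze $U$ through its metric completion. First I would equip $U$ with the path metric inherited from $S$ and let $\hat U$ be its completion; the relevant setup is carried out in Chapter~4 of \cite{casson1988automorphisms}. One checks that $\hat U$ is a complete hyperbolic surface whose boundary is a union of leaves of $\lambda$, together with finitely many \emph{ideal vertices}: points where two boundary leaves are asymptotic in $\HH^2$, producing a \emph{spike}, that is, a region isometric to the one in $\HH^2$ bounded by two asymptotic geodesics and a horocyclic arc. The essential finiteness input is that $\operatorname{area}(\hat U)=\operatorname{area}(U)\le\operatorname{area}(S)<\infty$ since $S$ is closed; this forces $\pi_1(\hat U)$ to be finitely generated and the number of spikes to be finite.

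Next I would pin down the ends of $\hat U$. Since $\lambda$ has no closed leaves, no boundary component of $\hat U$ is a closed geodesic, so every boundary leaf is bi-infinite. Finite area prevents any boundary leaf from having a full one-sided collar inside $U$, so both ends of every boundary leaf run into spikes; and an interior cusp of $\hat U$ would produce a cusp of $S$, impossible since $S$ is closed. Hence every end of $\hat U$ is a spike. Cutting off each spike along a suitable geodesic or horocyclic cross-arc leaves a compact surface $U_0$ --- the thick part of $\hat U$ together with a compact collar of the boundary leaves --- with $\hat U\setminus U_0$ a finite disjoint union of spike regions.

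Then comes the case split on $\pi_1(\hat U)$. If $\pi_1(\hat U)$ is trivial, $\hat U$ is a simply connected complete finite-area hyperbolic surface with geodesic boundary, hence a convex subset of $\HH^2$ bounded by geodesics meeting at finitely many ideal points: an ideal polygon, with at least $3$ and, since $\operatorname{area}=(n-2)\pi$, only finitely many sides; thus $U$ is its interior and we are in the first alternative. If $\pi_1(\hat U)$ is nontrivial, I would group the boundary leaves and spikes into cyclic chains according to the boundary-at-infinity structure of $\hat U$; each chain, together with a collar, is an annulus-with-spikes, i.e.\ a crown end $C_j$ bounded on the inside by a simple closed geodesic $\partial_0 C_j$, which lies in the interior of $U$ and not in $\lambda$. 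Setting $U_0=\hat U\setminus\bigsqcup_j\operatorname{int}(C_j)$ gives a compact surface with $U\setminus U_0=\bigsqcup_j\operatorname{int}(C_j)$ a finite disjoint union of interiors of crowns. Uniqueness of $U_0$ follows because each $\partial_0 C_j$ is the unique geodesic in its free homotopy class in $\hat U$, so the crown ends --- and hence $U_0$ --- are canonically determined by $U$.

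The main obstacle is the structure theory of the completion $\hat U$: showing that $\hat U$ is genuinely a finite-area hyperbolic surface with geodesic boundary whose only ends are spikes (controlling the boundary leaves and ruling out interior cusps), and making precise the decomposition into a compact core plus spikes. Once that is in hand, the dichotomy and the uniqueness of $U_0$ are comparatively formal consequences of the classification of finite-area hyperbolic surfaces with geodesic boundary.
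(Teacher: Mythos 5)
The paper does not prove this lemma; it is imported verbatim as Lemma~4.4 of Casson--Bleiler \cite{casson1988automorphisms}, so there is no in-paper proof to compare against. Your reconstruction follows the same route as the cited source: pass to the completion $\hat U$ in the induced path metric, use $\operatorname{area}(\hat U)\le\operatorname{area}(S)<\infty$ to force finiteness of the topology and of the spikes, observe that ``no closed leaves'' rules out closed-geodesic boundary components, and then split on whether $\pi_1(\hat U)$ is trivial (ideal polygon) or not (compact core plus finitely many crown ends, with the core pinned down by the core curves of the crowns). One small soft spot: the phrase ``an interior cusp of $\hat U$ would produce a cusp of $S$'' is not literally an implication about $S$; the clean justification is either that $U$ is an open subsurface of the closed surface $S$, hence has no interior punctures, or that the holonomy $\pi_1(\hat U)\to\pi_1(S)$ lands in a cocompact Fuchsian group, which contains no parabolics, so $\hat U$ cannot have cusps. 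With that patched, the argument is a faithful reconstruction of Casson--Bleiler's proof.
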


Now that we have the last ingredients, we are ready to prove the theorem.

\subsection{Proof of theorem \ref{teo:triangulos}}




Before proving Theorem \ref{teo:triangulos}, we need to prove the following lemma on hyperbolic geometry. It states that any simple geodesic that gets arbitrarily close to a simple loop spirals around that loop.

\begin{lem}\label{lem:hojacerrada}
    Let $\alpha$ be a simple geodesic. Assume that there exists a sequence $\{t_n\}_{n \in \NN}$ going to $+\infty$ such that $\dot{\alpha}(t_n)$ converges in $T^1S$ to a point in a closed geodesic $\dot{\gamma}$. Then there exist lifts $\tilde{\alpha}, \tilde{\gamma}$ to $\HH^2$ of $\alpha$ and $\gamma$ respectively such that either $\tilde{\alpha}(+\infty) = \tilde{\gamma}(+\infty)$ or $\tilde{\alpha}(+\infty) = \tilde{\gamma}(-\infty)$. 

\end{lem}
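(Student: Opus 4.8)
The plan is to argue by contradiction: suppose no lift of $\tilde\alpha$ shares an endpoint with a lift of $\tilde\gamma$, and derive that $\alpha$ must have a self-intersection, contradicting simplicity. First I would fix a lift $\tilde\gamma$ of the closed geodesic, with axis stabilized by a hyperbolic element $g \in \pi_1(S)$ whose translation length is the length of $\gamma$. Using the hypothesis that $\dot\alpha(t_n) \to (p,v) \in \dot\gamma$, I would lift appropriately so that, after applying deck transformations $h_n \in \pi_1(S)$, the points $h_n \dot{\tilde\alpha}(t_n)$ converge in $T^1\HH^2$ to a point $(\tilde p, \tilde v)$ on $\tilde\gamma$. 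The key geometric input is that for large $n$ the geodesic $h_n\tilde\alpha$ fellow-travels $\tilde\gamma$ for a long time (this is the Birman--Series-type estimate, e.g. Lemma 1.1 of \cite{birman1985geodesics}, quantifying that closeness of tangent vectors at a point forces exponential closeness of the geodesics on a long interval). In particular, for each fixed $k$, once $n$ is large enough the geodesic $h_n\tilde\alpha$ stays within a small neighborhood of the segment of $\tilde\gamma$ of length $\ell(\gamma)\cdot k$ centered at $\tilde p$.

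Next I would translate this into a statement on $S$: for each $k$ and all sufficiently large $n$, the arc $\alpha|_{[t_n - Ck, t_n + Ck]}$ stays in a small tubular neighborhood of $\gamma$, and wraps around $\gamma$ roughly $k$ times. Because $\alpha$ is simple, its intersection with a thin tubular neighborhood $\mathcal{N}(\gamma,\rho)$ of $\gamma$ is a disjoint union of embedded arcs, each of which is either isotopic (rel endpoints on $\partial\mathcal{N}$) to a spiralling arc or crosses the annulus transversally; the total winding of any single embedded arc in the annulus $\mathcal{N}(\gamma,\rho)$ is bounded by a universal constant depending only on $\rho$ (two disjoint arcs in an annulus cannot each wind around more than once beyond the other without intersecting). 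Since we produced a single arc of $\alpha$ winding $k$ times around $\gamma$ for arbitrarily large $k$, and $\alpha$ is simple, this arc must "converge" to $\gamma$: the forward endpoint of $\tilde\alpha$ (for the appropriate lift) must coincide with an endpoint of $\tilde\gamma$. Making this last step precise is where I expect the main obstacle: one must rule out that $\alpha$ enters $\mathcal{N}(\gamma,\rho)$, spirals many times, then exits and comes back later to spiral again — i.e. that the recurrence is realized by infinitely many disjoint long arcs rather than a single convergent one. I would handle this by noting that two disjoint simple arcs, each spiralling $\ge 2$ times around the core of a thin annulus, must intersect (an Euler-characteristic / bigon argument in the annulus), so at most finitely many excursions of $\alpha$ into $\mathcal{N}(\gamma,\rho)$ can spiral a large number of times; hence some single excursion spirals infinitely, forcing $\tilde\alpha(+\infty) \in \{\tilde\gamma(+\infty), \tilde\gamma(-\infty)\}$.

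Finally I would assemble: pick the lift $\tilde\alpha$ corresponding to that infinitely-spiralling excursion (i.e.\ choose the subsequence and deck transformations so that $h_n\tilde\alpha$ converges, and the limiting geodesic is $\tilde\gamma$ or $g$-related to it). Then $h_n\tilde\alpha \to \tilde\gamma$ in the space of geodesics forces $\lim h_n\tilde\alpha(+\infty) = \tilde\gamma(+\infty)$ and $\lim h_n\tilde\alpha(-\infty) = \tilde\gamma(-\infty)$; combined with the winding statement (the forward ray of $\tilde\alpha$ stays close to $\tilde\gamma$ forever) this pins $\tilde\alpha(+\infty)$ to one of the two endpoints of $\tilde\gamma$, depending on the direction of spiralling, which is exactly the claimed dichotomy. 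The only technical care needed is the bookkeeping of which lift and which sign of spiralling one ends up with, but the core of the argument is the annulus disjointness fact together with the exponential fellow-travelling estimate from \cite{birman1985geodesics}.
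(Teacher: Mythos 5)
Two of the key claims in your plan are false and leave a genuine gap. The winding of a single embedded essential geodesic arc in $\mathcal{N}(\gamma,\rho)$ is \emph{not} bounded by a constant depending only on $\rho$: a geodesic entering the tube at a sufficiently small angle to $\gamma$ spirals arbitrarily many times before exiting. Likewise, two disjoint essential arcs in an annulus can each spiral $\ge 2$ (indeed $\ge k$) times around the core --- just take two parallel spirals --- so it does not follow that ``at most finitely many excursions of $\alpha$ spiral a lot,'' nor that some single excursion spirals forever. The correct annulus fact, which you do state in passing, is only that two disjoint essential arcs have winding numbers differing by less than one. To close the gap you would need to combine this with the observation that the winding of the excursion of $\alpha$ through $t_n$ tends to $+\infty$ with $n$: then for large $n, m$ with windings differing by more than one, the two excursions cannot be disjoint arcs of the simple geodesic $\alpha$, hence must coincide, so eventually all $t_n$ lie in a single excursion, which therefore has unbounded winding, stays in $\mathcal{N}(\gamma,\rho)$ for all forward time, and forces $\tilde{\alpha}(+\infty) \in \{\tilde{\gamma}(+\infty), \tilde{\gamma}(-\infty)\}$.

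Even after that repair, your route is heavier than the paper's, which works directly on $\partial\HH^2$ rather than in the surface. The paper chooses lifts $\tilde{\alpha}_n$ with $\dot{\tilde{\alpha}}_n(t_n) \to \dot{\tilde{\gamma}}$, assumes for contradiction that neither endpoint of any $\tilde{\alpha}_n$ equals $\tilde{\gamma}(\pm\infty)$, takes $g \in \pi_1(S)$ stabilizing $\tilde{\gamma}$ and a fundamental interval $[x,gx]$ for the $g$-action on the arc of $\partial\HH^2$ containing $\tilde{\alpha}_n(\pm\infty)$, and notes that for $n$ large both endpoints escape $[x,gx]$; then some power $g^k$ brings $\tilde{\alpha}_n(+\infty)$ back into $[x,gx]$ while $g^k\tilde{\alpha}_n(-\infty)$ stays beyond it, so the chords $\tilde{\alpha}_n$ and $g^k\tilde{\alpha}_n$ link on the circle, hence cross, contradicting simplicity. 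This is a one-shot linking argument that avoids the annulus combinatorics and winding bookkeeping entirely.
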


\begin{rmk}
    This lemma can be stated in dynamical terms: If $\alpha$ is simple and $\omega^+(\alpha)$ contains a closed curve $\gamma$, then $\omega^+(\alpha) = \gamma$.
\end{rmk}

\begin{proof}
    Take a lift $\tilde{\gamma}$ of $\gamma$ to $\HH^2$, and $D$ a fundamental domain of the action of $\pi_1(S)$ in $\HH^2$. Consider $\tilde{\alpha}_n$ lifts of $\alpha$ such that $\tilde{\alpha}_n(t_n)$ converges to a point $p$ in $\tilde{\gamma}$.
    
    Assume by contradiction that, for every $n \in \NN$, we have $\alpha_n(+\infty) \neq \gamma(+\infty)$ and $\alpha_n(+\infty) \neq \gamma(-\infty)$. Assume without loss of generality (up to reversing the orientation of $\gamma$) that the points $\gamma(-\infty), \alpha_n(-\infty)$, $\alpha_n(+\infty), \gamma(+\infty)$ are cyclically ordered. 
    
    Let $g$ be the element in $\pi_1(S)$ preserving $\gamma$, and let $[x, gx]$ be a fundamental domain for the action of $g$ in the component of $\partial\HH^2 \setminus \{\gamma^-, \gamma^+\}$ that contains $\tilde{\alpha}_n(-\infty)$ and $\tilde{\alpha}_n(+\infty)$. Observe that $\tilde{\alpha}_n(+\infty)$ and $\tilde{\alpha}_n(-\infty)$ converge to $\gamma^+$ and $\gamma^-$ respectively. Thus, there exists $n$ big enough such that $\tilde{\alpha}_n(+\infty), \tilde{\alpha}_n(-\infty) \notin [x, gx]$. If we take $k \in \NN$ such that $g^k\tilde{\alpha}_n(+\infty) \in [x, gx]$, we will get that $g^k\tilde{\alpha}_n$ intersects $\tilde{\alpha}_n$. Which is a contradiction with the fact that $\tilde{\alpha}$ is simple
\end{proof}

Now we are ready to prove the theorem, which we restate for convenience.

\begin{thm*}[Theorem \ref{teo:triangulos}]
    The set $\mathscr{G}_\Delta$ of geodesics that bound a triangle is typical. Moreover, the image of the complement of $\mathscr{G}_\Delta$ in $S$ has Hausdorff dimension $1$.
\end{thm*}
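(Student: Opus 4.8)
The plan is to show the inclusion $\notboundtri \subseteq \bigcup_{\delta > 0} \boundangle$, which immediately gives that $\mathscr{G}_\Delta^c = \notboundtri$ has Hausdorff dimension $0$ by Theorem \ref{teo:el-wan} and the countable stability of Hausdorff dimension, and then to analyze the image in $S$ separately. So suppose $\alpha$ is a geodesic with $a(\alpha) = 0$, i.e. the self-intersection angles are not uniformly bounded below; I want to produce lifts $\alpha_1,\alpha_2,\alpha_3$ that pairwise intersect, showing $\alpha$ bounds a triangle. The idea is that a sequence of self-intersections with angles tending to $0$ forces, in the limit, a configuration of geodesics in $\overline{\alpha}$ (or in the $\omega$-limit sets) that are ``asymptotically parallel but crossing'', and from this one extracts a triangle. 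More precisely, take $g_n \in \pi_1(S)$ with $\angle_{\alpha \cap g_n\alpha}(\alpha, g_n\alpha) \to 0$; after translating so the intersection points lie in a fixed fundamental domain and passing to a subsequence, the pairs of geodesics $(\alpha, g_n \alpha)$ converge to a pair of geodesics tangent at a point, which is impossible for distinct geodesics in $\HH^2$ unless they coincide — so instead the correct conclusion is that arcs of $\alpha$ become arbitrarily long while staying in a thin tube, i.e. $\overline{\dot\alpha}$ contains a geodesic $\gamma$ that is a limit of long self-returning arcs. By Lemma \ref{lem:a-la-estable}-type reasoning (or Anosov closing) this $\gamma$ may be taken closed or recurrent, and one uses Lemma \ref{lem:hojacerrada} together with the structure theory to pin down $\overline{\dot\alpha}$.

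The cleaner route, and the one I would actually pursue, is the dichotomy: either $\alpha \in \boundangle$ for some $\delta > 0$ (done), or the $\omega$-limit sets $\omega^\pm(\alpha)$ together with $\alpha$ contain geodesics intersecting at arbitrarily small angles, and I claim this forces the closure $\overline{\dot\alpha}$ to contain a \emph{geodesic lamination} $\lambda$ in whose complement $\alpha$ eventually lives. Indeed, by Lemma \ref{lem:continuidad-intersecciones} and a compactness argument, small-angle self-intersections accumulate; passing to the limit the limiting geodesics do not transversally intersect (a transversal intersection is stable under perturbation and would reappear with positive angle), so the limit set $L = \omega^+(\alpha)$ (say) is a union of geodesics pairwise non-transverse, i.e. a geodesic lamination $\lambda$. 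Since $\alpha$ is not eventually simple in general, one has to be a bit careful, but the suggestion of Erlandsson mentioned in the acknowledgments presumably is exactly this: use that $\alpha$ accumulates on a lamination and then invoke Lemma \ref{lem:Casson-Bleiler}. If $\lambda$ has a closed leaf, Lemma \ref{lem:hojacerrada} shows $\alpha$ spirals onto it and the geometry of a neighborhood of a closed geodesic easily produces three pairwise-crossing lifts (take the closed leaf $\gamma$, a lift $\tilde\gamma$, its translate $g\tilde\gamma$ under the hyperbolic element, and a long arc of $\tilde\alpha$ spiraling in — two translates of $\tilde\alpha$ and $\tilde\gamma$ cross pairwise, or three translates of $\tilde\alpha$ do). If $\lambda$ has no closed leaves, then $\alpha$ eventually enters a complementary region $U$ of $\lambda$ which by Lemma \ref{lem:Casson-Bleiler} is an ideal polygon or has crown ends; since $\alpha$ is a complete geodesic confined to (an end of) such a region, and the boundary leaves of an ideal polygon or crown are pairwise asymptotic or crossing in $\HH^2$, one extracts three pairwise-intersecting lifts from the boundary pattern of $U$, or from $\tilde\alpha$ together with two boundary leaves.

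For the statement about the image in $S$: as in the proof of Theorem \ref{teo:el-wan}, $\mathrm{Im}\,\mathscr{G}_\Delta^c \subseteq S$ contains geodesic arcs so has dimension $\ge 1$, and for the reverse inequality one lifts the image in $T^1S$ to $T^1\HH^2$, uses the Hopf parametrization $h: T^1\HH^2 \to \mathscr{G} \times \RR^2$, and the bound $\hffdim \tilde A \le \hffdim \mathscr{A} + 1 = 1$ where $\mathscr{A}$ is the corresponding subset of $\mathscr{G}$, which has dimension $0$ by the first part; then project back down via the Lipschitz map $T^1S \to S$.

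The main obstacle I expect is making rigorous the passage ``small-angle self-intersections $\Rightarrow$ $\alpha$ accumulates on / eventually lies in the complement of a genuine geodesic lamination,'' and in particular handling the case where $\alpha$ genuinely has infinitely many transverse self-intersections but with angles going to $0$ (as in Example \ref{ex:boundangle}, which \emph{does} bound triangles): one must argue that the \emph{limiting} configuration is a lamination even though $\alpha$ itself is far from simple, and then correctly identify which three lifts cross. The crown/ideal-polygon case analysis, turning the combinatorics of $\partial U$ into an explicit pairwise-crossing triple of lifts, is the delicate computational heart of the argument.
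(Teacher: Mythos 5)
Your overall strategy—prove $\notboundtri \subseteq \bigcup_{\delta>0}\boundangle$, conclude via Theorem \ref{teo:el-wan} and countable stability, and handle the image in $S$ by the Hopf-parametrization argument—is the paper's strategy, and the last part of your write-up (image in $S$) is correct. But the proof of the inclusion has genuine gaps that the paper resolves differently, and the central reason is that the paper runs the argument as a \emph{reductio} using both hypotheses (``$a(\alpha)=0$'' \emph{and} ``$\alpha$ does not bound a triangle'') simultaneously, whereas you try to argue constructively from ``$a(\alpha)=0$'' alone. The ``no triangle'' hypothesis is not a luxury: it is the engine of the whole argument. Concretely, the paper first proves (Claim 1) that the limiting geodesic $\gamma$ through the accumulation point $\dot p$ of the small-angle intersections is \emph{simple and disjoint from $\alpha$}, and the proof of this is precisely that a violation would give three pairwise-crossing lifts of $\alpha$ — i.e.\ it is here that ``no triangle'' is used. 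Without this step you have no access to a lamination.

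Your shortcut claim that $\omega^+(\alpha)$ is a geodesic lamination because ``a transversal intersection is stable and would reappear with positive angle'' does not work: $a(\alpha)=0$ only says the infimum of self-intersection angles is $0$, not that all angles become small, so $\omega^+(\alpha)$ can (and generically does) contain lots of transverse intersections — for a dense geodesic it is all of $T^1S$. The paper does not claim $\omega^+(\alpha)$ is a lamination; it takes $\lambda = \overline\gamma$ and uses Claim 1 (hence ``no triangle'') to show $\lambda$ is a lamination \emph{disjoint from $\alpha$}, so that $\alpha$ genuinely sits in a complementary component $U$ of $\lambda$. Your ``eventually lives in the complement'' is also unsupported: for a recurrent $\alpha$ one has $\alpha \subseteq \omega^+(\alpha)$.

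Finally, in the no-closed-leaf case your plan to ``extract three pairwise-intersecting lifts from the boundary pattern of $U$, or from $\tilde\alpha$ together with two boundary leaves'' cannot work as stated, because bounding a triangle requires three lifts \emph{of $\alpha$}, not of boundary leaves of $\lambda$ (which are not $\pi_1(S)$-translates of $\tilde\alpha$). The paper does not produce a triangle here at all; it uses Lemma \ref{lem:Casson-Bleiler} to conclude that $\alpha$, trapped in a crown or ideal polygon and barred from crossing $\lambda$ or re-crossing the crown's boundary geodesic, must run to an ideal vertex in both directions, and is therefore asymptotically simple — contradicting that $a(\alpha)=0$ forces infinitely many self-intersections. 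Similarly, Lemma \ref{lem:hojacerrada} is applied in the paper to the \emph{simple} geodesic $\gamma$ (whose simplicity comes from Claim 1), not to $\alpha$ (which is far from simple), so your ``Lemma \ref{lem:hojacerrada} shows $\alpha$ spirals'' does not type-check. You do flag these gaps yourself at the end — but the missing idea is precisely to carry the ``no triangle'' hypothesis as a working assumption, establish disjointness of $\alpha$ from $\lambda$ (the paper's Claim 1), and in the final case aim for a contradiction with having infinitely many self-intersections rather than for an explicit triangle.
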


\begin{proof}[Proof of Theorem \ref{teo:triangulos}]
    The strategy is to prove, by contradiction, that $\notboundtri \subseteq \bigcup_{\delta > 0} \boundangle$. By monotonicity of the Hausdorff dimension, this proves both statements of the theorem.

    Let $\alpha \in \notboundtri$. Suppose that there is no $\delta > 0$ such that $\alpha \in \boundangle$. This means that either:
    \begin{enumerate}
        \item\label{item:angulo-0} There exist sequences $t_n, s_n$ in $\RR$ such that $\alpha(t_n) = \alpha(s_n)$ and $\angle (\dot{\alpha}(t_n), \dot{\alpha}(s_n)) \to 0$.
        \item\label{item:angulo-pi} There exist sequences $t_n, s_n$ in $\RR$ such that $\alpha(t_n) = \alpha(s_n)$ and $\angle (\dot{\alpha}(t_n), \dot{\alpha}(s_n)) \to \pi$.
    \end{enumerate}


We will treat both cases analogously. Because $S$ is compact, up to taking a subsequence we may assume that $\dot{\alpha}(t_n)$ converges to a point $\dot{p} = (p, v) \in T^1S$. Observe that this also means that $\dot{\alpha}(s_n)$ converges to $\dot{p}$ in the first case and that $\dot{\alpha}(s_n)$ converges to $(p, -v)$ in the second case.

\begin{claim}
    The geodesic through $\dot{p}$ (which we call $\gamma$) is simple and does not intersect $\alpha$.
\end{claim}
\begin{proof}[Proof of Claim 1]
In order to prove this claim, we go to the universal cover. Take lifts $\dot{p}_n, \dot{q}_n$ and $\tilde{\gamma}$ of $\dot{\alpha}(t_n)$, $\dot{\alpha}(s_n)$ and $\gamma$ respectively. For each $\dot{p} \in T^1\HH^2$, we denote by $\alpha_{\dot{p}}$ the geodesic tangent to $\dot{p}$.

Assume by contradiction that $\alpha$ intersects $\gamma$, then there exists a lift $\tilde{\alpha}$ of $\alpha$ that intersects $\tilde{\gamma}$. Observe that $\alpha_{\dot{p}_n}(+\infty)$ converges to $\tilde{\gamma}(+\infty)$ and $\alpha_{\dot{p}_n}(-\infty)$ converges to $\tilde{\gamma}(-\infty)$. Because two geodesics intersecting is an open property in the set of pairs of points in the boundary, we get that for $n$ big enough $\alpha_{\dot{q}_n}$ and $\alpha_{\dot{p}_n}$ both will intersect $\tilde{\alpha}$, which is a contradiction with the fact that $\alpha$ does not bound a triangle. A similar argument shows that $\gamma$ must be simple, which proves the claim.
\end{proof}


Now let $\lambda = \overline{\gamma}$. Because of Claim 1, we have that $\lambda$ is a lamination. Moreover, a similar argument to the proof of Claim 1 shows $\alpha$ cannot intersect any recurrent leaf of $\lambda$.

\begin{claim}
    $\lambda$ cannot contain a closed leaf. 
\end{claim} 
\begin{proof}[Proof of Claim 2]
See Figure \ref{fig:argumento-no-triangulos} for a visual representation of the argument. Suppose by contradiction that $\lambda$ contains a closed leaf $\beta$. We assume $\beta \neq \gamma$ (a simpler but similar argument works when $\beta = \gamma$). Because of Lemma \ref{lem:hojacerrada}, up to reversing the orientation of $\gamma$, we may assume that there exists a lift $\tilde{\beta}$ of $\beta$ such that $\tilde{\gamma}(+\infty) = \tilde{\beta}(+\infty)$. Denote by $g \in \pi_1(S)$ the element that preserves $\tilde{\beta}$.  Since $\tilde{\beta}$ cannot intersect any lift of $\alpha$, it is true that $\tilde{\beta}(-\infty)$ is in the component of $\partial\HH^2 \setminus \{\tilde{\gamma}(+\infty), \tilde{\gamma}(-\infty)\}$ that does not contain the endpoints of $\alpha_{\dot{p}_n}$. Observe that $\alpha_{\dot{p}_n}(+\infty)$, $\alpha_{\dot{p}_n}(-\infty)$ converge to $\tilde{\gamma}(+\infty) = \tilde{\beta}(+\infty)$ and $\tilde{\gamma}(-\infty)$ respectively. 

Because $\pi_1(S)$ acts continuously in the boundary, it is true that $g^{-1}\alpha_{\dot{p}_n}(-\infty)$ converges to $g^{-1}\tilde{\gamma}(-\infty)$, while $g^{-1}\alpha_{\dot{p}_n}(+\infty)$ converges to $\tilde{\beta}(+\infty)$. This implies that for $n$ big enough, these two points are in different components of $\partial\HH^2 \setminus \{\tilde{\gamma}(+\infty), \tilde{\gamma}(-\infty)\}$, which contradicts the fact that $\gamma$ and $\alpha$ do not intersect.

\begin{figure}
    \centering
    \def\svgwidth{0.8\columnwidth}
    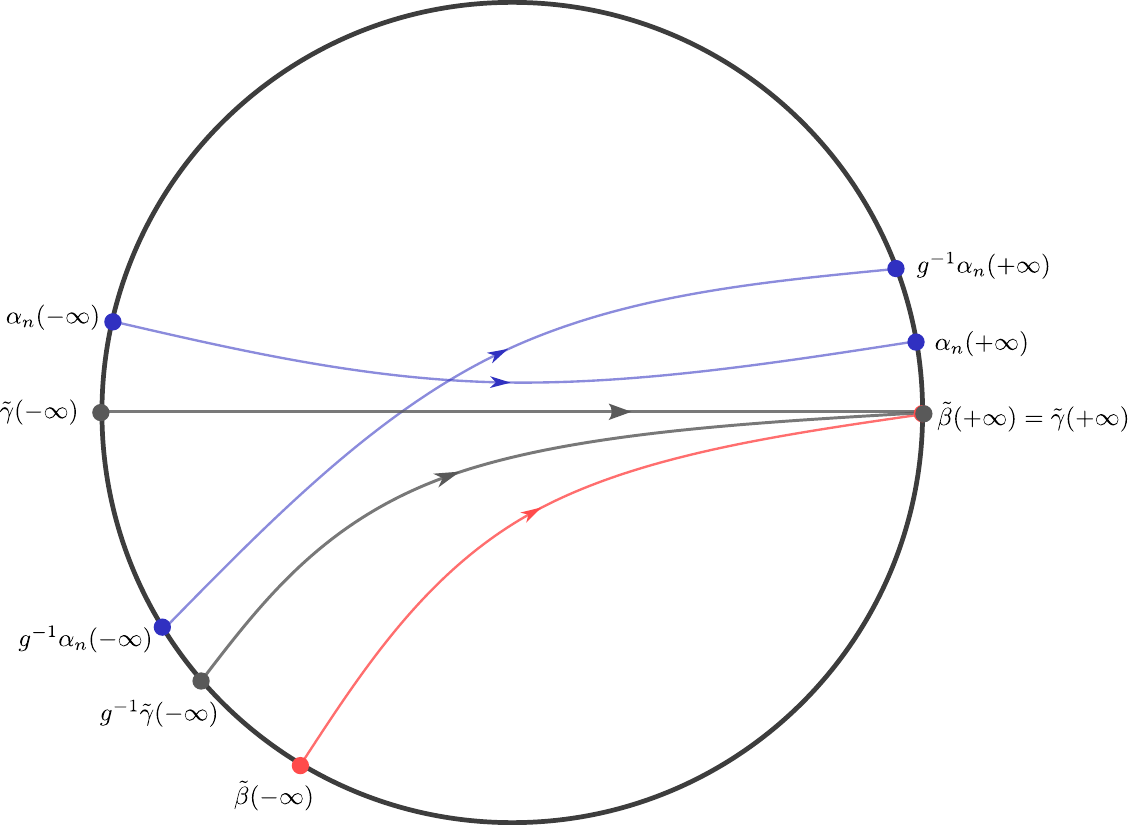
    \caption{A visualization of the main argument of Case 2 in the proof of Theorem \ref{teo:triangulos}.}
    \label{fig:argumento-no-triangulos}
  \end{figure}


\end{proof}
Because of Claims 1 and 2, we know that $\alpha$ lies in a connected component $U$ of $S \setminus \lambda$, where $\lambda$ is a lamination without closed leaves. Because of Lemma \ref{lem:Casson-Bleiler} we have that either $U$ is a finitely sided polygon with ideal vertices, or there is a unique compact surface with geodesic boundary $U_0$ such that $U \setminus U_0$ is a finite union of crowns. The case where $U$ is a finitely sided polygon is trivial because complete geodesics contained in polygons must be simple. So we assume $U$ is of the latter form.

The compact surface $U_0$ lies at positive distance of the lamination, so there exists $t > 0$ and a crown $C$ bounded by a closed geodesic $\beta$ in $U$ such that $\alpha(t) \in C$. Now there are two possibilities, either $\alpha(t) \in C$ for every $t \in \RR$ or there exists $s \in \RR$ such that $\alpha(s) \in \beta$. The first case is impossible, because a geodesic contained in $C$ cannot have an infinite number of self-intersections. Now assume the second condition is not true, and assume without loss of generality that $s < t$ (the other case is analogous by reversing the orientation of $\alpha$).

Observe that, again by the geometry of the crown, there cannot exist $t' \in \RR$ with $t > t'$ such that $\alpha(t') \in \beta$ (i.e. the geodesic cannot exit $C$ through $\beta$). Because it also cannot intersect $\lambda$, this means that $\alpha$ must end at an ideal point of the crown. An analogous analysis shows that $\alpha$ must start at an ideal point of some crown, and both of these facts are in contradiction with the fact that $\alpha$ has an infinite number of self-intersections.

The proof of the second statement is analogous to that of Theorem \ref{teo:el-wan}.

\end{proof}

\bibliographystyle{alpha}   
\bibliography{references}   

\end{document}